\newtheorem{thm}{Theorem}[section]
\newtheorem*{thm1}{Theorem}
\newtheorem{prop}[thm]{Proposition}
\newtheorem{conj}[thm]{Conjecture}
\newtheorem{cor}[thm]{Corollary}
\newtheorem{lem}[thm]{Lemma}
\theoremstyle{definition}
\newtheorem{rem}[thm]{Remark}
\newtheorem{def1}[thm]{Definition}
\newcommand{\ra}{\rightarrow}
\newcommand{\bk}{\backslash}
\newcommand{\mc}{\mathcal}
\newcommand{\mb}{\mathbb}
\newcommand{\sg}{\sigma}
\newcommand{\eps}{\epsilon}
\renewcommand{\ss}{\substack}
\newcommand{\llf}{\left\lfloor}
\newcommand{\e}{\varepsilon}
\newcommand{\rrf}{\right\rfloor}
\newcommand{\mbf}{\boldsymbol}
\newcommand{\asum}{\sideset{}{^{\ast}}\sum}
\renewcommand{\bar}{\overline}
\begin{document}
\title{Additive Functions in Short Intervals, Gaps and a Conjecture of Erd\H{o}s}
\author{Alexander P. Mangerel}
\address{Centre de Recherches Math\'{e}matiques, Universit\'{e} de Montr\'{e}al, Montr\'{e}al, Qu\'{e}bec}
\email{smangerel@gmail.com}

\begin{abstract}
With the aim of treating the local behaviour of additive functions, we develop analogues of the Matom\"{a}ki-Radziwi\l\l{} theorem that allow us to approximate  the average of a general additive function over a typical short interval in terms of a corresponding long average. As part of this treatment, we use a variant of the Matom\"{a}ki-Radziwi\l\l \ theorem for divisor-bounded multiplicative functions recently proven in \cite{ManMR}.\\
We consider two sets of applications of these methods. Our first application shows that for an additive function $g: \mb{N} \ra \mb{C}$ any non-trivial savings in the size of the average gap $|g(n)-g(n-1)|$ implies that $g$ must have a small first centred moment, i.e., the discrepancy of $g(n)$ from its mean is small on average. We also obtain a variant of such a result for the second moment of the gaps. This complements results of Elliott and of Hildebrand.\\
As a second application, we make partial progress on an old question of Erd\H{o}s relating to characterizing constant multiples of $\log n$ as the only \emph{almost everywhere} increasing additive functions. We show that if an additive function is almost everywhere non-decreasing then it is almost everywhere \emph{well-approximated} by a constant times a logarithm. We also show that if the set $\{n \in \mb{N} : g(n) < g(n-1)\}$ is sufficiently sparse, and if $g$ is not extremely large too often on the primes (in a precise sense), then $g$ is identically equal to a constant times a logarithm.
% for some $c \in \mb{R}$.
\end{abstract}

\maketitle
\section{Introduction}
An arithmetic function $g: \mb{N} \ra \mb{C}$ is called \emph{additive} if, whenever $n,m \in \mb{N}$ are coprime, $g(nm) = g(n)+g(m)$; it is said to be \emph{completely additive} if the coprimality condition on $n,m$ can be ignored. Additive functions are objects of classical study in analytic and probabilistic number theory, given their close relationship with the theory of random walks. 

Much is understood about the \emph{global} behaviour of general additive functions. For instance, the orders of magnitude of all of the centred moments 
$$
\frac{1}{X}\sum_{n \leq X} \left|g(n)-\frac{1}{X}\sum_{n \leq X} g(n)\right|^k, \quad k >0
$$
have been computed by Hildebrand \cite{HilMom}. When $k = 2$, the slightly weaker but generally sharp Tur\'{a}n-Kubilius inequality gives an upper bound, uniform in $g$, of the form
\begin{equation}\label{eq:TKIntro}
\frac{1}{X}\sum_{n \leq X} \left|g(n)-\frac{1}{X}\sum_{n \leq X} g(n)\right|^2 \ll B_g(X)^2,
\end{equation}
where we have denoted by $B_g(X)$ the approximate variance
$$
B_g(X) := \left(\sum_{p^k \leq X} \frac{|g(p^k)|^2}{p^k}\right)^{1/2}.
$$
%This implies that for typical integers $n \in [1,X]$, $g(n)$ clusters relatively close to its mean value in a manner controlled by the prime-power sum $B_g(X)$. 
When $g$ is real-valued one can determine necessary and sufficient conditions according to which the distribution functions $F_X(z) := \frac{1}{X}|\{n \leq X : g(n) \leq z\}|$ converge at continuity points $z \in \mb{R}$; this is the content of the Erd\H{o}s-Wintner theorem. Under certain conditions the corresponding distribution functions (with suitable normalizations) converge to a Gaussian, a fundamental result of Erd\H{o}s and Kac.

Much less is understood regarding the \emph{local} behaviour of additive functions, i.e., the simultaneous behaviour of $g$ at neighbouring integers. Questions of interest from this perspective include: 
\begin{enumerate}[(i)]
\item the distribution of $\{g(n)\}_n$ in \emph{typical} short intervals $[x,x+H]$, where $H = H(X)$ grows slowly,
\item the distribution of the sequence of gaps $|g(n)-g(n-1)|$ between consecutive values, and
\item the distribution of tuples $(g(n+1),\ldots,g(n+k))$, for $k \geq 2$.
\end{enumerate}
Pervasive within this scope are questions surrounding the characterization of those additive functions $g$ whose local behaviour is rigid in some sense; such questions are discussed below, in Section \ref{subsec:apps}. \\
The purpose of this paper is to consider questions of a local nature about general additive functions.
\subsection{Matom\"{a}ki-Radziwi\l\l \ type theorems for additive functions}
%To understand the local behaviour of additive functions, we will analyze the behaviour of typical short interval sums of additive functions. \\
The study of additive functions is intimately connected with that of multiplicative functions, i.e., arithmetic functions $f: \mb{N} \ra \mb{C}$ such that $f(nm) = f(n)f(m)$ whenever $(n,m) = 1$.
The mean-value theory of bounded multiplicative functions, which provides tools for the analysis of the global behaviour of multiplicative functions, was developed in the '60s and '70s in the seminal works of Wirsing \cite{WirMV} and Hal\'{a}sz \cite{Hal}. \\
In contrast, the study of the local behaviour of multiplicative functions has long been the source of intractable problems. An important example of this is Chowla's conjecture \cite{Cho}, which generalizes the prime number theorem. This conjecture states, among other things, that for any $k \geq 2$ and any tuple $\mbf{\eps} \in \{-1,+1\}^k$, the set
$$
\{n \leq X: \lambda(n+1) = \eps_1,\ldots,\lambda(n+k) = \eps_k\}
$$
has $(2^{-k}+o(1)) X$ elements, where $\lambda$ is the Liouville\footnote{The Liouville function is the multiplicative function defined as $\lambda(n) := (-1)^{\Omega(n)}$, where $\Omega(n)$ is the number of prime factors of $n$, counted with multiplicity.} function. In other terms, the sequence  of tuples $(\lambda(n+1),\ldots,\lambda(n+k))$ equidistributes among the tuples of signs in $\{-1,+1\}^k$. 

Problems of this type have recently garnered significant interest, thanks crucially to the celebrated theorems of Matom\"{a}ki and Radziwi\l\l \ \cite{MR}. Broadly speaking, their results show that averages of a bounded multiplicative function in typical short intervals are well-approximated by a corresponding long average. In a strong sense, this suggests that the local behaviour of many multiplicative functions is determined by their global behaviour. The simplest version to state is as follows.
\begin{thm1}[Matom\"{a}ki-Radziwi\l\l \ \cite{MR}]
Let $f: \mb{N} \ra [-1,1]$ be multiplicative. Let $10 \leq h \leq X/100$. Then
$$
\frac{2}{X}\sum_{X/2< n \leq X} \left|\frac{1}{h} \sum_{n-h < m \leq n} f(m) - \frac{2}{X}\sum_{X/2 < m \leq X} f(m)\right|^2 \ll \frac{\log\log h}{\log h} + (\log X)^{-1/50}.
$$
\end{thm1}
This result, its natural extensions to complex-valued functions \cite{MRT}, and further improvements, extensions and variants (e.g., \cite{MRII}) have had profound impacts not only in analytic number theory, but equally in combinatorics and dynamics. For instance, Tao \cite{Tao} used this result to develop technology in order to obtain estimates for the logarithmically-averaged binary correlation sums
$$
\frac{1}{\log X}\sum_{n \leq X} \frac{f(n)f(n+h)}{n}, \text{ for multiplicative functions } f: \mb{N} \ra \mb{C}, |f(n)| \leq 1.
$$
This was essential in his proof of the Erd\H{o}s discrepancy problem \cite{TaoEDP}, and also enabled him to obtain a logarithmic density analogue of the case $k = 2$ of Chowla's conjecture. It has also been pivotal in the various developments towards Sarnak's conjecture on the disjointness of the Liouville function from zero entropy dynamical systems (see \cite{SarSur} for a survey).\\
Our first main result establishes an $\ell^1$-averaged comparison theorem for short and long averages of additive functions, inspired by the theorem of Matom\"{a}ki and Radziwi\l\l.
\begin{thm}\label{thm:MRL1}
Let $g: \mb{N} \ra \mb{C}$ be an additive function. Let $10 \leq h \leq X/100$ be an integer\footnote{The requirement that $h$ be an integer is possibly unnecessary, but assuming it allows us to avoid certain pathologies associated with functions $g$ taking very large values.}. Then
$$
\frac{2}{X}\sum_{X/2 < n \leq X} \left|\frac{1}{h} \sum_{n-h < m \leq n} g(m) - \frac{2}{X}\sum_{X/2 < m \leq X} g(m) \right| \ll \left(\sqrt{\frac{\log\log h}{\log h}} + (\log X)^{-1/800}\right)B_g(X).
$$
\end{thm}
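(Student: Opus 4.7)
The plan is to reduce Theorem~\ref{thm:MRL1} to an $\ell^2$-type variance estimate via Cauchy–Schwarz, then to linearize $g$ through the Tur\'{a}n–Kubilius identity and reduce matters to short-interval variance bounds for divisibility indicators $\mathbf{1}[p^a\|\cdot]$. These indicators can be written as differences $\mathbf{1}[p^a\,|\,\cdot] - \mathbf{1}[p^{a+1}\,|\,\cdot]$ of divisor-bounded multiplicative-type functions, so the Matom\"{a}ki–Radziwi\l\l{} theorem for divisor-bounded multiplicative functions from \cite{ManMR} should apply to each of them.

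More concretely, set $c_{p^a} := (p-1)/p^{a+1}$, let $A_g := \sum_{p^a \leq X} g(p^a) c_{p^a}$, and write, for $n \leq X$,
\[
g(n) - A_g \;=\; \sum_{p^a \leq X} g(p^a)\bigl(\mathbf{1}[p^a\|n] - c_{p^a}\bigr).
\]
A standard application of \eqref{eq:TKIntro} to $g - A_g$ on long intervals shows that $|M(X) - A_g| \ll B_g(X)(\log X)^{-c}$ for some $c>0$, so we may replace the long mean in the theorem by $A_g$. By the inequality $\ell^1 \leq \ell^2$, it then suffices to prove the stronger $\ell^2$ bound
\[
\frac{2}{X}\sum_{X/2 < n \leq X} \Bigl|\frac{1}{h}\sum_{n-h<m\leq n}(g(m) - A_g)\Bigr|^2 \;\ll\; \Bigl(\frac{\log\log h}{\log h} + (\log X)^{-1/400}\Bigr) B_g(X)^2.
\]
Expanding the square using the linearization above produces the double sum $\sum_{p^a,q^b} g(p^a)\overline{g(q^b)} C_{p^a,q^b}$, where $C_{p^a,q^b}$ denotes the $n$-covariance of $R_{p^a}(n) := \frac{1}{h}\sum_{n-h<m\leq n}(\mathbf{1}[p^a\|m] - c_{p^a})$ with $R_{q^b}$. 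For the diagonal $p^a=q^b$ one faces the short-interval variance $V_{p^a} := C_{p^a,p^a}$ of the indicator $\mathbf{1}[p^a\|\cdot]$; for the off-diagonal $p \neq q$ one faces cross-correlations which, by the Chinese Remainder Theorem, vanish up to tolerable boundary terms.

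To handle the diagonal, split over dyadic ranges of $p^a$. For $p^a \leq h$, an elementary short-interval count of multiples of $p^a$ gives $V_{p^a} \ll 1/h^2$, and summation against the appropriate Tur\'{a}n–Kubilius weights is harmless. For $h < p^a \leq X$, the indicator $\mathbf{1}[p^a\|\cdot] = \mathbf{1}[p^a|\cdot]-\mathbf{1}[p^{a+1}|\cdot]$ is a bounded multiplicative-type object to which we invoke the divisor-bounded Matom\"{a}ki–Radziwi\l\l{} theorem from \cite{ManMR}, obtaining a variance bound of the form
\[
V_{p^a} \;\ll\; \Bigl(\frac{\log\log h}{\log h} + (\log X)^{-1/400}\Bigr)\cdot c_{p^a},
\]
where the factor $c_{p^a}$ (rather than the trivial $\ell^\infty$-bound $1$) reflects the mean-square size of the indicator. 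Weighted summation against $|g(p^a)|^2$ then yields the desired diagonal bound using $\sum_{p^a}|g(p^a)|^2/p^a = B_g(X)^2$, while the off-diagonal contributions are absorbed similarly using the coprimality-driven near-orthogonality.

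The main obstacle is orchestrating the application of the \cite{ManMR} theorem uniformly across all prime powers $p^a \in (h,X]$, and ensuring that the variance bound scales with $c_{p^a}$ rather than the trivial constant $1$ — without such mean-dependent scaling, the sum $\sum_{p^a} |g(p^a)|^2 V_{p^a}$ would exceed the target by a factor polynomial in $X/h$. Achieving this requires invoking the \cite{ManMR} result in its sharpest form, where the right-hand side features an $\ell^2$-type norm of the multiplicative function rather than an $\ell^\infty$ one, and carrying out the dyadic decomposition and summation over $p^a$ with careful bookkeeping. The weakened exponent $1/800$ in the error term, compared with the $1/50$ of the original Matom\"{a}ki–Radziwi\l\l{} theorem, should arise from combining the quadratic conversion $\ell^2\to\ell^1$ (losing a square root) with further polynomial losses absorbed during the divisor-bounded passage in \cite{ManMR}.
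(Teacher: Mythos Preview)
Your strategy departs substantially from the paper's, and contains a genuine gap.

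The paper does \emph{not} pass through an $\ell^2$ estimate. In fact it explicitly notes (see the discussion preceding Theorem~\ref{thm:MRL2}) that the $\ell^2$ analogue of Theorem~\ref{thm:MRL1} is only established under the additional hypothesis $g\in\mc{A}_s$, precisely because primes $p$ with $|g(p)|\gg B_g(X)$ can dominate the second moment. Your claim that ``it suffices to prove the stronger $\ell^2$ bound'' for \emph{arbitrary} additive $g$ therefore asserts something strictly beyond Theorem~\ref{thm:MRL1}; if your outline were correct it would already supersede Theorem~\ref{thm:MRL2}.

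More concretely, your invocation of the divisor-bounded Matom\"{a}ki--Radziwi\l\l{} theorem for the indicators $\mathbf{1}[p^a\mid\cdot]$ is a category error: these functions are not multiplicative (for distinct primes $p,q$ one has $\mathbf{1}[p\mid p]\cdot\mathbf{1}[p\mid q]=0\neq 1=\mathbf{1}[p\mid pq]$), and in any case they are periodic with period $p^{a+1}$, so their short-interval variance is elementary, $V_{p^a}\ll c_{p^a}/h$, with no need for \cite{ManMR}. The diagonal $\sum_{p^a}|g(p^a)|^2 V_{p^a}\ll B_g(X)^2/h$ is then harmless; the entire weight of the argument falls on the off-diagonal $C_{p^a,q^b}$. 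Your appeal to ``CRT-driven near-orthogonality'' hides the real difficulty: unwinding the square, one needs the shifted autocorrelations $\frac{1}{X}\sum_{m\leq X}(g(m)-A_g)\overline{(g(m+j)-A_g)}$ to be $o(B_g(X)^2)$ uniformly in $0<|j|\leq h$, and summing the CRT boundary errors over all pairs $(p^a,q^b)$---including those with $p^{a+1}q^{b+1}>X$, where the residue-class count has no main term---does not obviously converge. This is essentially the problem of bounding shifted correlations of a general additive function, which is not elementary and is not what \cite{ManMR} provides.

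The paper's route avoids all of this. It forms the unimodular \emph{multiplicative} function $G_{t,X}(n):=e(tg(n)/B_g(X))$ for a small real parameter $t$, and observes via first-order Taylor expansion (Lemma~\ref{lem:passtoMR}) that the short-versus-long discrepancy for $g$ equals $t^{-1}B_g(X)$ times that for $G_{t,X}$, up to a second-order remainder of size $tB_g(X)$ controlled by the Tur\'{a}n--Kubilius inequality. One then applies the \emph{bounded} Matom\"{a}ki--Radziwi\l\l{} machinery directly to $G_{t,X}$ (Lemmas~\ref{lem:controlMT} and~\ref{lem:MRapp}) and optimises in $t$; the exponent $1/800$ arises from this optimisation, not from any divisor-bounded passage. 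The reference \cite{ManMR} is used only later, in the proof of the conditional $\ell^2$ Theorem~\ref{thm:MRL2}, where the relevant multiplicative function is $z^{g(n)/B_g(X)}$ on a small circle $|z-1|=r$---which is genuinely unbounded and requires the truncation to $\mc{A}_s$.
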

\begin{rem}
Theorem \ref{thm:MRL1} should be compared to the ``trivial bound'' arising from applying the triangle inequality, the Cauchy-Schwarz inequality and \eqref{eq:TKIntro} (which is valid for dyadic long averages as well) to obtain
\begin{align*}
&\frac{2}{X}\sum_{X/2 < n \leq X} \left|\frac{1}{h} \sum_{n-h < m \leq n} g(m) - \frac{2}{X}\sum_{X/2 < m \leq X} g(m) \right| \\
&\leq \frac{2}{X}\sum_{X/2-h < m \leq X} \left|g(m) - \frac{2}{X}\sum_{X/2 < n \leq X} g(n)\right| \leq \left(1+\frac{2h}{X}\right)^{1/2}\left(\frac{2}{X}\sum_{X/2-h < m \leq X}\left|g(m)-\frac{2}{X}\sum_{X/2 < n \leq X} g(n)\right|^2\right)^{\frac{1}{2}} \\
&\ll B_g(X).
\end{align*}
Thus, Theorem \ref{thm:MRL1} obtains an improvement over this trivial bound that tends to 0 whenever $h \ra \infty$ as $X \ra \infty$.
\end{rem}
To get a more precise additive function analogue of the Matom\"{a}ki-Radziwi\l\l \ theorem, one would hope to obtain a mean-square (or $\ell^2$) version of Theorem \ref{thm:MRL1}. We are limited in this matter by the possibility of very large values of $g$. Specifically, if $g(p)/B_g(X)$ can get very large for many primes $p \leq X$, it is possible for the $\ell^2$ average to be dominated by a sparse set (i.e., the multiples of these $p$), wherein the discrepancy between the long and short sums is not small. We will thus work with a specific collection of additive functions in order to preclude such pathological behaviour.\\
To describe this collection we introduce the following notations. Given $\e  > 0$ and an additive function $g$, we define\footnote{It is obvious that $\sum_{\ss{p \leq X \\ |g(p)| > \e^{-1}B_g(X)}} p^{-1} \ll \e^2$, and thus the proportion of integers divisible by such a prime is sparse, namely of size $O(\e^2 X)$. Nevertheless, if $F_g(\e) \gg 1$ for all $\e > 0$ the values $g(n)^2$ at multiples of the primes with $|g(p)| > \e^{-1} B_g(X)$ can have an outsized influence on the second moment.}
$$
F_g(\e) := \limsup_{X \ra \infty} \frac{1}{B_g(X)^2} \sum_{\substack{p \leq X \\ |g(p)| > \e^{-1} B_g(X)}} \frac{|g(p)|^2}{p}.
$$
Roughly speaking, $F_g(\e)$ measures the contribution to $B_g(X)^2$ from prime values $g(p)$ of very large absolute value. \\
Clearly, $0 \leq F_g(\e) \leq 1$ for all $\e > 0$ and additive functions $g$. We will concern ourselves with functions $g$ such that $F_g(\e) \ra 0$ as $\e \ra 0^+$, a condition that is satisfied by many additive functions. When $g$ is bounded on the primes e.g., when $g(n) = \Omega(n)$, the number of prime factors of $n$ counted with multiplicity, it is clear that $F_g(\e) = 0$ whenever $\e$ is sufficiently small. For a different example, taking $g = c\log$ for some $c \in \mb{C}$ we find $B_g(X) \sim \frac{|c|}{\sqrt{2}}\log X$, so that $|g(p)| \leq (\sqrt{2} + o(1)) B_g(X)$ for all primes $p$ and hence $F_g(\e) = 0$ for all $\e < 1/2$, say. 
\begin{def1}
We define the collection $\mc{A}$ to be the set of those additive functions $g : \mb{N} \ra \mb{C}$ such that
\begin{enumerate}[(a)]
\item $B_g(X) \ra \infty$, and 
\item $B_g(X)$ is dominated by the prime values $|g(p)|$, in the sense that 
$$
\limsup_{X \ra \infty} \frac{1}{B_g(X)^2} \sum_{\ss{p^k \leq X \\ k \geq 2}} \frac{|g(p^k)|^2}{p^k} = 0.
$$
\end{enumerate}
\end{def1}
We shall see below (see Lemma \ref{lem:redtoSA}a)) that $\mc{A}$ contains all completely additive and all strongly additive\footnote{By a \emph{strongly additive} function we mean an additive function $g$ such that $g(p^k) = g(p)$ for all primes $p$ and all $k \geq 1$.} functions $g$ with $B_g(X) \ra \infty$. Within $\mc{A}$ we define
$$
\mc{A}_s := \{g \in \mc{A} : \lim_{\e \ra 0^+} F_g(\e) = 0\}.
$$
Thus, among other examples, $\Omega(n), \omega(n) := \sum_{p|n} 1$ and, for any $c \in \mb{C}$, $c\log$ all belong to $\mc{A}_s$.  We show in general that whenever $g \in \mc{A}_s$, we may obtain an $\ell^2$ analogue of Theorem \ref{thm:MRL1}.
\begin{thm}\label{thm:MRL2}
Let $g: \mb{N} \ra \mb{C}$ be an additive function in $\mc{A}_s$. Let $10 \leq h \leq X/100$ be an integer with $h = h(X) \ra \infty$. Then
$$
\frac{2}{X}\sum_{X/2 < n \leq X} \left|\frac{1}{h} \sum_{n-h < m \leq n} g(m) - \frac{2}{X}\sum_{X/2 < m \leq X} g(m) \right|^2 = o(B_g(X)^2).
$$
\end{thm}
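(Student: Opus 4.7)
Fix $\e = \e(X) \to 0^+$ (to be chosen) and decompose $g = g_\e + g^\e$ by the size of $g$ on primes relative to the threshold $\e^{-1}B_g(X)$: set $g_\e(p) := g(p)\mathbf{1}_{|g(p)|\leq \e^{-1}B_g(X)}$ and $g_\e(p^k) := 0$ for $k\geq 2$, and put $g^\e := g - g_\e$. Let $D_f(n) := \frac{1}{h}\sum_{n-h<m\leq n} f(m) - \frac{2}{X}\sum_{X/2<m\leq X} f(m)$, and write $\Sigma_f := \frac{2}{X}\sum_{X/2<n\leq X} |D_f(n)|^2$. By linearity and $|a+b|^2 \leq 2(|a|^2+|b|^2)$, we have $\Sigma_g \leq 2\Sigma_{g_\e} + 2\Sigma_{g^\e}$, and it suffices to show each summand is $o(B_g(X)^2)$.

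The $g^\e$-term is handled by the ``trivial bound'': applying Cauchy--Schwarz to the short average gives $|D_{g^\e}(n)|^2 \leq \frac{1}{h}\sum_{n-h<m\leq n}|g^\e(m) - A_{g^\e}|^2$ (where $A_{g^\e}$ is the long mean), and summing in $n$ plus the Tur\'{a}n--Kubilius inequality \eqref{eq:TKIntro} yields $\Sigma_{g^\e} \ll B_{g^\e}(X)^2$. By condition (b) of $\mc{A}$ the prime-power ($k\geq 2$) contribution to $B_{g^\e}(X)^2$ is $o(B_g(X)^2)$, while the prime part is $\leq (F_g(\e) + o_\e(1))B_g(X)^2$ by definition of $F_g$; since $g \in \mc{A}_s$ gives $F_g(\e)\to 0$, this is $o(B_g(X)^2)$ for $\e=\e(X)\to 0$ chosen to decay slowly enough.

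The $g_\e$-term is the main step. Since $|g_\e(p)| \leq \e^{-1}B_g(X)$ for all primes, Theorem \ref{thm:MRL1} applied to $g_\e$ yields the $\ell^1$ bound
\[
\frac{2}{X}\sum_{X/2<n\leq X} |D_{g_\e}(n)| \ll \delta(h,X)B_g(X), \qquad \delta(h,X) := \sqrt{\tfrac{\log\log h}{\log h}} + (\log X)^{-1/800}.
\]
To upgrade this $\ell^1$ estimate to the sought $\ell^2$ estimate, I introduce a threshold $C>0$ and split according to whether $|D_{g_\e}(n)|\leq CB_g(X)$. On the ``good'' set, $|D_{g_\e}|^2 \leq CB_g(X)|D_{g_\e}|$ combined with the $\ell^1$ bound gives a contribution $O(C\delta(h,X)B_g(X)^2)$. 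On the ``bad'' set, I use the pointwise inequality $|D_{g_\e}(n)|^2 \leq F_\e(n) := \frac{1}{h}\sum_{n-h<m\leq n}|g_\e(m)-A_{g_\e}|^2$ together with Chebyshev: $\frac{1}{X}\sum_{F_\e(n) > C^2B_g(X)^2} F_\e(n) \leq (C^2 B_g(X)^2)^{-1}\cdot\frac{1}{X}\sum_n F_\e(n)^2$. Applying Cauchy--Schwarz to $F_\e(n)^2$ and swapping sums reduces this to the fourth moment $\frac{1}{X}\sum_m |g_\e(m)-A_{g_\e}|^4$, which by Hildebrand's moment estimates \cite{HilMom} combined with $|g_\e(p)|\leq \e^{-1}B_g(X)$ is $\ll \e^{-2}B_g(X)^4$; hence the bad-set contribution is $O(\e^{-2}B_g(X)^2/C^2)$. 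Optimizing at $C^3 = \e^{-2}/\delta(h,X)$ gives $\Sigma_{g_\e} \ll \delta(h,X)^{2/3}\e^{-2/3}B_g(X)^2$, and choosing $\e(X) := \delta(h,X)^{1/2}$ yields $\Sigma_{g_\e} \ll \delta(h,X)^{1/3}B_g(X)^2 = o(B_g(X)^2)$.

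The main obstacle is the $\ell^1 \to \ell^2$ upgrade for $g_\e$: no useful \emph{pointwise} bound on $|D_{g_\e}(n)|$ is available, so one must substitute the local second-moment proxy $F_\e(n)$ and control its tail via a polynomial-in-$\e^{-1}$ fourth-moment estimate. The whole argument closes because the $\mc{A}_s$ hypothesis $F_g(\e)\to 0$ lets us pick $\e\to 0$ more slowly than $\delta(h,X)\to 0$, balancing the truncation losses on the $g_\e$ side against the variance of $g^\e$.
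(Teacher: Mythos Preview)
Your approach is correct and takes a genuinely different route from the paper's. Both begin with the same threshold decomposition $g = g_\e + g^\e$ and dispatch the large-prime piece $g^\e$ via the trivial Tur\'{a}n--Kubilius bound (this is the paper's Lemmas \ref{lem:L2Red} and \ref{lem:L2IgnoreLrg}). The divergence is in the treatment of $g_\e$: the paper passes to the multiplicative function $F_z(n)=z^{g_\e(n)/B_g(X)}$ via Cauchy's integral formula (Lemma \ref{lem:L2passtoMult}) and invokes the divisor-bounded Matom\"{a}ki--Radziwi\l\l\ theorem of \cite{ManMR} (Theorem \ref{thm:MRDB}), whereas you bootstrap from the $\ell^1$ result Theorem \ref{thm:MRL1} and upgrade to $\ell^2$ by a good-set/bad-set split controlled by a fourth-moment bound. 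Your route is more elementary and makes Theorem \ref{thm:MRL2} essentially a corollary of Theorem \ref{thm:MRL1}, avoiding the machinery of \cite{ManMR} entirely; the fourth-moment input you need, $\frac{1}{X}\sum_m |g_\e(m)-A_{g_\e}|^4 \ll B_g(X)^4 + \sum_p |g_\e(p)|^4/p \ll \e^{-2}B_g(X)^4$, is a standard higher-moment Tur\'{a}n--Kubilius estimate (obtainable by expanding the fourth power or via Rosenthal's inequality; \cite{HilMom} is a valid if indirect reference). One small remark: your explicit choice $\e=\delta(h,X)^{1/2}$ gives a rate but is not required for the qualitative statement; as your closing sentence correctly says, one simply fixes $\e$ small enough that $F_g(\e)$ is below a given threshold and then takes $X\to\infty$, exactly as in the paper's final assembly. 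The paper's approach, in exchange for the extra input, produces the explicit quantitative estimate of Corollary \ref{cor:MRBDtoAdd}.
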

Our proof of Theorem \ref{thm:MRL2} relies on a variant of the Matom\"{a}ki-Radziwi\l\l \ theorem that applies to a large collection of \emph{divisor-bounded} multiplicative functions, proven in the recent paper \cite{ManMR}. See Theorem \ref{thm:MRDB} below for a statement relevant to the current circumstances.
%\begin{rem}
%Theorem \ref{thm:MRL2} may be extended to general additive functions under further assumptions on the values $\{g(p^k)\}_{k \geq 2}$. For instance, it would suffice for 
%$$
%\sum_{\ss{p^k \leq X \\ k \geq 2}} |g(p^k)|^2/p^k = o(B_g(X)^2).
%$$
%We leave this extension of the theorem to the interested reader. The theorem also holds for strongly additive functions, i.e., functions $g$ such that $g(p^k) = g(p)$ for all primes $p$ and $k \geq 1$ (see Lemma \ref{lem:L2Red}).
%\end{rem}

\begin{rem}
The rate of decay in this result depends implicitly on the rate of decay of $F_g(\e)$ and on the size of the contribution to $B_g(X)$ from the prime power values of $g$. We have therefore chosen to state the theorem in this qualitative form for the sake of simplicity.
\end{rem}

It deserves mention that the application of the Matom\"{a}ki-Radziwi\l\l \ method, which will be used in this paper, to the study of specific additive functions is not entirely new. Goudout \cite{Gou}, \cite{Gou2} applied this technique to derive distributional information about $\omega(n)$ in typical short intervals; for example, he proved in \cite{Gou} that the Erd\H{o}s-Kac theorem holds in short intervals $(x-h,x]$ for almost all $x \in [X/2,X]$, as long as $h = h(X) \ra \infty$. 
%by appealing to a standard generating function argument to relate sums of $\omega$ to those of $e^{2\pi i \omega}$. 
The specific novelty of Theorems \ref{thm:MRL1} and \ref{thm:MRL2} lie in their generality, and it is this aspect which will be used in the applications to follow.
%Many of the results in \cite{Erd} pertain to determining when the distribution functions $F_X$ degenerate. Under certain additional hypotheses, such distribution functions can be recentred by a suitably rigid function to obtain a function with a 

\subsection{Applications: gaps and rigidity problems for additive functions} \label{subsec:apps}
Given $c \in \mb{C}$, the arithmetic function $n \mapsto c \log n$ is completely additive. In contrast to a typical additive function $g$, whose values $g(n)$ depend on the prime factorization of $n$ which might vary wildly from one integer to the next, $c\log$ varies slowly and smoothly, with very small gaps 
$$
c\log(n+1)-c\log n = O(1/n) \text{ for all } n \in \mb{N}.
$$
In the seminal paper \cite{Erd}, Erd\H{o}s studied various characterization problems for real- and complex-valued additive functions relating to their local behaviour, and in so doing found several characterizations of the logarithm as an additive function. Among a number of results, he showed that if either
\begin{enumerate}[(a)]
\item $g(n+1) \geq g(n)$ for all $n \in \mb{N}$, or
\item $g(n+1)-g(n) = o(1)$ as $n \ra \infty$
\end{enumerate}
%i.e., when either $g$ is everywhere monotone, or else if $g$ is very slowly varying, 
then there exists $c \in \mb{R}$ such that $g(n) = c\log n$ for all $n \geq 1$. \\
Moreover, Erd\H{o}s and later authors posited that these hypotheses could be relaxed. K\'{a}tai \cite{Kat} and independently Wirsing \cite{Wir2} weakened assumption (b), and proved the above result under the averaged assumption
$$
\lim_{X \ra \infty} \frac{1}{X} \sum_{n \leq X} |g(n+1)-g(n)| = 0.
$$
Hildebrand \cite{Hil2} showed the stronger conjecture of Erd\H{o}s that if $g(n_k+1)-g(n_k) \ra 0$ on a set $\{n_k\}_k$ of density 1 then $g = c \log$; this, of course, is an \emph{almost sure} version of (b). \\
In a different direction, Wirsing \cite{Wir} showed that for completely additive functions $g$, (b) may be weakened to $g(n+1)-g(n) = o(\log n)$ as $n \ra \infty$, and this is best possible. \\
A number of these results were strengthened and generalized by Elliott \cite[Ch. 11]{Ell}, in particular to handle functions $g$ with small gaps $|g(an+b)-g(An+B)|$, for independent linear forms $n \mapsto an+b$ and $n \mapsto An+B$. \\
Characterization problems of these kinds for both additive and multiplicative functions have continued to garner interest more recently. In \cite{klu}, Klurman proved a long-standing conjecture of K\'{a}tai, showing that if a unimodular multiplicative function $f: \mb{N} \ra S^1$ has gaps $|f(n+1)-f(n)|\ra 0$ on average then for some $t \in \mb{R}$ we have $f(n) = n^{it}$ for all $n$. In a later work, Klurman and the author \cite{klu_man_rig} proved a conjecture of Chudakov from the '50s characterizing completely multiplicative functions having uniformly bounded partial sums. See K\'{a}tai's survey paper \cite{KatSur} for numerous prior works in this direction for both additive and multiplicative functions. \\
While these multiplicative results have consequences for additive functions, they are typically limited by the fact that if $g$ is a real-valued additive function then the multiplicative function $e^{2\pi i g}$ is only sensitive to the values $g(n) \pmod{1}$. In particular, considerations about e.g., the monotone behaviour of $g$ cannot be directly addressed by appealing to corresponding results for multiplicative functions. \\
%As applications of the results of the previous subsection, we will consider questions about gaps between consecutive values of general additive functions, as well as related rigidity problems.
\subsubsection{Erd\H{o}s' Conjecture for Almost Everywhere Monotone Additive Functions}
One still open problem stated in \cite{Erd} concerns the \emph{almost sure} variant of problem (a) above.  For convenience, given an additive function $g: \mb{N} \ra \mb{R}$ we set $g(0) := 0$ and define the set of decrease of $g$:
$$
\mc{B} := \{n \in \mb{N} : g(n) < g(n-1)\}, \quad \quad \mc{B}(X) := \mc{B} \cap [1,X].
$$
\begin{conj}[Erd\H{o}s, 1946 \cite{Erd}] \label{conj:Erd}
Let $g: \mb{N} \ra \mb{R}$ be an additive function, such that
\begin{equation}\label{eq:sparseB}
|\mc{B}(X)| = o(X) \text{ as $X \ra \infty$.}
\end{equation}
Then there exists $c \in \mb{R}$ such that $g(n) = c\log n$ for all $n \in \mb{N}$.
\end{conj}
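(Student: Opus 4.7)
The plan is to combine Theorem \ref{thm:MRL1} with the near-monotonicity hypothesis $|\mathcal{B}(Y)| = o(Y)$ to pin $g$ down on a density-one set, then to propagate via additivity to (hopefully) the identity $g \equiv c\log$.

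First I apply Theorem \ref{thm:MRL1} on each dyadic block $(Y/2, Y]$ with a window $h = h(Y) \to \infty$ chosen so that $h(Y)\cdot|\mathcal{B}(Y)|/Y \to 0$, which is possible by hypothesis. Writing $M(Y) := (2/Y)\sum_{Y/2 < m \leq Y} g(m)$, the theorem gives $h^{-1}\sum_{n-h < m \leq n} g(m) = M(Y) + o(B_g(Y))$ on all but $o(Y)$ integers $n \in (Y/2, Y]$. The choice of $h$ ensures $(n-h, n] \cap \mathcal{B} = \emptyset$ for all but $o(Y)$ values of $n$, so on those intervals $g$ is non-decreasing and the short average lies in $[g(n-h+1), g(n)]$. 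Consequently, on a density-one subset $S_Y \subseteq (Y/2, Y]$, one obtains
\begin{equation*}
g(n) = M(Y) + o(B_g(Y)), \qquad g(n) - g(n-h+1) = o(B_g(Y)).
\end{equation*}

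Next I identify $M$ via additivity. For each fixed prime power $p^k$ and for most $n \in (Y/(2p^k), Y/p^k]$ coprime to $p$, both $n \in S_{Y/p^k}$ and $p^k n \in S_Y$ by density considerations, so $g(p^k n) - g(n) = g(p^k)$ forces $M(Y) - M(Y/p^k) = g(p^k) + o(B_g(Y))$. A functional-equation argument in the style of Erd\H{o}s, Wirsing and K\'{a}tai should then yield $M(Y) = c\log Y + o(B_g(Y))$ for some constant $c \in \mathbb{R}$, and hence $g(n) = c\log n + o(B_g(n))$ on a density-one set $S \subseteq \mathbb{N}$. Were this approximation an identity, one would finish immediately: for each prime power $p^k$ the set $\{p^k m : (m,p)=1\}$ has positive density $(1-1/p)p^{-k}$, so it meets $S$ at some $p^k m$ with $m \in S$, forcing $g(p^k) = ck\log p$ and hence $g \equiv c\log$.

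The crucial obstacle is precisely that Theorem \ref{thm:MRL1} inevitably produces an error of size $o(B_g(Y))$, which does not tend to zero when $B_g(Y) \to \infty$---the generic case---so one recovers only an approximate agreement $g \approx c\log$ on a density-one subset of $\mathbb{N}$, rather than the identity demanded by Conjecture \ref{conj:Erd}. Removing this error appears to require additional control preventing $g$ from taking outsized values at the primes; without such an input I do not see how to close the gap by refinements of the methods above, and would expect this approach to yield only an approximate form of the conjecture, with the full identity requiring a supplementary tameness hypothesis of the kind alluded to in the abstract.
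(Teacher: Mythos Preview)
Your assessment is correct: Conjecture~\ref{conj:Erd} is \emph{not} proved in the paper---it remains open---and your proposal accurately identifies both the natural line of attack and its limitation. The paper's Theorem~\ref{thm:almErd} is precisely the approximate form you anticipate: under $|\mathcal{B}(X)| = o(X)$ alone one obtains $g(n) = \lambda(X)\log n - \eta(X) + o(B_g(X))$ on a density-one set, with slowly varying $\lambda,\eta$. Your route via Theorem~\ref{thm:MRL1} and a monotonicity sandwich is the same idea underlying the paper's arguments (compare the proof of Theorem~\ref{thm:iterStep}, where the split into $\mathcal{G}_R^{\pm}$ supplies the two-sided bound your sketch needs but does not spell out), though the paper extracts the structure of the long average through Elliott's functional-equation machinery (Lemma~\ref{lem:EllLS}, Corollary~\ref{cor:EllFA}) rather than the direct additivity comparison you propose. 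One discrepancy: the paper cannot eliminate the additive shift $\eta(X)$, so the conclusion is $\lambda\log n - \eta$ rather than $c\log n$; your claim that $M(Y) = c\log Y + o(B_g(Y))$ with a genuine constant $c$ would amount to showing $\eta = o(B_g(X))$, which the Remark after Theorem~\ref{thm:almErd} notes is not established unconditionally.

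Your diagnosis of the obstruction also matches the paper's. To recover the full identity the paper needs \emph{two} supplementary hypotheses (Corollary~\ref{cor:weakErd}): the tameness condition $F_g(\varepsilon) \to 0$ you allude to, which enables the $\ell^2$ variant Theorem~\ref{thm:MRL2} and thence, via Ruzsa's inequality (Lemma~\ref{lem:Ruzsa}) and Lemma~\ref{lem:smallBgLambda}, the growth bound $B_g(X) = (\log X)^{1+o(1)}$; and the stronger sparseness $|\mathcal{B}(X)| \ll X/(\log X)^{2+\delta}$, which combined with that growth bound drives the average gap to $o(1)$ so that the K\'{a}tai--Wirsing theorem (Theorem~\ref{thm:KatWir}) finishes. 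Theorem~\ref{thm:MRL1} alone does not control $B_g(X)$, which is exactly why your approach, like the paper's unconditional one, stalls at the approximate statement.
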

Thus, if $g$ is non-decreasing except on a set of integers of natural density 0 then $g$ is a constant times a logarithm. \\
Condition \eqref{eq:sparseB} is necessary, as for any $\e > 0$ one can construct a function $g$, not a constant multiple of $\log n$, which is monotone except on a set of upper density at most $\e$. Indeed,
%\footnote{A fairly simple generalization of this construction allows one to alter $\log n$ on an infinite set $S$ of primes, provided $\sum_{p \in S} p^{-1} < \e$.} 
picking a prime $p_0 > 1/\e$ and defining $g = g_{p_0}$ to be the completely additive function given by
$$
g_{p_0}(p) := \begin{cases} \log p &: \ p \neq p_0 \\ p_0 &: p = p_0,\end{cases}
$$
one finds that $g_{p_0}(n) = \log n$ if and only if $ n \notin \mc{B} = \{mp_0 + 1: m \in \mb{N}\}$, with $0 < d\mc{B} = 1/p_0 < \e$.\\
As a consequence of our results on short interval averages of additive functions, we will prove the following partial result towards Erd\H{o}s' conjecture.
\begin{cor}\label{cor:weakErd}
Let $g: \mb{N} \ra \mb{R}$ be a completely additive function that satisfies
\begin{equation}\label{eq:smallLarge}
\lim_{\e \ra 0^+} F_g(\e) = \lim_{\e \ra 0^+} \limsup_{X \ra \infty} \frac{1}{B_g(X)^2} \sum_{\ss{p \leq X \\ |g(p)| > \e^{-1}B_g(X)}} \frac{g(p)^2}{p} = 0.
\end{equation}
Assume furthermore that there is a $\delta > 0$ such that 
%$g(n) \geq g(n-1)$ except on a set $\mc{B}$ that satisfies 
$$
|\mc{B}(X)| \ll X/(\log X)^{2+\delta}. 
%\text{ for all $X$ sufficiently large}.
$$
Then there is a constant $c \in \mb{R}$ such that $g(n) = c\log n$ for all $n \in \mb{N}$.
\end{cor}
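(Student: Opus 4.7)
The plan is to combine Theorem~\ref{thm:MRL2} with the extreme sparseness of $\mathcal{B}$ to obtain pointwise concentration of $g$ around its long mean, then use complete additivity to extract a logarithmic structure, and finally bootstrap to exact equality using hypothesis \eqref{eq:smallLarge}.

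After dispensing with the trivial case $B_g(X) = O(1)$ (where a direct argument using complete additivity and the monotonicity hypothesis forces $g \equiv 0$), we may assume $B_g(X) \to \infty$. Condition (b) of $\mathcal{A}$ is automatic for completely additive $g$ (the $k\geq 2$ contribution is $O(\sum_p g(p)^2/p^2) = o(B_g(X)^2)$), and \eqref{eq:smallLarge} places $g$ in $\mathcal{A}_s$. Apply Theorem~\ref{thm:MRL2} with $h = \lfloor (\log X)^{2+\delta/2}\rfloor$, so that $h \to \infty$ (giving $o(B_g(X)^2)$ decay) while $h\cdot|\mathcal{B}(X)| \ll X/(\log X)^{\delta/2} = o(X)$. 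Set $A_X := \tfrac{2}{X}\sum_{X/2 < m \leq X} g(m)$. For most $n \in (X/2,X]$ both of the intervals $(n-h,n]$ and $(n,n+h]$ are disjoint from $\mathcal{B}$, so $g$ is non-decreasing on $(n-h,n+h]$; simultaneously, by Chebyshev applied to Theorem~\ref{thm:MRL2}, the averages of $g$ over these two intervals each differ from $A_X$ by at most $\eta(X) B_g(X)$ for some $\eta(X) \to 0$. Sandwiching $g(n)$ and $g(n+1)$ between the two short averages yields $|g(n) - A_X| \leq \eta(X) B_g(X)$ on a density-$1$ subset of $(X/2,X]$.

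Next, using complete additivity $g(pn) = g(p) + g(n)$ and the abundance of $n \in (X/2p,X/p]$ such that both $n$ and $pn$ are ``good'' in the sense above, I deduce the functional relation
\[
A_X - A_{X/p} = g(p) + o(B_g(X)) \qquad \text{for every prime } p.
\]
The hypothesis $\lim_{\e \to 0^+} F_g(\e) = 0$ precludes anomalously large prime values contributing to $B_g(X)^2$ in a sparse concentrated manner, and together with the relation above forces the existence of $c \in \mathbb{R}$ with $A_X = c\log X + o(B_g(X))$. Substituting back into Stage~1 gives $g(n) = c\log n + o(B_g(n))$ for a density-$1$ set of $n$. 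To upgrade this to the pointwise equality $g(p) = c\log p$, consider the completely additive function $\tilde{g} := g - c\log$ which satisfies $\tilde{g}(n) = o(B_g(n))$ on a density-$1$ set, and iterate the argument at scales $p^k X$, using the strong quantitative bound $|\mathcal{B}(X)| \ll X/(\log X)^{2+\delta}$ to carry the control from typical $n$ to the specific prime powers $p^k$ via the relation $\tilde{g}(p^k) = k\tilde{g}(p)$.

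The main obstacle is the final bootstrap: the averaged approximation $g(n) = c\log n + o(B_g(n))$ has error much larger than the values $|g(p)|$ at typical primes, so converting to pointwise equality requires delicate use of both hypotheses. The condition $\lim_{\e \to 0^+} F_g(\e) = 0$ is essential to rule out pathological examples of almost-decreasing additive functions with sparse spikes, and the polylogarithmic strengthening of Erd\H{o}s' original density hypothesis to $(\log X)^{-2-\delta}$ is precisely what provides the quantitative headroom needed for short intervals to both enjoy monotonicity and support the MRL2-type approximation simultaneously.
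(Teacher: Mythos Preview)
Your opening moves---placing $g$ in $\mathcal{A}_s$, choosing $h\asymp(\log X)^{2+\delta/2}$ so that almost every short interval is $\mathcal{B}$-free, and combining Theorem~\ref{thm:MRL2} with the monotonicity sandwich to get concentration $g(n)=A_X+o(B_g(X))$ on a density-$1$ set---are essentially what the paper does in the proof of Theorem~\ref{thm:iterStep}, Part I. From there on, however, your argument has two genuine gaps that the paper fills by a quite different route.

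First, your passage from the relation $A_X-A_{X/p}=g(p)+o(B_g(X))$ to ``there exists a constant $c\in\mathbb{R}$ with $A_X=c\log X+o(B_g(X))$'' is not justified. The error $o(B_g(X))$ in that relation is typically far larger than both sides for an individual prime, so one cannot read off $g(p)$; and even summing over primes, telescoping accumulates errors. The paper instead applies Ruzsa's refinement of Tur\'{a}n--Kubilius (Lemma~\ref{lem:Ruzsa}) to the $L^2$ concentration to produce a parameter $\lambda_0(X)$ with $B_{g_{\lambda_0}}(X)=o(B_g(X))$, and then runs a separate slow-variation argument (Proposition~\ref{prop:AgLog} and Lemma~\ref{lem:largeLambda}) to show $\lambda_0(X^u)=\lambda_0(X)(1+o(1))$ and hence $B_g(X)=(\log X)^{1+o(1)}$. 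This growth control on $B_g$ is the decisive structural fact, and nothing in your outline establishes it.

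Second, your ``final bootstrap'' from $\tilde g(n)=o(B_g(n))$ on a density-$1$ set to $\tilde g\equiv 0$ is not a proof; as you yourself note, the error dwarfs typical prime values, and iterating at scales $p^kX$ does not close that gap. The paper avoids this entirely: once $B_g(X)\ll_\varepsilon(\log X)^{1+\varepsilon}$ is known, the telescoping identity
\[
\frac{1}{X}\sum_{n\le X}|g(n)-g(n-1)|\ll B_g(X)\Bigl(\bigl(|\mathcal{B}(X)|/X\bigr)^{1/2}+\tfrac{\log X}{\sqrt X}\Bigr)
\]
(this is \eqref{eq:suffGaps}) combines with $|\mathcal{B}(X)|\ll X/(\log X)^{2+\delta}$ to give $\tfrac{1}{X}\sum|g(n)-g(n-1)|\ll(\log X)^{-\delta/6}\to 0$, after which the K\'{a}tai--Wirsing theorem (Theorem~\ref{thm:KatWir}) yields $g=c\log$ exactly. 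This is where the exponent $2+\delta$ is actually used: it beats the $(\log X)^{1+\varepsilon}$ growth of $B_g$ in the gap estimate, not merely the short-interval length as in your outline.
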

The above corollary reflects the fact that the main difficulties involved in fully resolving Conjecture \ref{conj:Erd} are: i) the possible lack of sparseness of $\mc{B}$ beyond $|\mc{B}(X)| = o(X)$, and ii) the possibility of very large values $|g(p)|$. \\
%
%
%
%
%
%One (slightly imprecise) way of weakening Conjecture \ref{conj:Erd} may be phrased as follows: if \eqref{eq:sparseB} holds for an additive function $g$, then there is a (slowly-varying) function $\lambda(X)$ for which the additive function $g_{\lambda}(n) := g(n)-\lambda(X) \log n$ grows more slowly than $g$ on a dyadic interval $[X/2,X]$, for sufficiently large $X$. More precisely, one might expect to find that
%$$
%\frac{2}{X} \sum_{X/2 < n \leq X} |g_{\lambda}(n) - A_{g_{\lambda}}(X)|^2 = o\left(\frac{1}{X} \sum_{n \leq X} |g(n)-A_g(X)|^2\right),
%$$
%where, for an additive function $h$,
%$$
%A_h(X) := \sum_{p^k \leq X} \frac{h(p^k)}{p^k}\left(1-\frac{1}{p}\right)
%$$
%is the asymptotic mean of $h$ (see Lemma \ref{lem:mean}). \\
More generally, we show that any function $g \in \mc{A}_s$ is \emph{close} to a constant multiple of a logarithm at prime powers.
\begin{thm}\label{thm:iterStep}
Let $g:\mb{N} \ra \mb{R}$ be an additive function belonging to $\mc{A}_s$, and suppose $|\mc{B}(X)| = o(X)$. 
%Assume, furthermore, that \eqref{eq:smallLarge} holds.
%$$
%\lim_{\e \ra 0^+} F_g(\e) = 0.
%$$
Let $X \geq 10$ be large. Then there is $\lambda = \lambda(X)$ with $|\lambda(X)| \ll B_g(X)/\log X$ such that
$$
\sum_{p^k \leq X} \frac{|g(p^k)-\lambda(X)\log p^k|^2}{p^k} = o\left(\sum_{p^k \leq X} \frac{g(p^k)^2}{p^k}\right), \text{ as } X \ra \infty.
$$
Moreover, $\lambda$ is slowly-varying as a function of $X$ in the sense that for every fixed $0 < u \leq 1$,
$$
\lambda(X) = \lambda(X^u) + o\left(\frac{B_g(X)}{\log X}\right).
$$
\end{thm}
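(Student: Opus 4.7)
The plan is to combine Theorem~\ref{thm:MRL2} with the sparsity of $\mc{B}$ to force $g(n)$ to be close to the long average $\bar g(X):=\frac{2}{X}\sum_{X/2<m\leq X}g(m)$ for most $n$ in a dyadic window; to define $\lambda$ via these long averages and establish its slow variation by comparing different scales; and finally to convert the pointwise approximation into the claimed bound on $\sum_{p^k\leq X}|g(p^k)-\lambda(X)\log p^k|^2/p^k$. For the first point, choose $h=h(X)\to\infty$ slowly enough that $h\cdot|\mc{B}(X)|/X\to 0$ (possible since $|\mc{B}(X)|=o(X)$). By Theorem~\ref{thm:MRL2} and Chebyshev's inequality, for all but $o(X)$ values of $n\in(X/2,X]$ both short averages
$$\frac{1}{h}\sum_{n-h<m\leq n}g(m)\quad\text{and}\quad\frac{1}{h}\sum_{n<m\leq n+h}g(m)$$
equal $\bar g(X)+o(B_g(X))$. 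For all but $o(X)$ of $n$ the window $\{n-h+1,\ldots,n+h\}$ also avoids $\mc{B}$, so $g$ is nondecreasing there and is sandwiched between the two short averages. Hence
$$g(n)=\bar g(X)+o(B_g(X))\qquad\text{for all but $o(X)$ values of }n\in(X/2,X].$$

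\textbf{Definition and slow variation of $\lambda$.} Set $\lambda(X):=\bar g(X)/\log X$. Applying the pointwise bound simultaneously at scales $X$ and $X/p$ for a ``medium'' prime $p\leq X^{1/3}$ and a typical $n\in(X/2,X]$ with $p\|n$, additivity gives
$$g(p)=g(n)-g(n/p)=\bar g(X)-\bar g(X/p)+o(B_g(X)).$$
Since $g(p)$ is independent of the scale at which it is read off, running this identity at two scales $X$ and $X^u$ and averaging over $p$ in a suitable dyadic block (using Mertens-type estimates to handle the density of primes) yields the slow variation
$$\lambda(X)=\lambda(X^u)+o(B_g(X)/\log X).$$
The bound $|\lambda(X)|\ll B_g(X)/\log X$ reduces to $|\bar g(X)|\ll B_g(X)$, which follows from the previous step and \eqref{eq:TKIntro}.

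\textbf{Passage to the $B$-type bound.} Set $\tilde g:=g-\lambda(X)\log$; this is additive, with $B_{\tilde g}(X)^2$ equal to the sum we wish to bound. By Hildebrand's lower-bound companion \cite{HilMom} to the Tur\'an--Kubilius inequality, it suffices to show
$$\frac{1}{X}\sum_{n\leq X}|\tilde g(n)-A_{\tilde g}(X)|^2=o(B_g(X)^2),$$
where $A_{\tilde g}(X)$ is the natural mean of $\tilde g$. Decompose this sum dyadically: on each block $(Y/2,Y]$ with $Y=X/2^k$ and $B_g(Y)\to\infty$, the pointwise step applied at scale $Y$, combined with slow variation (used to replace $\lambda(Y)\log n$ by $\lambda(X)\log n$ at negligible cost), gives $\tilde g(n)=o(B_g(X))$ for all but $o(Y)$ of $n$. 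The contribution of exceptional $n$ --- those near $\mc{B}$, those violating the short-average estimate, or those divisible by a prime $p$ with $|g(p)|>\e^{-1}B_g(X)$ --- is absorbed using $F_g(\e)\to 0$ and the negligibility of the prime-power contribution, both guaranteed by $g\in\mc{A}_s$.

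\textbf{Main obstacle.} The crux is the last step: promoting a density statement ($\tilde g(n)$ small for most $n$) to an $\ell^2$ bound, since $g$ could a priori be enormous on a few exceptional primes. The $g\in\mc{A}_s$ hypothesis, specifically $F_g(\e)\to 0$, is precisely what rules out such pathologies. A secondary difficulty is ensuring that the slow variation of $\lambda$ is uniform enough across dyadic scales that the accumulated errors remain $o(B_g(X))$ when inserted into the variance sum.
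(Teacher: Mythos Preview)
Your overall architecture --- use Theorem~\ref{thm:MRL2} together with almost-everywhere monotonicity to sandwich $g(n)$ between two short averages, hence $g(n)=\bar g(X)+o(B_g(X))$ for most $n$ in a dyadic block --- matches the paper's Part~I strategy. The paper carries out exactly this sandwich (splitting into $g(n)\geq A_g$ and $g(n)<A_g$), and, like you, identifies the $\ell^2$ contribution of the exceptional set (shifts of $\mc{B}$) as the main hurdle; it resolves that via a non-trivial large-sieve and dual Tur\'an--Kubilius argument (Proposition~\ref{prop:locLg} and Corollary~\ref{cor:sparseLind}), which you only gesture at.

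There is, however, a genuine gap in your ``Passage to the $B$-type bound''. You set $\lambda(X):=\bar g(X)/\log X$ and claim that, by a Hildebrand-type lower bound, it suffices to show $\frac{1}{X}\sum_{n\leq X}|\tilde g(n)-A_{\tilde g}(X)|^2=o(B_g(X)^2)$ for $\tilde g=g-\lambda(X)\log$. But the sharp lower bound for the second centred moment (Ruzsa, Lemma~\ref{lem:Ruzsa}) gives only $\min_{\mu}\bigl(B_{\tilde g_\mu}(X)^2+\mu^2\bigr)$, \emph{not} $B_{\tilde g}(X)^2$; a small variance of $\tilde g$ says nothing about $B_{\tilde g}(X)$ unless the minimiser $\mu$ is already near $0$ (take $\tilde g=\log$: variance $O(1)$, yet $B_{\tilde g}(X)\asymp\log X$). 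The paper avoids this by taking $\lambda$ to be Ruzsa's minimiser $\lambda_0(X)=\tfrac{2}{(\log X)^2}\sum_{p\leq X}g(p)(\log p)/p$ from the outset, so that Lemma~\ref{lem:Ruzsa} directly yields $B_{g_{\lambda_0}}(X)^2+\lambda_0^2\ll\frac{1}{X}\sum_{n\leq X}|g(n)-A_g(X)|^2=o(B_g(X)^2)$. Relatedly, your bound $|\bar g(X)|\ll B_g(X)$ does not follow from Tur\'an--Kubilius (which controls the variance, not the mean); $A_g(X)$ can in general be much larger than $B_g(X)$, so the claim $|\lambda(X)|\ll B_g(X)/\log X$ is unjustified for your choice of $\lambda$. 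Finally, the paper derives the slow variation of $\lambda_0$ not by your ad hoc two-scale comparison of $g(p)$, but via a separate Elliott-type analysis of $A_g(X)-A_g(X/p^k)$ (Proposition~\ref{prop:AgLog}), after which it checks that the resulting $\lambda$ agrees with $\lambda_0$ up to $o(B_g(X)/\log X)$.
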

%The slow variation of $\lambda(X)$ suggests that $\lambda(X)$ may be viewed as nearly constant.
As we will show in Section \ref{subsec:pfWeakErd}, the conclusion of Theorem \ref{thm:iterStep} and the slow variation of $\lambda$ as a function of $X$ are sufficient to imply that $B_g(X) = (\log X)^{1+o(1)}$ as $X \ra \infty$. Gaining control over the growth of $B_g(X)$ is crucial in establishing Corollary \ref{cor:weakErd}.

Finally, using a result of Elliott \cite{EllFA}, we will prove the following \emph{approximate} version of Erd\H{o}s' conjecture under weaker conditions than in Corollary \ref{cor:weakErd}.
\begin{thm}\label{thm:almErd}
Let $g: \mb{N} \ra \mb{R}$ be an additive function, such that $|\mc{B}(X)| = o(X)$. Then there are parameters $\lambda = \lambda(X)$ and $\eta = \eta(X)$ such that for all but $o(X)$ integers $n \leq X$,
\begin{equation}\label{eq:almostErd}
g(n) = \lambda \log n - \eta + o(B_g(X)).
\end{equation}
The functions $\lambda,\eta$ are slowly-varying in the sense that for any $u \in (0,1)$ fixed,
$$
\lambda(X^u) = \lambda(X) + o\left(\frac{B_g(X)}{\log X}\right), \quad\quad \eta(X^u) = \eta(X) + o(B_g(X)).
$$
%Moreover, we may set $\eta \equiv 0$ if $|\mc{B}(X)| = o(X/(\log\log X)^2).$
%either:
%\begin{enumerate}[(a)]
%\item $|\mc{B}(X)| = o(X/(\log\log X)^2)$, or 
%\item $\max_{p \leq X} |g(p)| = O(B_g(X))$.
%\end{enumerate}
\end{thm}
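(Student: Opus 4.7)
The plan is to combine Theorem~\ref{thm:MRL1} with the sparsity hypothesis on $\mathcal{B}$ to show that $g$ is, in a suitable averaged sense, \emph{dyadically constant}, and then to invoke a theorem of Elliott from \cite{EllFA} to upgrade this dyadic constancy to a logarithmic formula. Throughout, write $M(Y) := \frac{2}{Y}\sum_{Y/2 < n \leq Y} g(n)$ for the dyadic long average.

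First I would choose an integer $h = h(X) \to \infty$ slowly enough that $h\,|\mathcal{B}(X)| = o(X)$, which is possible since $|\mathcal{B}(X)|/X \to 0$. Then all but $o(X)$ values of $n \in (X/2, X]$ satisfy simultaneously the disjointness $(n-h, n] \cap \mathcal{B} = \emptyset$ and (by Theorem~\ref{thm:MRL1}) the short-long comparison $\frac{1}{h}\sum_{n-h<m\leq n} g(m) = M(X) + o(B_g(X))$. For such $n$, $g$ is non-decreasing on $(n-h, n]$, so the short average is sandwiched between $g(n-h+1)$ and $g(n)$; this gives the one-sided bound $g(n) \geq M(X) - o(B_g(X))$ for most $n \in (X/2, X]$. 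Reindexing $n \mapsto n+h-1$ produces the matching upper bound $g(n) \leq M(X) + o(B_g(X))$ for most $n \in (X/2, X]$. Running the same argument on each dyadic interval $(X/2^{k+1}, X/2^k]$ then gives
\[
g(n) = M(\text{dyadic scale of } n) + o(B_g(X)) \quad \text{for all but } o(X) \text{ integers } n \leq X.
\]

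Next I would invoke Elliott's theorem from \cite{EllFA} to pass from this dyadic constancy to the logarithmic formula. The additivity $g(pm) = g(p) + g(m)$, applied to typical factorizations of $n$ in which $p$ and $m$ lie in distinct dyadic windows, imposes a rigid linear relation on the sequence $M(X), M(X/2), M(X/4), \ldots$: essentially, the increments $M(Y) - M(Y/2)$ must be approximately constant in $Y$. The theorem in \cite{EllFA} is designed to exploit exactly this phenomenon, and produces parameters $\lambda = \lambda(X)$ of size $O(B_g(X)/\log X)$ and $\eta = \eta(X)$ such that $M(Y) = \lambda \log Y - \eta + o(B_g(X))$ uniformly across the relevant dyadic range. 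Substituting back into the dyadic-constancy statement yields \eqref{eq:almostErd}.

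Finally, the slow variation of $\lambda$ and $\eta$ follows by running the whole argument at scale $X^u$ in place of $X$ (using $B_g(X^u) \leq B_g(X)$ to keep the error at level $o(B_g(X))$) and equating the two resulting logarithmic fits on a common dyadic window inside $[1, X^u]$. Evaluating at two well-separated points $n \asymp X^u$ and $n \asymp X^{u/2}$ and solving the resulting two linear equations for $\lambda$ and $\eta$ gives $\lambda(X^u) - \lambda(X) = o(B_g(X)/\log X)$ and $\eta(X^u) - \eta(X) = o(B_g(X))$, as claimed. The hard part will be the second step: without additivity, dyadic constancy says nothing about the relative sizes of $M$ at different scales, and it is precisely Elliott's theorem that converts additivity into the logarithmic slope. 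A secondary subtlety is the calibration of $h$, which must grow for Theorem~\ref{thm:MRL1} to furnish an $o(B_g(X))$ error yet slowly enough that most short intervals avoid $\mathcal{B}$; both constraints are simultaneously satisfiable because $|\mathcal{B}(X)|/X \to 0$.
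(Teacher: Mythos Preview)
Your outline matches the paper's strategy: first establish that $g(n)=A_g(X)+o(B_g(X))$ for all but $o(X)$ integers $n\leq X$, then feed this into Elliott's theorem from \cite{EllFA} to obtain $A_g(t)=\lambda(X)\log t-\eta(X)+o(B_g(X))$ uniformly for $X^{\delta}<t\leq X$, from which both \eqref{eq:almostErd} and the slow variation follow. Your sandwiching argument for the first step is valid and is essentially the $\ell^1$ analogue of what the paper uses to prove Theorem~\ref{thm:iterStep}; the paper itself takes a slightly different route here, telescoping $\sum_{n\leq X}|g(n)-g(n-1)|$ to $g(\lfloor X\rfloor)+2\sum_{n\in\mc{B}(X)}|g(n)-g(n-1)|$, bounding the latter by $(|\mc{B}(X)|/X)^{1/2}B_g(X)$ via Cauchy--Schwarz and Tur\'an--Kubilius, and then invoking Proposition~\ref{prop:convGaps}.

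The one place your write-up is genuinely underspecified is the appeal to Elliott. As quoted in the paper, Elliott's Theorem~6 does not take ``dyadic constancy of $M(Y)$'' as its input; its hypothesis is smallness of the prime-power sum
\[
\theta(y):=\sum_{y^{a}<p^{k}\leq y^{b}}\frac{1}{p^{k}}\bigl|g(p^{k})-A_g(y)+A_g(y/p^{k})\bigr|
\]
uniformly over $X^{c}<y\leq X$. The paper bridges this via Lemma~\ref{lem:EllLS}, a dual Tur\'an--Kubilius\,/\,large-sieve estimate that bounds $\theta(y)$ by the $\alpha$th centred moment of $g$ for $1<\alpha<2$; combining this with your first-moment bound (interpolated against Tur\'an--Kubilius through H\"older, as in Lemma~\ref{lem:SmallMoments}) yields $\theta(y)=o(B_g(X))$ and hence Elliott's hypothesis. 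A second small point: in your final paragraph the comparison at scales $X$ and $X^{u}$ should be carried out on $A_g(t)$ (equivalently $M(Y)$), for which Elliott's conclusion is uniform over $t\in(X^{\delta},X]$, rather than on individual values $g(n)$, since the exceptional set of size $o(X)$ in \eqref{eq:almostErd} at scale $X$ could in principle contain all of $[1,X^{u}]$.
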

\begin{rem}
Note that if we knew \eqref{eq:almostErd} held for all three of $n,m,nm \in [1,X]$ then we could deduce that 
$$
\lambda \log(nm) - 2\eta + o(B_g(X)) = g(n) + g(m) = g(nm) = \lambda \log(nm) - \eta + o(B_g(X)),
$$
and thus that $\eta = o(B_g(X))$. As such, \eqref{eq:almostErd} would be valid upon taking $\eta \equiv 0$. Unfortunately, we are not able to confirm this unconditionally. %additional datum only assuming  that $|\mc{B}(X)|$ is sufficiently sparse (in a quantitative sense).
\end{rem}

\subsubsection{On Elliott's Property of Gaps}
Gap statistics provide an important example of local properties of a sequence. Obviously, an additive function $g$ whose values $g(n)$ are globally close to $g$'s mean value must have small gaps $|g(n+1)-g(n)|$. Conversely, it was observed by Elliott that the growth of the gaps between consecutive values of $g$ also control the typical discrepancy of $g(n)$ from its mean. 
\begin{thm1}[Elliott \cite{Ell}, Thm. 10.1]
There is an absolute constant $c > 0$ such that for any additve function $g: \mb{N} \ra \mb{C}$ one has
$$
\frac{1}{X} \sum_{n \leq X} |g(n) - A_g(X)|^2 \ll \sup_{X \leq y \leq X^c} \frac{1}{y} \sum_{n \leq y} |g(n)-g(n-1)|^2,
$$
where $A_g(X) := \sum_{p^k \leq X} g(p^k) p^{-k} (1-1/p)$ is the asymptotic mean value of $g$.
\end{thm1}
Hildebrand \cite{Hil3} showed that any $c > 4$ is admissible. 
%This result gives a partial converse to the obvious result that if $g$ is globally very close to its mean then all of its gaps are small: indeed, by the Cauchy-Schwarz inequality we have
%$$
%\frac{1}{X} \sum_{n \leq X} |g(n)-g(n-1)|^2 \leq 2\left(\frac{1}{X} \sum_{n \leq X} |g(n)-A_g(X)|^2 + \frac{1}{X} \sum_{m \leq X-1} |g(n)-A_g(X)|^2\right).
%$$
Elliott's result shows that if $g$ has exceedingly small gaps on average, even at scales that \emph{grow polynomially in $X$}, then $g$ must globally be very close to its mean. \\
The drawback of this result is that it is in principle possible for the upper bound to be trivial even if the gaps $|g(n+1)-g(n)|$, $n \leq X$, are $o(B_g(X))$ on average, as long as the average savings over $n \leq X^c$ is not large enough to offset the difference in size between $B_g(X)$ and $B_g(X^c)$.\\
%and this may happen if the sequence $\{g(p)\}_p$ takes large values. For example, suppose $g$ is a completely additive function such that $g(p) = p$ for all primes $p$. Then for all $m \in \mb{N}$, one has 
%$$
%P^+(m) \leq g(m) = \sum_{p^k||n} kp \leq P^+(m)\Omega(m) \leq 2P^+(m) \log m,
%$$ 
%where $P^+(m)$ is the largest prime factor of $m$. Since a positive proportion of integers $n \leq X^4$ satisfy $P^+(n-1) \leq X$ and $P^+(n) > X^2$ (see e.g., \cite[Thm. 1.19]{TerBin}) we find that
%$$
%\frac{1}{X^4} \sum_{n \leq X^4} |g(n)-g(n-1)|^2 \geq \frac{1}{2X^4} \sum_{\substack{n \leq X^4 \\ P^+(n-1) \leq X \\ P^+(n) > X^2}} (g(n)^2 - 2g(n-1)^2) \gg X^4;
%$$
%on the other hand, the prime number theorem and partial summation imply that
%$$
%B_g(X)^2 = \sum_{p^k \leq X} k^2p^{2-k} \asymp \frac{X^2}{\log X},
%$$
%so that the Tur\'{a}n-Kubilius inequality (in the form of Lemma \ref{lem:TK} below) yields
%$$
%\frac{1}{X}\sum_{n \leq X} |g(n)-A_g(X)|^2 \ll B_g(X)^2 \ll \frac{X^2}{\log X},
%$$
%which is much smaller.\\
%In fact, if $B_g(X^4)$ grows much faster than $B_g(X)$ then his result gives no information. \\
In Section \ref{sec:gaps}, we obtain two results that complement Elliott's. The first shows that for any additive function $g$, \emph{any} savings in the $\ell^1$-averaged moment of $|g(n)-g(n-1)|$ provides a savings over the trivial bound for the first centred moment. The second, which holds whenever $g \in \mc{A}_s$, gives the same type of information as the first but in an $\ell^2$ sense.
%that shows that any non-trivial savings in the average gap size implies a non-trivial savings in the second-centred moment. \\
\begin{thm}\label{thm:Elltype}
Let $g: \mb{N} \ra \mb{C}$ be an additive function.  
\begin{enumerate}[(a)]
\item We have 
$$
\frac{1}{X} \sum_{n \leq X} |g(n)-g(n-1)| = o(B_g(X)) \text{ if, and only if, } \frac{1}{X} \sum_{n\leq X} |g(n)-A_g(X)| = o(B_g(X)).
$$
\item Assume furthermore that $g \in \mc{A}_s$. 
%$g$ is completely additive, and that
%%\begin{equation*}
%$\lim_{\e \ra 0^+} F_g(\e) = 0.$
%%\end{equation*}
Then
$$
\frac{1}{X} \sum_{n \leq X} |g(n)-g(n-1)|^2 = o(B_g(X)^2) \text{ if, and only if, }
\frac{1}{X} \sum_{n \leq X} |g(n)-A_g(X)|^2 = o(B_g(X)^2).
$$
\end{enumerate}
\end{thm}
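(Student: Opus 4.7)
The proof of Theorem \ref{thm:Elltype} will revolve around the short-interval/long-interval comparison principles in Theorems \ref{thm:MRL1} and \ref{thm:MRL2}, used as substitutes for Elliott's $X^c$-scale gap supremum. The easy direction in each case (control on the centered moment forces control on the gap moment) follows immediately from the telescoping identity
\[
g(n)-g(n-1) = \bigl(g(n)-A_g(X)\bigr) - \bigl(g(n-1)-A_g(X)\bigr),
\]
the triangle inequality in part (a) or the pointwise bound $|a-b|^2 \leq 2|a|^2+2|b|^2$ in part (b), and a shift of summation index (boundary contributions being negligible).

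For the harder direction in (a), set $G_1(X) := \tfrac{1}{X}\sum_{n \leq X}|g(n)-g(n-1)|$ and assume $G_1(X) = o(B_g(X))$. Write $\bar{g}(X) := \tfrac{2}{X}\sum_{X/2<m\leq X}g(m)$ for the dyadic mean, and for a parameter $h = h(X) \to \infty$ to be chosen, decompose
\[
g(n) - \bar{g}(X) = \Bigl(g(n) - \tfrac{1}{h}\sum_{n-h<m\leq n}g(m)\Bigr) + \Bigl(\tfrac{1}{h}\sum_{n-h<m\leq n}g(m) - \bar{g}(X)\Bigr).
\]
Theorem \ref{thm:MRL1} controls the second piece: its $\ell^1$ average over $n \in (X/2,X]$ is $o(B_g(X))$ as soon as $h \to \infty$. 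For the first piece, iterating the triangle inequality against the telescoping bound $|g(n)-g(m)| \leq \sum_{m<k\leq n}|g(k)-g(k-1)|$ and reversing the order of summation yields
\[
\tfrac{2}{X}\sum_{X/2<n\leq X}\Bigl|g(n)-\tfrac{1}{h}\sum_{n-h<m\leq n}g(m)\Bigr| \leq \tfrac{h-1}{2}G_1(X) + O\bigl(\tfrac{h}{X}\bigr),
\]
so the choice $h = \lfloor(B_g(X)/G_1(X))^{1/2}\rfloor$ delivers both $h \to \infty$ and $hG_1(X) = o(B_g(X))$; a dyadic cascade then transfers the estimate to the full range $[1,X]$.

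Part (b) proceeds by essentially the same template, invoking Theorem \ref{thm:MRL2} (available precisely because $g \in \mc{A}_s$) in place of Theorem \ref{thm:MRL1}, and replacing the telescoping estimate by its Cauchy--Schwarz counterpart
\[
|g(n)-g(m)|^2 \leq (n-m)\sum_{m<k\leq n}|g(k)-g(k-1)|^2,
\]
which upon averaging over the short interval and reversing the order of summation contributes $O(h^2 G_2(X))$ to the $\ell^2$ average, with $G_2(X) := \tfrac{1}{X}\sum_{n\leq X}|g(n)-g(n-1)|^2$. Choosing $h$ slowly enough that both $h \to \infty$ and $h^2 G_2(X) = o(B_g(X)^2)$, for instance $h = \lfloor(B_g(X)^2/G_2(X))^{1/4}\rfloor$, then closes the argument.

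I expect the most delicate bookkeeping to concern the distinction between the dyadic mean $\bar{g}(X)$ and the asymptotic mean $A_g(X)$, for which Tur\'an--Kubilius only yields $|\bar{g}(X)-A_g(X)|\ll B_g(X)$. To bridge this I would work dyadically throughout and exploit that $|A_g(X)-A_g(X/2)| = o(B_g(X))$ via Cauchy--Schwarz (since $\sum_{X/2<p^k\leq X}p^{-k}\ll 1/\log X$), so that $A_g$ is effectively slowly varying; this lets one replace $\bar{g}$ at each dyadic scale by $A_g(X)$ at the cost of an $o(B_g(X))$ error that sums back acceptably across the dyadic cascade.
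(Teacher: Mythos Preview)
Your proposal is correct and follows essentially the same route as the paper: the easy direction via the triangle inequality, and the hard direction by writing $g(n)-\bar g(X)$ as a telescoped short-interval piece plus a short-vs-long piece, applying Theorem~\ref{thm:MRL1} (resp.\ Theorem~\ref{thm:MRL2}) to the latter, bounding the former by $O(hG_1(X))$ (resp.\ $O(h^2G_2(X))$), choosing $h$ to balance, and summing dyadically. One remark: your final worry about $\bar g(X)$ versus $A_g(X)$ is overstated, since Lemma~\ref{lem:mean} (not Tur\'an--Kubilius) already gives $\bar g(Y)=A_g(Y)+O(B_g(Y)/\sqrt{\log Y})$, and Lemma~\ref{lem:Ag} passes between $A_g(Y)$ and $A_g(X)$ at the same precision, so no delicate cascade bookkeeping is needed there.
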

See Proposition \ref{prop:convGaps} below, where an explicit dependence between the rates of decay of the gap average and the first centred moment in Theorem \ref{thm:Elltype}(a) is given as a consequence of Theorem \ref{thm:MRL1}.

\begin{rem}
Even in the weak sense of Theorem \ref{thm:Elltype} and even when $g$ takes bounded values at primes, it can be seen that having small gaps on average is a very special property. As a simple example, $g = \omega$, for which $B_{\omega}(X) \sim \sqrt{\log\log X}$, satisfies
$$
\frac{1}{X}\sum_{n \leq X} |\omega(n)-\omega(n-1)| \gg \sqrt{\log\log X},
$$
since by a bivariate version of the Erd\H{o}s-Kac theorem (see e.g., \cite{ManEK}) one can find a positive proportion of integers $n \in [X/2,X]$ such that, simultaneously,
$$
\frac{\omega(n)-\log\log X}{\sqrt{\log\log X}} \geq 2, \quad\quad \frac{\omega(n-1) - \log\log X}{\sqrt{\log\log X}} \leq 1.
$$
In fact, Theorem \ref{thm:Elltype} turns out to imply (in a somewhat weak sense) that if $g$ has a small $\ell^2$ average gap then by a refinement of the Tur\'{a}n-Kubilius inequality due to Ruzsa \cite{Ruz} (see Lemma \ref{lem:Ruzsa} below), $g$ must behave like a constant times a logarithm on average over prime powers.
\end{rem}

%\begin{prop} \label{prop:convGaps}
%Let $0 < \e < 1/3$ and let $X$ be large. Let $g: \mb{N} \ra \mb{C}$ be an additive function. Assume that
%$$
%\frac{1}{Y} \sum_{n \leq Y} |g(n)-g(n-1)|  \ll \e B_g(Y)
%$$
%for all $X/\log X < Y \leq X$. Then we have
%$$
%\frac{1}{X} \sum_{n \leq X} |g(n)-A_g(X)| \ll \left(\frac{\log\log(1/\e)}{\log(1/\e)} + (\log X)^{-1/800}\right) B_g(X).
%$$
% Moreover, there is a $\lambda \in [-\frac{B_g(X)}{\log X},\frac{B_g(X)}{\log X}]$ such that $B_{g_{\lambda}}(X) = o(B_g(X))$.
%\end{prop}

\section{Auxiliary Lemmas}
In this section we record several results that will be used repeatedly in the sequel.
\begin{lem}\label{lem:mean}
Let $g: \mb{N} \ra \mb{C}$ be additive. Then for any $Y \geq 3$,
$$
\frac{1}{Y}\sum_{n \leq Y} g(n) = A_g(Y) + O\left(\frac{B_g(Y)}{\sqrt{\log Y}}\right).
$$
\end{lem}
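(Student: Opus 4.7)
The plan is to open up the sum $\sum_{n\leq Y}g(n)$ using additivity by counting the contribution of each prime power $p^k$, then pick up the main term $A_g(Y)$ with an error coming from the floor functions, and finally estimate that error via Cauchy--Schwarz.

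First, I would write each $n \leq Y$ in terms of its prime power decomposition and swap the order of summation, using the standard identity that for additive $g$,
$$
\sum_{n \leq Y} g(n) = \sum_{p^k \leq Y} g(p^k)\, \#\{n \leq Y : p^k \| n\} = \sum_{p^k \leq Y} g(p^k)\left(\llf \tfrac{Y}{p^k}\rrf - \llf \tfrac{Y}{p^{k+1}}\rrf\right).
$$
Replacing each floor by $Y/p^k - Y/p^{k+1} + O(1)$ gives the main term $Y\cdot A_g(Y)$, with total error at most $\sum_{p^k \leq Y}|g(p^k)|$. Dividing through by $Y$ yields
$$
\frac{1}{Y}\sum_{n \leq Y} g(n) = A_g(Y) + O\!\left(\frac{1}{Y}\sum_{p^k \leq Y}|g(p^k)|\right).
$$

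Next, I would apply the Cauchy--Schwarz inequality by writing $|g(p^k)| = \frac{|g(p^k)|}{p^{k/2}}\cdot p^{k/2}$, which gives
$$
\sum_{p^k \leq Y}|g(p^k)| \leq \left(\sum_{p^k \leq Y}\frac{|g(p^k)|^2}{p^k}\right)^{1/2}\left(\sum_{p^k \leq Y} p^k\right)^{1/2} = B_g(Y)\left(\sum_{p^k \leq Y} p^k\right)^{1/2}.
$$
By the prime number theorem (with Chebyshev-type bounds sufficing), $\sum_{p \leq Y} p \ll Y^2/\log Y$, and the contribution of higher prime powers is $O(Y^{3/2})$ (since $p \leq Y^{1/k}$ for each $k \geq 2$), so $\sum_{p^k \leq Y}p^k \ll Y^2/\log Y$. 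Substituting back, the error is $O(B_g(Y)/\sqrt{\log Y})$, which is exactly the bound claimed.

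There is no real obstacle here; the lemma is essentially a bookkeeping exercise combining the Abel-style expansion of an additive sum with one application of Cauchy--Schwarz. The only minor care needed is to verify the trivial estimate $\sum_{p^k \leq Y} p^k \ll Y^2/\log Y$, where the dominant contribution comes from the primes themselves.
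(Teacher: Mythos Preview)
Your proof is correct and follows essentially the same approach as the paper: both expand $\sum_{n\leq Y} g(n)$ via prime powers to isolate $A_g(Y)$ with error $Y^{-1}\sum_{p^k\leq Y}|g(p^k)|$, then bound this error by Cauchy--Schwarz against $B_g(Y)$. The only cosmetic difference is that the paper first uses $p^{k/2}\leq Y^{1/2}$ and then Cauchy--Schwarz against the count $\pi(Y)$ (via Chebyshev), whereas you apply Cauchy--Schwarz directly and then estimate $\sum_{p^k\leq Y} p^k \ll Y^2/\log Y$; both routes yield the same bound.
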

\begin{proof}
We have
\begin{align*}
&\frac{1}{Y}\sum_{n \leq Y} g(n) - A_g(Y) = \sum_{p^k \leq Y} g(p^k)\left(Y^{-1}\left(\llf \frac{Y}{p^k} \rrf - \llf \frac{Y}{p^{k+1}}\rrf\right) - \frac{1}{p^k}(1-1/p)\right) \\
&\ll \sum_{p^k \leq Y} |g(p^k)| \leq Y^{1/2} \sum_{p^k \leq Y} |g(p^k)|p^{-k/2} \ll B_g(Y) (Y\pi(Y))^{1/2} \ll \frac{B_g(Y)}{\sqrt{\log Y}},
\end{align*}
using the Cauchy-Schwarz inequality and Chebyshev's estimate $\pi(Y) \ll Y/\log Y$ in the last two steps.
\end{proof}

\begin{lem}[Tur\'{a}n-Kubilius Inequality] \label{lem:TK}
Let $X \geq 3$. Uniformly over all additive functions $g: \mb{N} \ra \mb{C}$,
$$
\frac{1}{X}\sum_{n\leq X} |g(n)-A_g(X)|^2 \ll B_g(X)^2.
$$
\end{lem}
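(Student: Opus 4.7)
\medskip

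\noindent\textbf{Proof proposal for Lemma \ref{lem:TK}.} This is the classical Tur\'{a}n--Kubilius inequality, so I would follow the standard expansion-and-inclusion-exclusion argument. Expand the left-hand side as
$$
\sum_{n \leq X} |g(n)-A_g(X)|^2 = \sum_{n \leq X} |g(n)|^2 - 2\,\mathrm{Re}\!\left(\overline{A_g(X)} \sum_{n \leq X} g(n)\right) + X|A_g(X)|^2.
$$
For the linear term I would invoke Lemma \ref{lem:mean} to write $\sum_{n \leq X} g(n) = X A_g(X) + O(B_g(X)\sqrt{X/\log X})$; combined with $X|A_g(X)|^2$ this collapses the cross contributions to $-X|A_g(X)|^2$ plus a harmless error of size $O(|A_g(X)| B_g(X)\sqrt{X/\log X})$.

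The main work is on the quadratic term. Since $g$ is additive, for $n \leq X$ we have $g(n) = \sum_{p^k \leq X} g(p^k) \mathbf{1}_{p^k \| n}$, so writing $c_{p^k} := p^{-k}(1-1/p)$,
$$
\sum_{n \leq X} |g(n)|^2 = \sum_{p^k, q^j \leq X} g(p^k)\overline{g(q^j)}\, \bigl|\{n \leq X : p^k \| n,\ q^j \| n\}\bigr|.
$$
I would split this into: (a) the diagonal $p=q$, $k=j$, whose count equals $Xc_{p^k} + O(1)$; (b) same-prime off-diagonal ($p=q$, $k \neq j$), which contributes $0$; and (c) distinct primes $p \neq q$. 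For (c) a four-term inclusion-exclusion with floor functions yields count $= X c_{p^k} c_{q^j} + O(1)$ when $p^k q^j \leq X$ and $0$ otherwise (in which case $Xc_{p^k}c_{q^j} < 1$ already). Summing the main terms in (c) gives $X(|A_g(X)|^2 - \sum_p |A_p|^2)$ where $A_p := \sum_{k:p^k\leq X} g(p^k)c_{p^k}$.

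Putting these together I expect a complete cancellation:
$$
\sum_{n \leq X}|g(n)-A_g(X)|^2 = X\sum_{p^k \leq X} |g(p^k)|^2 c_{p^k} - X \sum_p |A_p|^2 + \text{errors} \leq X B_g(X)^2 + \text{errors}.
$$
The final step is to control the errors, which is where I expect the only real bookkeeping to arise. The diagonal $O(1)$-per-term error is $\ll \sum_{p^k \leq X} |g(p^k)|^2 \leq X \sum_{p^k \leq X} |g(p^k)|^2/p^k = XB_g(X)^2$ (using $p^k \leq X$ termwise). For the off-diagonal $O(1)$-per-term error, one has to be more careful: use $2|g(p^k)g(q^j)| \leq |g(p^k)|^2 + |g(q^j)|^2$ and then, for each fixed $p^k$, count $|\{q^j : q^j \leq X/p^k\}| \ll (X/p^k)/\log(2X/p^k)$ by Chebyshev, splitting at $p^k \leq X^{1/2}$ versus $p^k > X^{1/2}$ to gain either a $1/\log X$ or a direct $X/p^k$ bound; both ranges yield $\ll XB_g(X)^2$. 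The cross-term error from Lemma \ref{lem:mean} is bounded trivially via Cauchy--Schwarz as $o(XB_g(X)^2)$, and we are done.

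\medskip

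\noindent The single nontrivial point is the off-diagonal error estimate, since the naive bound $\sum_{p^k, q^j \leq X} |g(p^k)g(q^j)| \cdot O(1)$ (without imposing the constraint $p^k q^j \leq X$ coming from the count being nonzero) loses a factor of $\log X$; exploiting the constraint and Chebyshev's bound restores the correct order of magnitude.
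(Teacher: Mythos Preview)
The paper itself does not give a proof here; it simply cites \cite[Lem.~1.5]{Ell}. Your proposal supplies a direct argument, and the overall strategy is the standard one, but there is a genuine gap in the off-diagonal error. You sum the main term $Xc_{p^k}c_{q^j}$ over \emph{all} distinct-prime pairs to obtain $X(|A_g(X)|^2-\sum_p|A_p|^2)$; this forces you to absorb, as part of the error, the extension term $-Xc_{p^k}c_{q^j}$ on every pair with $p^kq^j>X$ (where the true count is exactly $0$). You note this is $<1$ per pair, but your explicit bound then restricts to $q^j\leq X/p^k$, i.e.\ to the \emph{complementary} range $p^kq^j\leq X$; the extension error is never summed. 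It is not negligible: applying your AM--GM step to it yields $X\sum_{p^k}|g(p^k)|^2c_{p^k}\sum_{q^j>X/p^k}c_{q^j}$, and for $p^k$ near $X$ the inner Mertens-type sum is $\asymp\log\log X$. For $g(p)=\sqrt{p}\cdot 1_{p\in(X/e,X]}$ (zero elsewhere) one has $B_g(X)^2\asymp X/\log X$ and this bound is genuinely of order $XB_g(X)^2\log\log X$, so the argument as written misses the Tur\'an--Kubilius constant by a factor of $\log\log X$.

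The standard cure is to split $g=g_1+g_2$ along prime powers $\leq X^{1/2}$ and $>X^{1/2}$ \emph{before} expanding the square: for $g_1$ every off-diagonal pair automatically has $p^kq^j\leq X$ and your Chebyshev count goes through verbatim; for $g_2$ each $n\leq X$ has at most one prime power $>X^{1/2}$ exactly dividing it, so there is no off-diagonal at all and $\sum_n|g_2(n)|^2=\sum_{p^k>X^{1/2}}|g(p^k)|^2(Xc_{p^k}+O(1))\ll XB_g(X)^2$ directly. (A separate minor slip: the error you quote from Lemma~\ref{lem:mean} is $O(XB_g(X)/\sqrt{\log X})$, not $O(B_g(X)\sqrt{X/\log X})$; this is harmless since the resulting cross term is still $o(XB_g(X)^2)$ after bounding $|A_g(X)|\ll B_g(X)\sqrt{\log\log X}$.)
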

\begin{proof}
This is e.g., \cite[Lem. 1.5]{Ell} (taking $\sg = 0$).
\end{proof}

The following estimate due to Ruzsa, which sharpens the Tur\'{a}n-Kubilius inequality, gives an order of magnitude estimate for the second centred moment of a general additive function.
\begin{lem}[Ruzsa \cite{Ruz}]\label{lem:Ruzsa}
Let $g: \mb{N} \ra \mb{C}$ be an additive function. Then
$$
\frac{1}{X} \sum_{n \leq X} |g(n)-A_g(X)|^2 \asymp \min_{\lambda \in \mb{R}} (B_{g_{\lambda}}(X)^2 + \lambda^2) \asymp B_{g_{\lambda_0}}(X)^2 + \lambda_0^2,
$$
where $\lambda_0 = \lambda_0(X)$ is given by
$$
\lambda_0(X) = \frac{2}{(\log X)^2} \sum_{p \leq X} \frac{g(p)\log p}{p}.
$$
\end{lem}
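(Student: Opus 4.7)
Since this is Ruzsa's theorem from \cite{Ruz}, I will outline the strategy. By decomposing into real and imaginary parts, one reduces to the case where $g$ is real-valued (with $\lambda_0$ then being real), so I assume $g$ real throughout. The crucial observation is that for any $\lambda \in \mb{R}$, the function $g_{\lambda} := g - \lambda \log$ is additive, and
$$
g(n) - A_g(X) = (g_{\lambda}(n) - A_{g_{\lambda}}(X)) + \lambda(\log n - A_{\log}(X)).
$$
The $L^2$-triangle inequality, the Tur\'an-Kubilius inequality (Lemma \ref{lem:TK}) applied to $g_\lambda$, and the estimate $\frac{1}{X}\sum_{n \leq X}(\log n - A_{\log}(X))^2 = O(1)$ (which follows by direct computation from Stirling's formula $\frac{1}{X}\sum_{n \leq X}\log n = \log X - 1 + o(1)$ and Mertens' estimate $A_{\log}(X) = \log X + O(1)$) together yield
$$
\frac{1}{X}\sum_{n \leq X}|g(n) - A_g(X)|^2 \ll B_{g_{\lambda}}(X)^2 + \lambda^2
$$
for every $\lambda \in \mb{R}$. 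Taking the infimum establishes the upper-bound direction of both $\asymp$'s.

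To identify $\lambda_0$ as a near-optimal $\lambda$, I expand $q(\lambda) := B_{g_{\lambda}}(X)^2 + \lambda^2$ as a real quadratic in $\lambda$ and differentiate; the true minimizer is
$$
\lambda_{\min} = \frac{\sum_{p^k \leq X} g(p^k) \log p^k /p^k}{1 + \sum_{p^k \leq X}(\log p^k)^2/p^k} = \lambda_0 + O\!\left(\frac{B_g(X)}{(\log X)^2}\right),
$$
where the denominator is controlled by the Chebyshev--Mertens estimate $\sum_{p^k \leq X}(\log p^k)^2/p^k = \tfrac{1}{2}(\log X)^2 + O(\log X)$ and the error in the numerator comes from the $k \geq 2$ contribution, bounded by Cauchy-Schwarz in terms of $B_g(X)$. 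This perturbation is small enough that $q(\lambda_0) \asymp \min_{\lambda \in \mb{R}} q(\lambda)$.

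The lower bound is the hard direction. The plan is to expand $g = g_{\lambda_0} + \lambda_0 \log$ and use the variance identity
$$
\frac{1}{X}\sum_{n \leq X}(g(n) - A_g(X))^2 = \frac{1}{X}\sum_n (g_{\lambda_0}(n) - A_{g_{\lambda_0}}(X))^2 + 2\lambda_0 \Sigma + \lambda_0^2 \cdot \frac{1}{X}\sum_n (\log n - A_{\log}(X))^2,
$$
where $\Sigma$ is the $n$-average of the product of the two centred sequences. The defining property of $\lambda_0$ as the best linear-regression coefficient of $g$ onto $\log$ in the Dirichlet inner product $\langle f,h\rangle := \sum_{p^k \leq X} f(p^k)h(p^k)/p^k$ forces $g_{\lambda_0}$ to be essentially orthogonal to $\log$ in this inner product, and a Cauchy-Schwarz bound then shows that $|\Sigma|$ is a lower-order error relative to $B_{g_{\lambda_0}}(X)^2 + \lambda_0^2$. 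The final and most delicate ingredient, which is the main obstacle, is the \emph{reverse} Tur\'an-Kubilius inequality
$$
\frac{1}{X}\sum_{n \leq X}(g_{\lambda_0}(n) - A_{g_{\lambda_0}}(X))^2 \gg B_{g_{\lambda_0}}(X)^2,
$$
which is Ruzsa's key contribution in \cite{Ruz}. It is proved by a careful second-moment computation that tracks the failure of independence of the local variables $Y_p(n) := g(p^{v_p(n)})$ as $p$ ranges over primes $\leq X$. The essential obstruction to a blind reverse Tur\'an-Kubilius is the example $g = \log$ itself, for which the centred $L^2$-moment is $O(1)$ whereas $B_{\log}(X)^2 \asymp (\log X)^2$; subtracting $\lambda_0 \log$ from $g$ is precisely calibrated to remove that obstruction, and verifying that it is the \emph{only} obstruction is the technical heart of Ruzsa's argument. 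Combining the three pieces with $\frac{1}{X}\sum_n(\log n - A_{\log}(X))^2 \asymp 1$ yields $\frac{1}{X}\sum_n(g(n) - A_g(X))^2 \gg B_{g_{\lambda_0}}(X)^2 + \lambda_0^2$, completing the proof.
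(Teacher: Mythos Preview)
The paper does not supply a proof of this lemma; it is quoted from Ruzsa \cite{Ruz} and used as a black box, so there is no in-paper argument to compare against. Your outline is a faithful sketch of the standard strategy: the upper bound via Tur\'an--Kubilius applied to $g_\lambda$ together with the bounded variance of $\log n$ is correct, and you rightly identify the reverse Tur\'an--Kubilius inequality (after projecting out the $\log$ direction) as Ruzsa's main contribution for the lower bound.

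Two steps in your outline would need more care in a full write-up. First, the assertion that $q(\lambda_0) \asymp q(\lambda_{\min})$ does not follow from the perturbation bound $\lambda_{\min} - \lambda_0 = O(B_g(X)/(\log X)^2)$ alone: the quadratic $q$ has leading coefficient $\asymp (\log X)^2$, so the perturbation contributes an additive error $\ll B_g(X)^2/(\log X)^2$, and one must still verify that this is $\ll q(\lambda_{\min})$. Second, the claim that the cross term $\Sigma$ is ``lower order'' does not follow from Cauchy--Schwarz together with orthogonality of $g_{\lambda_0}$ to $\log$ in the Dirichlet inner product; one needs to transfer that orthogonality to the $n$-average (e.g.\ by expanding $\Sigma$ directly over prime powers), which is a genuine calculation rather than an immediate consequence. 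Both points are handled in Ruzsa's paper, so your outline is pointed in the right direction even if these details are left implicit.
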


\begin{lem} \label{lem:Ag}
Let $g: \mb{N} \ra \mb{C}$ be additive, and let $1 \leq y/2 \leq z \leq y$. Then
$$
A_g(z) = A_g(y) + O\left(\frac{B_g(y)}{\sqrt{\log y}}\right).
$$
\end{lem}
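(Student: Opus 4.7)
The plan is to observe that $A_g$ telescopes along prime powers: by definition,
$$
A_g(y) - A_g(z) = \sum_{z < p^k \leq y} g(p^k) p^{-k}(1-1/p),
$$
so the task reduces to bounding this tail sum when $y/2 \leq z \leq y$. The natural tool is the Cauchy-Schwarz inequality, which splits the sum into the $B_g$-like factor $\sum_{z<p^k\leq y} |g(p^k)|^2/p^k$ and a purely arithmetic factor $\sum_{z<p^k\leq y} (1-1/p)^2/p^k$. The first factor is clearly bounded by $B_g(y)^2$, being a partial sum of the defining sum of $B_g(y)^2$.

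The main point, then, is to show the second factor is $O(1/\log y)$. For the $k=1$ contribution, by Mertens' theorem
$$
\sum_{z < p \leq y} \frac{1}{p} = \log\log y - \log\log z + o(1) \asymp \log\!\left(\frac{\log y}{\log(y/2)}\right) \ll \frac{1}{\log y},
$$
using $z \geq y/2$ in the last step. The higher prime power contribution $\sum_{k \geq 2,\, z < p^k \leq y} p^{-k}$ is dominated by $\sum_{p > z^{1/2}} p^{-2} \ll z^{-1/2}(\log z)^{-1} \ll 1/\log y$, which is negligible.

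Combining these, Cauchy-Schwarz yields
$$
|A_g(y) - A_g(z)| \leq B_g(y) \cdot O\!\left(\frac{1}{\sqrt{\log y}}\right),
$$
which is the claimed bound. There is no real obstacle here; the only point requiring mild attention is verifying the Mertens-type estimate with the constraint $z \geq y/2$, and confirming that the prime power ($k \geq 2$) terms give a negligible contribution compared to $1/\log y$.
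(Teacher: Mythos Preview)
Your proof is correct and follows essentially the same route as the paper: bound $|A_g(y)-A_g(z)|$ by $\sum_{z<p^k\leq y}|g(p^k)|/p^k$, apply Cauchy--Schwarz, and use Mertens' theorem to show $\sum_{y/2<p^k\leq y}p^{-k}\ll 1/\log y$. One minor quibble: writing the Mertens error as ``$+o(1)$'' is too weak to conclude $\ll 1/\log y$; you need the quantitative form $\sum_{p\leq t}1/p=\log\log t+M+O(1/\log t)$ (or, more simply, Chebyshev: there are $\ll y/\log y$ primes in $(y/2,y]$, each exceeding $y/2$).
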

\begin{proof}
We have
$$
|A_g(y)-A_g(z)| \leq \sum_{z < p^k \leq y} \frac{|g(p^k)|}{p^k} \leq B_g(y)\left(\sum_{y/2 < p^k \leq y} \frac{1}{p^k}\right)^{1/2} \ll \frac{B_g(y)}{\sqrt{\log y}},
$$
where the last estimate follows from Mertens' theorem.
\end{proof}

\begin{lem}\label{lem:ptwiseBdg}
Let $X\geq 3$ and let $n \in (X/2,X]$. Then $\frac{|g(n)|}{n} \ll \frac{B_g(X)\log X}{\sqrt{X}}$.
\end{lem}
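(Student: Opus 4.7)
The strategy I would use is elementary: combine additivity of $g$ with the observation that $B_g(X)$ pointwise dominates every individual prime-power value, up to a factor $p^{k/2}$.

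First I would write, by additivity, $g(n) = \sum_{p^a \| n} g(p^a)$, where the sum runs over the exact prime-power divisors of $n$, and then apply the triangle inequality to obtain $|g(n)| \leq \sum_{p^a \| n} |g(p^a)|$. Next, I would exploit the definition $B_g(X)^2 = \sum_{p^k \leq X} |g(p^k)|^2/p^k$ termwise: since $p^a \leq n \leq X$ for each $p^a \| n$, we have $|g(p^a)|^2/p^a \leq B_g(X)^2$, and hence $|g(p^a)| \leq B_g(X)\, p^{a/2} \leq B_g(X)\sqrt{n}$.

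Finally, I would sum over the $\omega(n)$ exact prime-power divisors of $n$, using the crude bound $\omega(n) \leq \log n/\log 2 \ll \log X$ (which follows from $n \geq 2^{\omega(n)}$). This gives $|g(n)| \ll B_g(X)\sqrt{n}\,\log X \ll B_g(X)\sqrt{X}\,\log X$, and dividing by $n > X/2$ yields the stated inequality.

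There is no substantive obstacle here; the only choice is cosmetic. One can in fact remove the $\log X$ factor by replacing the step $|g(p^a)| \leq B_g(X)\sqrt{n}$ with a single application of Cauchy--Schwarz,
$$\sum_{p^a \| n} |g(p^a)| \leq \Bigl(\sum_{p^a \| n} \tfrac{|g(p^a)|^2}{p^a}\Bigr)^{1/2}\Bigl(\sum_{p^a \| n} p^a\Bigr)^{1/2} \leq B_g(X)\sqrt{n},$$
where the second factor is controlled using $\sum_{p^a \| n} p^a \leq \prod_{p^a \| n} p^a = n$ (valid because each $p^a \geq 2$, so the product dominates the sum). Since the lemma as stated allows a logarithmic slack, the cruder route above is enough.
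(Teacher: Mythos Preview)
Your proof is correct and follows essentially the same route as the paper: bound each exact prime-power value by $|g(p^a)| \leq B_g(X)\,p^{a/2} \leq B_g(X)\sqrt{n}$, sum over the $\omega(n) \ll \log X$ such divisors, and divide by $n > X/2$. Your closing Cauchy--Schwarz remark, which removes the $\log X$ factor, is a genuine (and correct) sharpening not present in the paper.
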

\begin{proof}
Observe that whenever $p^k \leq X$ we have $g(p^k)/p^{k/2} \leq B_g(X)$. It follows from the triangle inequality and $\omega(n) \leq \log n$ that
$$
\frac{|g(n)|}{n} \leq \frac{\omega(n)}{n} \max_{p^k||n} |g(p^k)| \leq \frac{\omega(n)}{\sqrt{n}} \max_{p^k || n} \frac{|g(p^k)|}{p^{k/2}} \leq \frac{2\log X}{\sqrt{X}} B_g(X),
$$
and the claim follows.
%Suppose $n = p^k$ is a prime power. Then
%$$
%\frac{|g(n)|}{n} = \frac{1}{\sqrt{n}} \frac{g(p^k)}{p^{k/2}} \leq \frac{2B_g(X)}{\sqrt{X}},
%$$
%and the claimed bound is trivial. 
%If instead, $n \neq p^k$ then $A_g(n) = A_g(n-1)$, and as $|A_g(n)| \leq B_g(X) \sqrt{\log\log X}$ by Cauchy-Schwarz,
%$$
%\frac{|g(n)|}{n} = \left|\frac{1}{n}\sum_{m \leq n} g(m) - \frac{1}{n} \sum_{m \leq n-1} g(n)\right| \leq |A_g(n) - A_g(n-1)| + O\left(\frac{|A_g(n)|}{n}+ \frac{B_g(X)}{\sqrt{\log X}} \right) = O\left(\frac{B_g(X)}{\sqrt{\log X}}\right),
%$$
%as claimed.
\end{proof}
%\begin{lem}\label{lem:small values}
%Let $g: \mb{N} \ra \mb{C}$ be an additive function. Then $|g(n)| \ll B_g(n)/\sqrt{\log n}$. Furthermore, if $n$ is a prime power then $|g(n)| \leq B_g(n

Working within the collection $\mc{A}$, the following properties will be useful.
\begin{lem}\label{lem:redtoSA}
a) Let $g: \mb{N} \ra \mb{C}$ be an additive function satisfying $B_g(X) \ra \infty$. If $g$ is either completely or strongly additive then $g \in \mc{A}$. \\
b) Let $g \in \mc{A}$. Then there is a strongly additive function $g^{\ast}$ such that $B_{g-g^{\ast}}(X) = o(B_g(X))$ as $X \ra \infty$.
\end{lem}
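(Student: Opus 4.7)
The plan is to attack both parts by reducing everything to control of the auxiliary sum $S(X) := \sum_{p \leq \sqrt{X}} |g(p)|^2/p^2$ and showing $S(X) = o(B_g(X)^2)$ via a splitting argument, using the fact that $B_g(X) \to \infty$.

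For part (a), start with the observation that if $g$ is either strongly or completely additive then $|g(p^k)|^2 \leq k^2 |g(p)|^2$, so
\[
\sum_{\substack{p^k \leq X \\ k \geq 2}} \frac{|g(p^k)|^2}{p^k} \leq \sum_{p \leq \sqrt{X}} |g(p)|^2 \sum_{k \geq 2} \frac{k^2}{p^k} \ll \sum_{p \leq \sqrt{X}} \frac{|g(p)|^2}{p^2} = S(X),
\]
using $\sum_{k \geq 2} k^2 p^{-k} = O(p^{-2})$ for $p \geq 2$. The remaining task is to show $S(X) = o(B_g(X)^2)$. Given any integer $M \geq 1$, split the sum at $M$: the contribution of primes $p \leq M$ is a finite constant $C_M$ depending only on $g$ and $M$, while the primes $p > M$ contribute at most
\[
\sum_{M < p \leq \sqrt{X}} \frac{|g(p)|^2}{p^2} \leq \frac{1}{M} \sum_{p \leq X} \frac{|g(p)|^2}{p} \leq \frac{B_g(X)^2}{M}.
\]
Dividing by $B_g(X)^2$ and sending first $X \to \infty$ (using $B_g(X) \to \infty$) and then $M \to \infty$ yields $S(X)/B_g(X)^2 \to 0$, which gives condition (b) of the definition of $\mathcal{A}$. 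Condition (a) is assumed, so $g \in \mathcal{A}$.

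For part (b), the natural candidate is the strongly additive function $g^\ast$ defined by $g^\ast(p^k) := g(p)$ for every prime $p$ and $k \geq 1$. Then $(g-g^\ast)(p) = 0$ for all primes $p$, so
\[
B_{g - g^\ast}(X)^2 = \sum_{\substack{p^k \leq X \\ k \geq 2}} \frac{|g(p^k) - g(p)|^2}{p^k} \leq 2 \sum_{\substack{p^k \leq X \\ k \geq 2}} \frac{|g(p^k)|^2}{p^k} + 2 \sum_{\substack{p^k \leq X \\ k \geq 2}} \frac{|g(p)|^2}{p^k}.
\]
The first sum on the right is $o(B_g(X)^2)$ directly from condition (b) of $\mathcal{A}$. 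The second sum is bounded exactly as in part (a) by $O(S(X))$, and the splitting argument above shows this is also $o(B_g(X)^2)$. To apply the splitting step, I need $B_g(X) \to \infty$, which is hypothesis (a) of $\mathcal{A}$; in fact the same condition guarantees that $\sum_{p \leq X} |g(p)|^2/p = (1-o(1))B_g(X)^2 \to \infty$, which is what makes the $M$-splitting effective.

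The only mildly delicate point, and the one I would single out as the main pitfall to avoid, is making sure the splitting argument is valid: one must take the parameter $M$ after having used $B_g(X) \to \infty$ to absorb the finite constant $C_M$, i.e., the quantifiers are $\forall \varepsilon > 0 \ \exists M \ \exists X_0$, not the reverse. Once that order is respected, both parts follow cleanly from the elementary estimate $\sum_{k \geq 2} k^2 p^{-k} \ll p^{-2}$ together with the monotonicity of $B_g$.
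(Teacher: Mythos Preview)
Your proof is correct and follows essentially the same approach as the paper: the same bound $|g(p^k)|^2 \leq k^2|g(p)|^2$ reducing to $\sum_p |g(p)|^2/p^2$, the same splitting at a parameter $M$ to show this is $o(B_g(X)^2)$, and the same choice $g^\ast(p^k) := g(p)$ in part (b) handled via $|a-b|^2 \leq 2|a|^2 + 2|b|^2$. The only cosmetic differences are that you restrict to $p \leq \sqrt{X}$ (which is slightly sharper but immaterial) and phrase the $M$-argument as a $\forall\varepsilon\,\exists M$ statement rather than choosing $M = M(X) \to \infty$ slowly.
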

\begin{proof}
a) Let $g$ be either strongly or completely additive. We put $\theta_g := 1$ if $g$ is completely additive, and $\theta_g := 0$ otherwise. Then
$$
\sum_{\ss{p^k \leq X \\ k \geq 2}} \frac{|g(p^k)|^2}{p^k} \leq \sum_{p \leq X} \frac{|g(p)|^2}{p} \sum_{k \geq 2} \frac{k^{2\theta_g}}{p^{k-1}} \ll \sum_{p \leq X} \frac{|g(p)|^2}{p^2}.
$$
Since $B_g(X) \ra \infty$, choosing $M = M(X)$ tending to infinity arbitrarily slowly we see that
$$
\sum_{p \leq X} \frac{|g(p)|^2}{p^2} \leq \sum_{p \leq M} \frac{|g(p)|^2}{p} + \frac{1}{M} \sum_{M < p \leq X} \frac{|g(p)|^2}{p} \leq B_g(M)^2 + \frac{B_g(X)^2}{M} = o(B_g(X)^2).
$$
It follows that $g \in \mc{A}$, as required. \\
b) We define $g^{\ast}(p^k) := g(p)$ for all primes $p$ and $k \geq 1$, so that $g^{\ast}$ is strongly additive; moreover, if $(g-g^{\ast})(p^k) \neq 0$ then $k \geq 2$, for any $p$. By assumption and part a), $g,g^{\ast} \in \mc{A}$. Moreover, the argument in a) implies that
$$
\sum_{\ss{p^k \leq X \\ k \geq 2}} \frac{|g^{\ast}(p^k)|^2}{p^k} \ll \sum_{p \leq X} \frac{|g(p)|^2}{p^2} = o(B_g(X)^2).
$$
It follows from the Cauchy-Schwarz inequality that
$$
B_{g-g^{\ast}}(X)^2 \ll \sum_{\ss{p^k \leq X \\ k \geq 2}} \frac{|g(p^k)|^2}{p^k} + \sum_{\ss{p^k \leq X \\ k \geq 2}} \frac{|g^{\ast}(p^k)|^2}{p^k} = o(B_g(X)^2),
$$
%
%
%
%It follows that for any $M = M(X)$,
%\begin{align*}
%B_{g-g^{\ast}}(X)^2 &= \sum_{\ss{p^k \leq X \\ k \geq 2}} \frac{|g(p^k)-g(p)|^2}{p^k} \ll \sum_{\ss{p^k \leq X \\ k \geq 2}} \frac{|g(p^k)|^2}{p^k} + \sum_{\ss{p^k \leq X \\ p \leq M}} \frac{|g(p)|^2}{p} + \sum_{M < p \leq X} \frac{|g(p)|^2}{p} \sum_{k \geq 2} \frac{1}{p^{k-1}} \\
%&\ll \sum_{\ss{p^k \leq X \\ k \geq 2}} \frac{|g(p^k)|^2}{p^k} + B_g(M)^2 + \frac{B_g(X)^2}{M}.
%\end{align*}
%Since $g \in \mc{A}$, we have $B_g(X) \ra \infty$ and also $\sum_{\ss{p^k \leq X \\ k \geq 2}} |g(p^k)|^2p^{-k} = o(B_g(X)^2)$. Choosin $M(X)$ sufficiently slowly growing with $X$, we deduce that $B_{g-g^{\ast}}(X) = o(B_g(X))$, 
as required.
\end{proof}

Finally, we record the characterization result of K\'{a}tai and Wirsing, mentioned in the introduction.
% which will be useful to us later on. 
\begin{thm}[K\'{a}tai \cite{Kat}, Wirsing \cite{Wir}] \label{thm:KatWir}
Let $g: \mb{N} \ra \mb{C}$ be an additive function such that
$$
\frac{1}{X}\sum_{n \leq X} |g(n)-g(n-1)| = o(1),
$$
as $X \ra \infty$. Then there is $c \in \mb{C}$ such that $g(n) = c\log n$ for all $n \in \mb{N}$.
\end{thm}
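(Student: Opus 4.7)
The strategy is to reduce the additive rigidity question to a multiplicative one and invoke Klurman's theorem (cited in the introduction), which classifies unimodular multiplicative functions with $\ell^1$-averaged small gaps. First, reduce to real-valued $g$: if $g = u + iv$ with $u, v : \mb{N} \ra \mb{R}$ additive, then $|u(n) - u(n-1)|, |v(n) - v(n-1)| \leq |g(n) - g(n-1)|$, so the hypothesis is inherited by $u$ and $v$, and it suffices to prove each is a real multiple of $\log$.

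For real $g$ and each $t \in \mb{R}$, set $f_t(n) := e^{itg(n)}$, a unimodular multiplicative function. The inequality $|e^{iu} - e^{iv}| \leq |u - v|$ (for real $u,v$) and the hypothesis yield
$$
\frac{1}{X}\sum_{n \leq X} |f_t(n) - f_t(n-1)| \leq |t| \cdot \frac{1}{X}\sum_{n \leq X} |g(n) - g(n-1)| = o(1).
$$
Klurman's theorem then produces, for each fixed $t$, some $\tau(t) \in \mb{R}$ with $f_t(n) = n^{i\tau(t)}$ for every $n \in \mb{N}$; equivalently,
$$
t g(n) - \tau(t)\log n \in 2\pi\mb{Z} \quad \text{for all } n \geq 2.
$$
Pick any two integers $n_1, n_2 \geq 2$ and eliminate $\tau(t)$ from the two congruences to obtain, for every $t \in \mb{R}$,
$$
t\bigl(g(n_1)\log n_2 - g(n_2)\log n_1\bigr) \in 2\pi\bigl(\mb{Z}\log n_1 + \mb{Z}\log n_2\bigr).
$$
The right side is a fixed countable subset of $\mb{R}$, while the left side is a linear function of the continuous parameter $t$; this can only hold for every $t \in \mb{R}$ if $g(n_1)\log n_2 = g(n_2)\log n_1$. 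Thus $n \mapsto g(n)/\log n$ is a constant $c \in \mb{R}$ on $\{n \geq 2\}$, and together with $g(1) = 0$ this gives $g(n) = c\log n$ for all $n \in \mb{N}$.

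The main obstacle is that the argument leans crucially on Klurman's theorem, itself a deep result requiring substantial multiplicative machinery. A more self-contained route --- closer to the original arguments of K\'atai and Wirsing --- would bypass Klurman by first establishing $B_g(X) \ll 1$ using the telescoping identity $g(n) = \sum_{m \leq n}(g(m) - g(m-1))$ together with Tur\'an--Kubilius and Cauchy--Schwarz, and then applying Hal\'asz's mean value theorem to each $f_t = e^{itg}$ to identify its mean value before rigidifying via additivity. Either route must grapple with the fact that the hypothesis controls only an $\ell^1$-average, so that arbitrarily bad behaviour on a density-zero exceptional set must be excluded by structural arguments rather than the hypothesis directly.
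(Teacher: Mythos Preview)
The paper does not prove this theorem; it is simply recorded (in Section~2) as a known result with citations to K\'atai and Wirsing, and then used as a black box in the proof of Corollary~\ref{cor:weakErd}. So there is no ``paper's proof'' to compare against.

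Your argument via Klurman's theorem is correct. The reduction to real-valued $g$, the passage to the unimodular multiplicative functions $f_t = e^{itg}$, the bound $|f_t(n)-f_t(n-1)| \leq |t|\,|g(n)-g(n-1)|$, and the countable-versus-continuum elimination of $\tau(t)$ all check out. There is no circularity: Klurman's proof rests on the Matom\"aki--Radziwi\l\l\ machinery and Tao's correlation estimates, not on K\'atai--Wirsing.

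That said, you are invoking a 2017 theorem --- itself substantially harder than the statement being proved --- to establish a result from the 1970s. The original proofs of K\'atai and Wirsing are direct and self-contained, working with the additive function itself (roughly along the lines you sketch in your final paragraph: telescoping, moment control, and mean-value analysis). Your route buys a very short proof at the cost of importing heavy external input; the original route is longer but elementary. Since the paper only needs the statement as a tool, either is acceptable, but you should be aware that what you have written is a reduction to a much deeper fact rather than an independent proof.
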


\section{The Matom\"{a}ki-Radziwi\l\l\ \  Theorem for Additive Functions: $\ell^1$ Variant} \label{sec:L1MR}
In this section, we prove Theorem \ref{thm:MRL1}.
We begin with the following simple observation, amounting to the fact that the mean value of an additive function changes little when passing from a long interval of length $\asymp X$ to a medium-sized one of length $X/(\log X)^{\delta}$, for $\delta > 0$ not too large. 
\begin{lem} \label{lem:longH}
Let $g: \mb{N} \ra \mb{C}$ be additive and let $X$ be large. Let $X/2 < x \leq X$, and let $X/(\log X)^{1/3} \leq h \leq X/3$. Then
$$
\frac{2}{X} \sum_{X/2 < n \leq X} g(n) = \frac{1}{h} \sum_{x-h \leq n \leq x} g(n) + O\left(\frac{B_g(X)}{(\log X)^{1/6}}\right).
$$
\end{lem}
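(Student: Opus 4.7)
The plan is to show that both the long and short averages are each close to $A_g(X)$, with errors of size $O(B_g(X)/(\log X)^{1/6})$; the result then follows by the triangle inequality.

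First I would handle the long average by applying Lemma \ref{lem:mean} at scales $X$ and $X/2$. Writing $\sum_{X/2 < n \leq X} g(n) = \sum_{n \leq X} g(n) - \sum_{n \leq X/2} g(n)$, we get
$$
\frac{2}{X}\sum_{X/2 < n \leq X} g(n) = 2A_g(X) - A_g(X/2) + O\left(\frac{B_g(X)}{\sqrt{\log X}}\right),
$$
and then Lemma \ref{lem:Ag} gives $A_g(X/2) = A_g(X) + O(B_g(X)/\sqrt{\log X})$, so the long average equals $A_g(X) + O(B_g(X)/\sqrt{\log X})$.

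Next I would treat the short average. Writing $\sum_{x-h \leq n \leq x} g(n) = \sum_{n \leq x} g(n) - \sum_{n \leq x-h-1} g(n)$ and applying Lemma \ref{lem:mean} at scales $x$ and $x-h-1$, both of which are of size $\asymp X$, I obtain
$$
\sum_{x-h \leq n \leq x} g(n) = x A_g(x) - (x-h-1)A_g(x-h-1) + O\!\left(\frac{X B_g(X)}{\sqrt{\log X}}\right).
$$
Since $X/6 \leq x-h \leq x \leq X$, iterating Lemma \ref{lem:Ag} a bounded number of times yields $A_g(x) = A_g(X) + O(B_g(X)/\sqrt{\log X})$ and likewise for $A_g(x-h-1)$ (using also monotonicity of $B_g$ in $X$). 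Substituting and collecting the $A_g(X)$ terms gives $h \cdot A_g(X) + O(X B_g(X)/\sqrt{\log X})$.

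Dividing by $h$ and using the hypothesis $h \geq X/(\log X)^{1/3}$ yields
$$
\frac{1}{h}\sum_{x-h \leq n \leq x} g(n) = A_g(X) + O\!\left(\frac{X}{h}\cdot\frac{B_g(X)}{\sqrt{\log X}}\right) = A_g(X) + O\!\left(\frac{B_g(X)}{(\log X)^{1/6}}\right).
$$
Combining with the estimate for the long average finishes the proof. There is no genuine obstacle here; the one point worth emphasizing is why the constraint $h \geq X/(\log X)^{1/3}$ is sharp for this argument: the error in replacing $A_g(x-h)$ by $A_g(X)$ is multiplied by $X/h$ when we normalize the short sum, and balancing $(X/h)/\sqrt{\log X}$ against $1/(\log X)^{1/6}$ forces precisely this lower bound on $h$.
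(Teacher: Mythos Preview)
Your proof is correct and follows essentially the same approach as the paper: both arguments show that the long and short averages are each equal to $A_g(X)$ up to an error of $O(B_g(X)/(\log X)^{1/6})$, by applying Lemma~\ref{lem:mean} at the relevant endpoints and then using Lemma~\ref{lem:Ag} to replace $A_g(X/2)$, $A_g(x)$, $A_g(x-h)$ by $A_g(X)$. The paper organizes the short-sum computation slightly differently---writing it as $A_g(x)+O\bigl(\tfrac{x}{h}|A_g(x)-A_g(x-h)|\bigr)$ before invoking Lemma~\ref{lem:Ag}---but the substance is identical.
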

\begin{proof}
Applying Lemma \ref{lem:mean} with $Y = X/2,X,x-h$ and $x$, we obtain
\begin{align*}
&\frac{2}{X}\sum_{X/2 < n \leq X} g(n) = 2A_g(X) - A_g(X/2) + O\left(\frac{B_g(X)}{\sqrt{\log X}}\right) \\
&\frac{1}{h}\sum_{x-h < n \leq x} g(n) = \frac{x}{h}A_g(x) - \left(\frac{x}{h}-1\right)A_g(x) + O\left(\frac{XB_g(X)}{h\sqrt{\log X}}\right).
\end{align*}
Since $X/(\log X)^{1/3} \leq h \leq X/2$, the error term in the second line is $\ll \frac{B_g(X)}{(\log X)^{1/6}}$. By Lemma \ref{lem:Ag},
$$
2A_g(X)-A_g(X/2) = A_g(X) + O(|A_g(X)-A_g(X/2)|) = A_g(X) + O\left(\frac{B_g(X)}{\sqrt{\log X}}\right)
$$
for the main term in the first equation, and also
$$
\frac{x}{h}|A_g(x)-A_g(x-h)| \ll (\log x)^{1/3} \cdot \frac{B_g(x)}{\sqrt{\log x}} \ll \frac{B_g(X)}{(\log X)^{1/6}},
$$
so that by a second application of Lemma \ref{lem:Ag},
$$
\frac{x}{h}A_g(x) - \left(\frac{x}{h}-1\right)A_g(x-h) = A_g(x) + O\left(\frac{x}{h}|A_g(x)-A_g(x-h)|\right) = A_g(X) + O\left(\frac{B_g(X)}{(\log X)^{1/6}}\right).
$$
%To conclude, we thus obtain
Combining these estimates, we may conclude that
$$
\left|\frac{2}{X}\sum_{X/2 < n \leq X} g(n)-\frac{1}{h} \sum_{x-h < n \leq x} g(n)\right| 
%= \left(2A_g(2x)-A_g(x)\right) - \left(\left(\frac{x}{h}+1\right) A_g(x+h)-A_g(x)\right) + O\left(\frac{B_g(2x)}{(\log x)^{1/6}}\right) 
\ll \frac{B_g(X)}{(\log X)^{1/6}},
$$
as claimed.
\end{proof}
In light of the above lemma, it suffices to prove the following: if $h' = X/(\log X)^{1/3}$ and $10 \leq h \leq h'$ then
$$
%D_g(h,X) := 
\frac{2}{X}\sum_{X/2<m \leq X}\left|\frac{1}{h}\sum_{m-h < n \leq m} g(n)-\frac{1}{h'} \sum_{m-h' < n \leq m} g(n)\right|  \ll B_g(X) \left(\sqrt{\frac{\log\log h}{\log h}} + (\log X)^{-1/800}\right).
$$
Splitting $g = \text{Re}(g) + i \text{Im}(g)$, and noting that both $\text{Re}(g)$ and $\text{Im}(g)$ are real-valued additive functions, we may assume that $g$ is itself real-valued, after which the general case will follow by the triangle inequality. \\
%We observe, for future reference, that if $\lambda \in \mb{R}$ then by comparing with an integral we obtain
%\begin{align}\label{eq:shiftbyL}
%D_g(h,X) \leq D_{g_{\lambda}}(h,X) + \frac{|\lambda|}{X}\int_X^{2X}\left|\frac{1}{h}\int_x^{x+h} \log y dy - \frac{1}{h'}\int_x^{x+h'} \log y dy \right| dx +O\left(|\lambda|\frac{1}{h}\right) = D_{g_{\lambda}}(h,X) + O\left(|\lambda|\left(\frac{1}{h}+\frac{h'}{X}\right)\right).
%\end{align}
Let $10 \leq h \leq X/3$, with $X$ large. Following \cite{MR}, fix $\eta \in (0,1/12)$, parameters $Q_1 = h$, $P_1 = (\log h)^{40/\eta}$, and define further parameters $P_j,Q_j$ by 
$$
P_j := \exp\left(j^{4j} (\log Q_1)^{j-1}\log P_1\right), \quad Q_j := \exp\left(j^{4j+2} (\log Q_1)^j\right),
$$
for all $j \leq J$, where $J$ is chosen maximally subject to $Q_J \leq \exp(\sqrt{\log X})$. We then define
$$
\mc{S} = \mc{S}_{X,P_1,Q_1} := \{n \leq X : \omega_{[P_j,Q_j]}(n) \geq 1 \text{ for all $1 \leq j \leq J$}\},
$$
where for any set $S \subset \mb{N}$ we write $\omega_S(n) := \sum_{p|n} 1_S(n)$. 
%Below, we fix $A > 0$ as large as desired, fix $\eta \in (0,1)$, put $B := (6A+2)/(1-6\eta)$ and set $P_1 = (\log h)^{2B}$ and $Q_1 = h$.
\begin{lem} \label{lem:passtoMR}
Let $g: \mb{N} \ra \mb{R}$ be an additive function. Let $10 \leq h \leq h'$, where $h' := \frac{X}{(\log X)^{1/3}}$ and $h \in \mb{Z}$, and $(\log X)^{-1/6} < t < 1$. Then
%Fix $\eta$, $P_1$ and $Q_1$ as above
%, select $P_1 = (\log h)^{40/\eta}$, $Q_1 = h$ 
%and define $\mc{S} := \mc{S}_{X,P_1,Q_1}$. Then
%for any $|\lambda| \leq XB_g(X)/h$,
\begin{align*}
&\frac{2}{X}\sum_{X/2< m \leq X} \left|\frac{1}{h'} \sum_{\substack{m-h' < n \leq m}} g(n) - \frac{1}{h} \sum_{\substack{m-h < n \leq m}} g(n)\right|  \\
&\ll \frac{B_g(X)}{t}\cdot \frac{1}{X} \int_{X/2}^X \left|\frac{1}{h'} \sum_{\substack{x-h' < n \leq x \\ n \in \mc{S}}} e(t\tilde{g}(n;X)) - \frac{1}{h} \sum_{\substack{x-h < n \leq x\\ n \in \mc{S}}} e(t\tilde{g}(n;X)) \right|  dx + B_g(X)\left(t + \frac{\log\log h}{t\log h}\right),
\end{align*}
where $\tilde{g}(n;X) := B_g(X)^{-1}(g(n) - A_g(X))$ for all $n \in \mb{N}$.
\end{lem}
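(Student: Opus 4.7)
The plan is to convert the additive-function short/long average difference into a corresponding exponential-sum difference by means of a uniform pointwise Taylor identity, with the Taylor error absorbed by Tur\'an--Kubilius and the passage to $\mc{S}$ absorbed by standard sieve bounds. Splitting $g$ into real and imaginary parts reduces matters to the case $g : \mb{N} \to \mb{R}$. Since $A_g(X)$ is constant in $m$, writing $G(n) := g(n) - A_g(X) = B_g(X)\tilde{g}(n;X)$ leaves the difference of short/long averages unchanged. The core input is the uniform identity
$$
y = \frac{e(ty) - 1}{2\pi i t} + R_t(y), \qquad |R_t(y)| \ll t y^2 \qquad (y \in \mb{R}),
$$
which follows from Taylor expansion when $|ty| \leq 1$ and from $|e(ty)-1|\leq 2$ when $|ty| > 1$ (noting that in the latter range $|y| + 1/(\pi t) \ll |y| \leq ty^2$). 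Applied with $y = \tilde{g}(n;X)$ and scaled by $B_g(X)$, this yields
$$
G(n) = \frac{B_g(X)}{2\pi i t}\bigl(e(t\tilde{g}(n;X)) - 1\bigr) + E(n), \qquad |E(n)| \ll \frac{t\,G(n)^2}{B_g(X)}.
$$

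Substituting into both the short- and long-window averages of $G$, the constant $-1$ contributes $1 - 1 = 0$ to the difference, so the LHS of the lemma equals
$$
\frac{B_g(X)}{2\pi t}\cdot\frac{2}{X}\sum_{X/2 < m \leq X}\Bigl|\frac{1}{h'}\sum_{m-h' < n \leq m} e(t\tilde{g}(n;X)) - \frac{1}{h}\sum_{m-h<n\leq m}e(t\tilde{g}(n;X))\Bigr| + \mathcal{E},
$$
where $\mathcal{E}$ is the analogous short/long average of $E(n)$. I next split the inner exponential sums according to $\mathbf{1}_{n \in \mc{S}}$. The $n \in \mc{S}$ contribution matches the integral on the right after a standard sum-to-integral passage: since $h \in \mb{Z}$, $x \mapsto \frac{1}{h}\sum_{x-h<n\leq x}\psi(n)$ is piecewise constant on $[m,m+1)$, and the minor discrepancy arising from $h' \notin \mb{Z}$ and from boundary intervals contributes $O(1/X)$, which is negligible.

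It remains to bound the $n \notin \mc{S}$ contribution and $\mathcal{E}$. For the former, $|e(\cdot)| = 1$ together with the triangle inequality and interchanging orders of summation gives
$$
\frac{2}{X}\sum_{X/2 < m \leq X}\Bigl(\frac{|\{m-h'<n\leq m:\,n\notin\mc{S}\}|}{h'} + \frac{|\{m-h<n\leq m:\,n\notin\mc{S}\}|}{h}\Bigr) \ll \frac{|\mc{S}^c \cap [1,X]|}{X} \ll \frac{\log\log h}{\log h},
$$
the last bound following from $\prod_{P_j \leq p \leq Q_j}(1-1/p) \asymp \log P_j/\log Q_j$ (Mertens' theorem) and the explicit choices of $P_j, Q_j$, which make $\sum_j \log P_j/\log Q_j \ll \log P_1/\log Q_1 \asymp \log\log h/\log h$. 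Multiplied by $B_g(X)/(2\pi t)$, this yields the $B_g(X)\log\log h/(t\log h)$ error term. For $\mathcal{E}$, the triangle inequality, swapping sums, and the Tur\'an--Kubilius inequality (Lemma \ref{lem:TK}) give
$$
\mathcal{E} \ll \frac{1}{X}\sum_{n \in [X/2-h',\,X]} |E(n)| \ll \frac{t}{X\,B_g(X)}\sum_{n \leq X} G(n)^2 \ll t\,B_g(X),
$$
matching the remaining error term.

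The only real subtlety is securing the $|R_t(y)| \ll ty^2$ bound uniformly over all $y \in \mb{R}$: this uniform control is what allows the second-moment estimate from Tur\'an--Kubilius to handle $\mathcal{E}$ wholesale, with no need to separately truncate the contribution from integers $n$ at which $g(n)$ is exceptionally large (e.g.\ multiples of primes with extreme $|g(p)|$). Everything else is bookkeeping.
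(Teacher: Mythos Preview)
Your proof is correct and follows essentially the same approach as the paper: Taylor-expand to convert the difference of short/long averages of $\tilde{g}$ into a difference of exponential sums, control the quadratic remainder via Tur\'an--Kubilius, remove $\mc{S}^c$ by the sieve bound, and pass from the discrete to the integral average. The only cosmetic difference is that you apply the Taylor identity pointwise (via $y = \tfrac{e(ty)-1}{2\pi i t} + O(ty^2)$) whereas the paper Taylor-expands the averaged function $u \mapsto S_H(u;x)$ to second order with integral remainder; these are equivalent, and your pointwise formulation is arguably cleaner.
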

\begin{proof}
%By the triangle inequality, we may bound the expression on the LHS by
%\begin{align*}
%&\ll \frac{2}{X} \int_{X/2}^{X} \left|\frac{1}{h'} \sum_{x-h' < n \leq x} g(n)1_{n \in \mc{S}} - \frac{1}{h} \sum_{x-h < n \leq x} g(n)1_{n \in \mc{S}} \right| dx \\
%&+ \frac{2}{X} \int_{X/2}^{X} \left|\frac{1}{h'} \sum_{x-h' < n \leq x} g(n)1_{n \notin \mc{S}} - \frac{1}{h} \sum_{x-h < n \leq x} g(n)1_{n \notin \mc{S}} \right| dx .
%\end{align*}
%We begin by treating the expressions supported on $\mc{S}$. 
In view of Lemma \ref{lem:ptwiseBdg}, at the cost of an error term of size $|g(\lceil x-h'\rceil)|/h' \ll B_g(X)X^{-1/4}$, we may assume that both $h,h' \in \mb{Z}$ (else replace $h'$ by $\llf h'\rrf$). Given $u \in [0,1]$, $x \in [X/2,X] \cap \mb{Z}$ and an integer $1 \leq H \leq h'$, define
$$
S_{H}(u;x) := \frac{1}{H}\sum_{\substack{x-H < n \leq x }} e(u\tilde{g}(n;X)),
% \quad 
% S_h(t;x) := \frac{1}{h} \sum_{\substack{x-h < n \leq x \\ n \in \mc{S}}} e(t\tilde{g}_X(n)).
$$
which is clearly an analytic function of $u$. Fix $x \in [X/2,X] \cap \mb{Z}$, and observe that $S_{h'}(0;x) = 1 = S_h(0;x)$. 
%Clearly, $S_{h'}(\cdot;x)$ and $S_h(\cdot;x)$ are analytic functions. 
By Taylor expansion,
$$
S_{h'}(t;x)-S_h(t;x) = t(S_{h'}'(0;x)-S_h'(0;x)) + \frac{1}{2}\int_0^t (S_{h'}''(u;x)-S_h''(u;x)) u du,
$$
wherein we have
\begin{align*}
S_{h'}'(0;x) - S_h'(0;x) &= \frac{1}{B_g(X)} \left(\frac{1}{h'} \sum_{\substack{x-h' < n \leq x }} (g(n)-A_g(X)) - \frac{1}{h} \sum_{\substack{x-h < n \leq x }}(g(n)-A_g(X))\right) \\
&= \frac{1}{B_g(X)} \left(\frac{1}{h'} \sum_{\substack{x-h' < n \leq x }} g(n) - \frac{1}{h} \sum_{\substack{x-h < n \leq x }} g(n)\right).
\end{align*}
Solving for $S_{h'}'(0;x)-S_h'(0;x)$, taking absolute values and then averaging over $x \in [X/2,X] \cap \mb{Z}$, we get 
\begin{align*}
&\frac{2}{X}\sum_{X/2<m \leq X} \left|\frac{1}{h'}\sum_{\substack{m-h' < n \leq m }} g(n) - \frac{1}{h} \sum_{\substack{m-h < n \leq m }} g(n)\right|  \\
&\ll B_g(X) t^{-1} \cdot \frac{1}{X} \sum_{X/2<m\leq X} \left|\frac{1}{h'} \sum_{\substack{m-h' < n \leq m }} e(t\tilde{g}(n;X)) - \frac{1}{h} \sum_{\substack{m-h < n \leq m }} e(t\tilde{g}(n;X))\right| \\
&+ B_g(X) t^{-1}\cdot \frac{t^2 }{X} \sum_{X/2 < m \leq X}\max_{0 \leq u \leq t} \left|\frac{1}{h'} \sum_{\substack{m-h' < n \leq m }} \tilde{g}(n;X)^2 e(u\tilde{g}(n;X)) - \frac{1}{h} \sum_{\substack{m-h < n \leq m }} \tilde{g}(n;X)^2 e(u\tilde{g}(n;X))\right|.
% + B_g(X)/h,
\end{align*}
Since $g$ is real-valued by assumption, $|e(u\tilde{g}(n;X))| = 1$ for all $n$. Thus, applying the triangle inequality and Lemma \ref{lem:TK}, we may bound the last expression above by
\begin{align*}
&\ll t B_g(X) \frac{1}{X} \sum_{X/2 < m \leq X} \sum_{H \in \{h,h'\}} \frac{1}{H} \sum_{m-H < n \leq m} \left(\frac{g(n)-A_g(X)}{B_g(X)}\right)^2 \\
&\ll tB_g(X) \cdot \frac{1}{X} \sum_{X/2 < n \leq X} \left(\frac{g(n)-A_g(X)}{B_g(X)}\right)^2 \cdot \sum_{H \in \{h,h'\}} \frac{1}{H}\sum_{X/2 < m \leq X} 1_{[n,n+H]}(m) \\
&\ll tB_g(X).
\end{align*}
%This dispenses with the contribution from $n \in \mc{S}$, so it remains to deal with $n \notin \mc{S}$. Applying the triangle inequality in a similar way, we may bound this term as
We now split 
$$
S_H(t;x) = \frac{1}{H}\sum_{\substack{x-H < n \leq x \\ n\in \mc{S}}} e(u\tilde{g}(n;X)) + \frac{1}{H}\sum_{\substack{x-H < n \leq x \\ n\notin \mc{S}}} e(u\tilde{g}(n;X)) =: S_H^{(\mc{S})}(t;x) + S_H^{(\mc{S}^c)}(t;x),
$$
with $H\in \{h,h'\}$. By the triangle inequality, we have
$$
\frac{2}{X}\sum_{X/2 < x \leq X} |S_H^{(\mc{S}^c)}(t;x)| \leq \frac{1}{HX}\sum_{X/2 < x \leq X} |\mc{S}^c \cap (x-H,x]| \leq \frac{1}{X}\sum_{X/3 < n \leq X}1_{\mc{S}^c}(n).
$$
Since $P_J \leq \exp(\sqrt{\log X})$, the union bound and the fundamental lemma of the sieve yield
\begin{align}
%We note first of all that by the union bound,
%$$
|\mc{S}^c \cap [X/3,X]| &\ll X\sum_{1 \leq j \leq J}\prod_{P_j \leq p \leq Q_j} \left(1-\frac{1}{p}\right) \ll X\sum_{1 \leq j \leq J}\frac{\log P_j}{\log Q_j} = X\frac{\log P_1}{\log Q_1} \sum_{1\leq j \leq J} \frac{1}{j^2} \ll X\frac{\log\log h}{\log h}. \label{eq:notinS}
% B_g(X) \ll \left(\frac{\log P_1}{\log Q_1} \sum_{1 \leq j \leq J} \frac{1}{j^2}\right)^{1/2}B_g(X) \ll_A \sqrt{\frac{\log\log h}{\log h}} B_g(X),
\end{align}
We thus find by the triangle inequality that
$$
\frac{2}{X}\sum_{X/2 < n \leq X} \left|S_h(t;n) - S_{h'}(t;n)\right| \ll \frac{2}{X}\sum_{X/2 < n \leq X} \left|S_h^{(\mc{S})}(t;n) - S_{h'}^{(\mc{S})}(t;n)\right| + \frac{\log\log h}{\log h}.
$$
Finally, if $n \in [X/2,X] \cap \mb{Z}$ and $x \in [n,n+1)$ then $S_H^{(\mc{S})}(t;x) = S_H^{(\mc{S})}(t; n) + O(1/H)$, and thus
$$
\frac{2}{X}\sum_{X/2 < n \leq X} |S_h^{(\mc{S})}(t;x) - S_{h'}^{(\mc{S})}(t;x)| \leq \frac{2}{X}\int_{X/2}^X |S_h^{(\mc{S})}(t;x) - S_{h'}^{(\mc{S})}(t;x)| dx + O(1/h).
$$ 
Combined with the preceding estimates, we obtain
\begin{align*}
&\frac{2}{X}\sum_{X/2 < m \leq X} \left|\frac{1}{h'}\sum_{\substack{m-h' < n \leq m }} g(n) - \frac{1}{h} \sum_{\substack{m-h < n \leq m }} g(n)\right| dx \\
&\ll t^{-1} B_g(X)\left(\frac{2}{X}\int_{X/2}^X \left|S_h^{(\mc{S})}(t;x) - S_{h'}^{(\mc{S})}(t;x)\right| dx + \frac{\log\log h}{\log h}\right) + tB_g(X),
\end{align*}
which implies the claim.
\end{proof}
Let $\mb{U} := \{z \in \mb{C} : |z| \leq 1\}$. In what follows, given multiplicative functions $f,g: \mb{N} \ra \mb{U}$ and parameters $1 \leq T \leq X$, we introduce the pretentious distance of Granville and Soundararajan:
\begin{align*}
\mb{D}(f,g; X)^2 &:= \sum_{p \leq X} \frac{1-\text{Re}(f(p)\bar{g}(p))}{p},\\
M_f(X;T) &:= \min_{|\lambda| \leq T} \mb{D}(f,n^{i\lambda}; X)^2.
\end{align*}
For multiplicative functions $f,g,h$ taking values in $\mb{U}$, $\mb{D}$ satisfies the triangle inequality (see e.g., \cite[Lem 3.1]{GSPret})
\begin{equation}\label{eq:trieqD}
\mb{D}(f,h;X) \leq \mb{D}(f,g;X) + \mb{D}(g,h;X).
\end{equation}
%breaking ties by choosing $\lambda_{t,Y}$ of minimal absolute value if necessary. 
Define the multiplicative function
$$
G_{t,X}(n) := e(tg(n)/B_g(X)) = e(t \tilde{g}(n;X)) e(tA_g(X)/B_g(X)).
$$ 
For each $t \in [0,1]$, select $\lambda_{t,X} \in [-X,X]$ such that $M_{G_{t,X}}(X;X) = \mb{D}(G_{t,X},n^{i\lambda_{t,X}}; X)^2$. 
\begin{lem} \label{lem:controlMT}
Let 
%$\delta \in (0,1/2)$ and 
$0 < t \leq 1/100$ be sufficiently small and let $(\log X)^{-1/100} < \e < 1/3$. Then either: 
\begin{enumerate}[(i)]
\item $M_{G_{t,X}}(X;X) \geq 4\log(1/\e)$, or else
%$\max_{\sqrt{X} < Y \leq X} M(G_{t,Y};Y,Y) \geq 4\log(1/\e)$, or else\\
\item $|\lambda_{t,X}| = O(1)$.
\end{enumerate}
\end{lem}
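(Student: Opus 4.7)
The plan is to argue by contrapositive: assume we are in the situation where case (i) fails, i.e.\ $M_{G_{t,X}}(X;X) < 4\log(1/\e)$, and deduce that $|\lambda_{t,X}| = O(1)$.

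The first step is to exploit the smallness of $t$ by controlling how far $G_{t,X}$ is from the constant function $1$. Since $g$ is real-valued and $B_g(X)^2 \geq \sum_{p \leq X} g(p)^2/p$, the elementary estimate $1-\cos\theta \leq \theta^2/2$ gives
\begin{equation*}
\mb{D}(G_{t,X}, 1; X)^2 = \sum_{p \leq X} \frac{1 - \cos(2\pi t g(p)/B_g(X))}{p} \leq \frac{2\pi^2 t^2}{B_g(X)^2} \sum_{p \leq X} \frac{g(p)^2}{p} \leq 2\pi^2 t^2.
\end{equation*}
This in particular means that $G_{t,X}$ is quite close to the trivial character $n^{i\cdot 0} = 1$.

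The second step is to compare $n^{i\lambda_{t,X}}$ with $1$ using the triangle inequality \eqref{eq:trieqD} and the assumption on $M_{G_{t,X}}$:
\begin{equation*}
\mb{D}(n^{i\lambda_{t,X}}, 1; X) \leq \mb{D}(n^{i\lambda_{t,X}}, G_{t,X}; X) + \mb{D}(G_{t,X}, 1; X) < 2\sqrt{\log(1/\e)} + \sqrt{2}\pi t.
\end{equation*}
Squaring and using that $\log(1/\e) > \log 3$ absorbs the cross term, one obtains a bound of the form $\mb{D}(n^{i\lambda_{t,X}}, 1; X)^2 \leq 5\log(1/\e)$, say, for $t \leq 1/100$. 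Because $\e > (\log X)^{-1/100}$, this upper bound is at most $(1/20)\log\log X$.

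The third step is to contrast this with the classical lower bound
\begin{equation*}
\mb{D}(n^{i\lambda}, 1; X)^2 = \log\log X + O(1), \qquad \text{uniformly for } 1 \leq |\lambda| \leq X,
\end{equation*}
which follows from Mertens' theorem together with PNT-based estimates on $\sum_{p \leq X} p^{i\lambda}/p$ (standard in the Granville-Soundararajan pretentious framework). Since $(1/20)\log\log X$ is strictly smaller than this quantity for $X$ large, we conclude that necessarily $|\lambda_{t,X}| < 1$, which in particular gives $|\lambda_{t,X}| = O(1)$.

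The main obstacle is making sure the classical lower bound is valid with good enough constants across the entire admissible range $1 \leq |\lambda| \leq X$; for bounded $|\lambda|$ this follows immediately from $|\zeta(1+i\lambda)|\asymp 1$, while for larger $|\lambda|$ one shows by partial summation from PNT that $\sum_{p\leq X} p^{i\lambda}/p = O(1/|\lambda|) + o(\log\log X)$, so the bulk $\log\log X$ of $\sum_{p \leq X} 1/p$ in $\mb{D}(n^{i\lambda},1;X)^2$ survives.
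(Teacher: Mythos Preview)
Your overall strategy is correct and in fact more direct than the paper's. The paper does not compute $\mb{D}(G_{t,X},1;X)$ directly; instead it observes (via Tur\'{a}n--Kubilius and Taylor expansion) that $\sum_{n\leq X} G_{t,X}(n) = (1+O(\sqrt{t}))X$, feeds this into Hal\'{a}sz's theorem to produce a bounded $\tilde{\lambda}_{t,X}$ with $\mb{D}(G_{t,X},n^{i\tilde{\lambda}_{t,X}};X)\ll 1$, and only then runs the triangle-inequality/zero-free-region comparison. Your elementary bound $\mb{D}(G_{t,X},1;X)^2 \leq 2\pi^2 t^2$ via $1-\cos\theta\leq\theta^2/2$ short-circuits all of this by taking $\tilde{\lambda}_{t,X}=0$ from the outset, which is a genuine simplification.

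There is, however, a gap in your third step. The assertion $\mb{D}(n^{i\lambda},1;X)^2 = \log\log X + O(1)$ uniformly for $1\leq|\lambda|\leq X$ is too strong, and ``partial summation from PNT'' does not deliver it: when you integrate by parts against the PNT error term you pick up a factor of $|\lambda|$, which can be as large as $X$ and swamps any saving. What is true, and what the paper invokes for exactly this purpose, is the Vinogradov--Korobov bound $|\zeta(1+it)|\ll(\log|t|)^{2/3}$, which gives $\mb{D}(n^{i\lambda},1;X)^2 \geq 0.33\log\log X$ for (say) $|\lambda|\geq 100$; combined with your correct observation that $|\zeta(1+i\lambda)|\asymp 1$ handles $1\leq|\lambda|\leq 100$, this suffices since $0.33 > 1/20$. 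With this repair your argument is complete.
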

\begin{proof}
Assume i) fails. Then by assumption, $\mb{D}(G_{t,X}, n^{i\lambda_{t,X}};X)^2 \leq 4 \log(1/\e)$.
% and all $Y \in [\sqrt{X},X]$. 
We claim that there is also $\tilde{\lambda}_{t,X} = O(1)$ such that 
\begin{align}\label{eq:pret}
\mb{D}(G_{t,X}, n^{i\tilde{\lambda}_{t,X}};X) \ll 1.
\end{align} 
To see that this is sufficient to prove (ii), we apply \eqref{eq:trieqD} to obtain
$$
\mb{D}(n^{i\lambda_{t,X}},n^{i\tilde{\lambda}_{t,X}};X) \leq 1+2\sqrt{\log(1/\e)} \leq 3\sqrt{\log(1/\e)}.
$$
%As above, we obtain that $|\lambda_{t,X} - \tilde{\lambda}_{t,X}| < 1$, which implies that $|\lambda_{t,X}| = O(1)$ as well. \\
%Note, then, that if $B_g(X) \gg 1$ then putting $\lambda = \lambda_{t,X}$, we have
%$$
%\frac{1}{X} \sum_{n \leq X} (\tilde{g}_{\lambda})_X(n)^2 \ll \frac{1}{B_g(X)^2} \left(\frac{1}{X} \sum_{n \leq X} (g(n)-A_g(X))^2 + \frac{\lambda^2}{X} \sum_{n \leq X} \left(\log n - \sum_{p \leq X} \frac{\log p}{p}\right)^2 \ll \frac{1}{B_g(X)^2}\left(B_g(X)^2 + 1\right) \ll 1.
%$$
Now, if $|\lambda_{t,X} - \tilde{\lambda}_{t,X}| \geq 100$ then as $|\lambda_{t,X}|,|\tilde{\lambda}_{t,X}| \leq X$ the Vinogradov-Korobov zero-free region for $\zeta$ (see e.g., \cite[(1.12)]{MRT}) gives
$$
\mb{D}(n^{i\lambda_{t,X}}, n^{i\tilde{\lambda}_{t,X}}; X)^2 = \log\log X - \log |\zeta(1+1/\log X + i(\lambda_{t,X} - \tilde{\lambda}_{t,X}))| + O(1) \geq 0.33 \log\log X,
$$
which is a contradiction given $\e^{-1} \leq (\log X)^{1/100}$. It follows that 
$$
|\lambda_{t,X}| \leq |\tilde{\lambda}_{t,X}| + 100 = O(1),
$$
as required. \\
It thus remains to prove that \eqref{eq:pret} holds. By Lemma \ref{lem:TK}, we obtain
$$
|\{n \leq X : |\tilde{g}(n;X)| > t^{-1/2} \}| \leq t\sum_{n \leq X} \tilde{g}(n;X)^2 \ll tX.
$$
It follows from Taylor expansion that
$$
\sum_{n \leq X} e(t\tilde{g}(n;X)) = \sum_{n \leq X}\left(1+O(\sqrt{t})\right) + O(tX) = (1+O(\sqrt{t}))X.
$$
On the other hand, by Hal\'{a}sz' theorem in the form of Granville and Soundararajan \cite[Thm. 1]{GSDec},
$$
\left|\sum_{n \leq X} e(t\tilde{g}(n;X))\right| = \left|\sum_{n \leq X} G_{t,X}(n)\right| \ll  M_{G_{t,X}}(X; U)e^{-M_{G_{t,X}}(X; U)} X  + \frac{X}{U},
$$
where $1 \leq U \leq \log X$ is a parameter of our choice.
% $$
% M(G_t;X,U) := \min_{|u| \leq U} \mb{D}(G_t,n^{iu}; X)^2.
% $$
If $U$ is a suitably large absolute constant and $t$ is sufficiently small in an absolute sense, we obtain $M_{G_{t,X}}(X; U) \ll 1$, and therefore there is a $\tilde{\lambda}_{t,X} \in [-U,U]$ (thus of size $O(1)$) such that 
$$
\mb{D}(G_{t,X},n^{i\tilde{\lambda}_{t,X}};X) \ll 1,
$$
as claimed.
%which implies that $|\lambda_{t,X}| \leq 3\sqrt{X}$ for $X$ large enough, as required. \\
\end{proof}

\begin{lem} \label{lem:MRapp}
Let $g: \mb{N} \ra \mb{R}$ be an additive function.
% and assume that $B_g(y) \ra \infty$ as $y \ra \infty$. 
Let $X \geq 3$ be large, $(\log X)^{-1/6} < t \leq 1/100$ be small and let $10 \leq h_1 \leq h_2$ where $h_2 = X/(\log X)^{1/3}$. Then
%%Given $\lambda \in [-t^{-1}X/h,t^{-1}X/h]$, define
%$$
%M_g(X;\lambda,t) := \sum_{p \leq X} \frac{\min\{t^2 (\tilde{g}_X(p)-\lambda \log p)^2, 1\}}{p}.
%$$
%Then we have
\begin{align} \label{eq:Ssum}
&\frac{2}{X}\int_{X/2}^{X} \left|\frac{1}{h_1}\sum_{\substack{x-h_1 < n \leq x \\ n \in \mc{S}}} G_{t,X}(n) - \frac{1}{h_2}\sum_{\substack{x-h_2 < n \leq x \\ n \in \mc{S}}} G_{t,X}(n) \right| dx \ll  B_g(X)\left( \frac{\log\log h_1}{\log h_1} + (\log X)^{-1/400}\right).
\end{align}
\end{lem}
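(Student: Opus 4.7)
The plan is to deduce the bound from the Matom\"aki--Radziwi\l\l{} theorem applied to the $1$-bounded multiplicative function $G_{t,X}$, together with a sieve step that removes the restriction $n \in \mc{S}$. I would first remove this restriction in both the short and long averages: since $|G_{t,X}(n)| = 1$, the triangle inequality bounds the contribution from $n \notin \mc{S}$ by $|\mc{S}^c \cap (x-H,x]|/H$ for each $H \in \{h_1, h_2\}$, and Fubini combined with the sieve estimate \eqref{eq:notinS} gives
$$
\frac{1}{X} \int_{X/2}^X \frac{|\mc{S}^c \cap (x-H,x]|}{H}\, dx \ll \frac{|\mc{S}^c \cap (X/3, X]|}{X} \ll \frac{\log\log h_1}{\log h_1},
$$
which is absorbed by the right-hand side of the claimed bound. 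This reduces matters to the analogous unrestricted $\ell^1$ discrepancy.

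Next, I would apply the Matom\"aki--Radziwi\l\l{} theorem to the $1$-bounded multiplicative function $G_{t,X}$. Its standard mean-square form yields
$$
\frac{2}{X} \int_{X/2}^X \left| \frac{1}{h_1} \sum_{x-h_1 < n \leq x} G_{t,X}(n) - \frac{1}{h_2} \sum_{x-h_2 < n \leq x} G_{t,X}(n) \right|^2 dx \ll \frac{\log\log h_1}{\log h_1} + (\log X)^{-c}
$$
for some absolute $c > 0$. Calling the integrand $|A(x)|^2$, a Cauchy--Schwarz step using the pointwise bound $|A(x)| \leq 2$ converts this into an $\ell^1$ estimate of order $\sqrt{\log\log h_1 / \log h_1} + (\log X)^{-c/2}$.

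The main obstacle is closing the gap between this Cauchy--Schwarz estimate, which carries a square-root loss, and the target $B_g(X)(\log\log h_1 / \log h_1 + (\log X)^{-1/400})$. When $B_g(X) \geq \sqrt{\log h_1/\log\log h_1}$, the factor $B_g(X)$ on the right-hand side already absorbs this loss. For smaller $B_g(X)$, I would exploit the pretentious structure of $G_{t,X}$ via Lemma \ref{lem:controlMT}: either $|\lambda_{t,X}| = O(1)$, in which case $G_{t,X}$ is close in pretentious distance to $n^{i\lambda_{t,X}}$ and a sharpened MR estimate (for example the divisor-bounded variant Theorem \ref{thm:MRDB}) gives a direct $\ell^1$ bound of order $\log\log h_1/\log h_1$; or else $M_{G_{t,X}}(X;X)$ is very large, in which case Hal\'asz's theorem forces the long average of $G_{t,X}$ to be pointwise extremely small, and a standard argument removing an exceptional set of $x$ forces the typical short averages to be small as well. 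Merging the two regimes of $B_g(X)$ into a single bound of the claimed shape is the most delicate step, but in either case the exponent $1/400$ leaves ample room compared to the $(\log X)^{-c/2}$ losses accumulated above.
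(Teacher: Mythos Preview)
The factor $B_g(X)$ on the right-hand side of the lemma is a slip in the statement: the paper's own proof establishes $\ll \frac{\log\log h_1}{\log h_1} + (\log X)^{-1/400}$ with no $B_g(X)$, and that is precisely what is needed when the lemma is fed into Lemma~\ref{lem:passtoMR} (which already carries the prefactor $B_g(X)/t$) to deduce Theorem~\ref{thm:MRL1}. Your plan to let a large $B_g(X)$ absorb the Cauchy--Schwarz square-root loss therefore rests on taking a typo at face value; the bound must be proved uniformly in $g$, without the $B_g(X)$.

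Once that is corrected, your outline has the right ingredients but the case analysis is misplaced. The dichotomy of Lemma~\ref{lem:controlMT} is not a fallback for small $B_g(X)$; it is the \emph{primary} split and it does not depend on $g$ at all. The paper fixes $\e = (\log X)^{-1/100}$ and argues as follows. In the non-pretentious case $M_{G_{t,X}}(X;X) \geq 4\log(1/\e)$, each of the $h_1$- and $h_2$-sums is \emph{individually} small in $\ell^2$ by a Hal\'asz-type short-interval estimate (\cite[Thm.~A.2]{MRT}), and Cauchy--Schwarz then gives $\ll \e^2\sqrt{\log(1/\e)} + (\log h_1)^{-1} + (\log X)^{-1/200}$. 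In the pretentious case $|\lambda_{t,X}| = O(1)$, the twist $\frac{1}{h_j}\int_{x-h_j}^x u^{i\lambda_{t,X}}\,du \approx x^{i\lambda_{t,X}}$ is common to both $j$, so one adds and subtracts the twisted long sum and bounds each discrepancy $\mc{T}_j$ via Cauchy--Schwarz plus a refined $\ell^2$ theorem (\cite[Thm.~9.2(ii)]{MRII} for $h_1$, \cite[Thm.~1.6]{MRAP} for $h_2$); with the $\mc{S}$-restriction kept in place these give $\ell^2$ savings of order $P_1^{-c}$ and $(\log X)^{-1/200}$, so after the square root one has $(\log h_1)^{-1} + (\log X)^{-1/400}$ rather than the weaker $\sqrt{\log\log h_1/\log h_1}$ you obtain. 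The term $\frac{\log\log h_1}{\log h_1}$ enters only from the sieve error for $n \notin \mc{S}$, which is handled \emph{outside} the Cauchy--Schwarz step and hence is not square-rooted. Your ``standard MR'' display in step~2 is not available for complex-valued $G_{t,X}$ without first controlling the twist, and that control is exactly what the dichotomy supplies.
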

\begin{proof}
Set $\e = (\log X)^{-1/100}$. 
%By Lemma \ref{lem:passtoMR}, it thus suffices to estimate
If $M_{G_{t,X}}(X;X) \geq 4\log(1/\e)$ then by the triangle inequality, Cauchy-Schwarz and \cite[Theorem A.2]{MRT}, the LHS of \eqref{eq:Ssum} is 
\begin{align*}
&\ll \sum_{j = 1,2} \left(\frac{1}{X}\int_{X/2}^{X} \left|\frac{1}{h_j}\sum_{\substack{x-h_j < n \leq x \\ n \in \mc{S}}} G_{t,X}(n) \right|^2 dx\right)^{1/2} \\
%&+ \frac{\log P_1}{\log Q_1} + \frac{1}{X}\left|\sum_{X<n \leq 2X} G_t(n)\right| \\
&\ll \exp\left(-\frac{1}{2}M_{G_{t,X}}(X;X)\right)M_{G_{t,X}}(X;X)^{1/2} + \frac{(\log h_1)^{1/6}}{P_1^{1/12-\eta/2}} + (\log X)^{-1/200} \\
&\ll \e^2 (\log(1/\e))^{1/2} + (\log h_1)^{-1} +(\log X)^{-1/200} \\
&\ll (\log h_1)^{-1} +(\log X)^{-1/200}.
\end{align*}
Next, assume that $M_{G_{t,X}}(X;X) < 4\log(1/\e)$. By Lemma \ref{lem:controlMT} we have $\lambda_{t,X} = O(1)$, so that 
\begin{align*}
&\frac{1}{h}\int_{x-h}^{x} u^{i\lambda_{t,X}} du = x^{i\lambda_{t,X}} \left(\frac{1-(1-h/x)^{1+i\lambda_{t,X}}}{(1+i\lambda_{t,X})h/x}\right) = x^{i\lambda_{t,X}} \left(1+O\left(|\lambda_{t,X}|\frac{h}{X}\right)\right) = x^{i\lambda_{t,X}}\left(1+O\left(\frac{h}{X}\right)\right),
%&\sum_{\substack{x-h < n \leq x \\ n \in \mc{S}}} G_{t,X}(n)n^{-i\lambda_{t,X}} = x^{-i\lambda_{t,X}} \sum_{\substack{x-h < n \leq x \\ n \in \mc{S}}} e(tg(n))\left(1+O\left(|\lambda_{t,X}| \frac{h}{X}\right)\right) = x^{-i\lambda_{t,X}} \sum_{\substack{x-h < n \leq x \\ n \in \mc{S}}} G_{t,X}(n) + O\left(\frac{h}{X}\right).
\end{align*}
and thus for each $x \in [X/2,X]$ and $j = 1,2$,
\begin{equation} \label{eq:rightMT}
x^{i\lambda_{t.X}} \frac{2}{X} \sum_{\ss{ X/2 < n \leq X \\ n \in \mc{S}}} G_{t,X}(n) = \frac{1}{h_j} \int_{x-h_j}^x u^{i\lambda_{t,X}} du \cdot \frac{2}{X}\sum_{\ss{X/2 < n \leq X \\ n \in \mc{S}}} G_{t,X}(n) + O(h_j/X).
\end{equation}
Reinstating the $n \notin \mc{S}$, we also note the bound
\begin{align*}
&\frac{2}{X}\int_{X/2}^X \left|\frac{1}{h_j} \sum_{\ss{x-h_j < n \leq x \\ n \in \mc{S}}} G_{t,X}(n) - x^{i\lambda_{t.X}} \frac{2}{X} \sum_{\ss{ X/2 < n \leq X \\ n \in \mc{S}}} G_{t,X}(n)\right| dx \\
&\ll \frac{2}{X}\int_{X/2}^X \left|\frac{1}{h_j} \sum_{\ss{x-h_j < n \leq x }} G_{t,X}(n) - x^{i\lambda_{t.X}} \frac{2}{X} \sum_{\ss{ X/2 < n \leq X }} G_{t,X}(n)\right| dx + \frac{\log\log h_1}{\log h_1},
\end{align*}
which follows by using the arguments surrounding \eqref{eq:notinS}. \\
Adding and subtracting the expression on the LHS of \eqref{eq:rightMT} inside the absolute values bars in \eqref{eq:Ssum}, we obtain the upper bound
\[
\ll \mc{T}_1 + \mc{T}_2 + h_2/X
\]
where for $j = 1,2$ we set
$$
\mc{T}_j := \frac{1}{X}\int_{X/2}^{X} \left|\left(\frac{1}{h_j}\int_{x-h_j}^x u^{i\lambda_{t,X}} du\right) \frac{1}{X}\sum_{\substack{X/2 < n \leq X \\ n \in \mc{S}}} G_{t,X}(n)n^{-i\lambda_{t,X}} - \frac{1}{h_j}\sum_{\substack{x-h_j < n \leq x \\ n \in \mc{S}}} G_{t,X}(n) \right|.
$$
If $j = 2$ then we also have
$$
\mc{T}_j \ll \frac{2}{X}\int_{X/2}^X \left|\frac{1}{h_2} \sum_{\ss{x-h_2 < n \leq x }} G_{t,X}(n) - \left(\frac{1}{h_2}\int_{x-h_2}^x u^{i\lambda_{t,X}} du\right)\frac{2}{X} \sum_{\ss{ X/2 < n \leq X }} G_{t,X}(n)\right| dx + \frac{\log\log h_1}{\log h_1},
$$
and so by Cauchy-Schwarz and \cite[Theorem 1.6]{MRAP} (taking $Q = 1$ and $\e = (\log X)^{-1/200}$ there), we have
\begin{align*}
\mc{T}_2 &\ll \left(\frac{2}{X}\int_{X/2}^{X} \left|\left(\frac{1}{h_2}\int_{x-h_2}^x u^{i\lambda_{t,X}} du\right) \frac{1}{X}\sum_{\substack{X/2 < n \leq X }} G_{t,X}(n)n^{-i\lambda_{t,X}} - \frac{1}{h_j}\sum_{\substack{x-h_j < n \leq x }} G_{t,X}(n) \right|^2 dx\right)^{\frac 12} + \frac{\log\log h_1}{\log h_1} \\
&\ll (\log X)^{-1/400} + \frac{\log\log h_1}{\log h_1}.
\end{align*}
When $h_1 > \sqrt{X}$ we obtain the same bound $\mc{T}_1 \ll (\log X)^{-1/400} + \frac{\log\log h_1}{\log h_1}$ as well.\\
Thus, assume that $10 \leq h_1 \leq \sqrt{X}$. Combining Cauchy-Schwarz with \cite[Theorem 9.2(ii)]{MRII} (taking $\delta = (\log h_1)^{1/3}P_1^{-1/6+\eta}$, $\nu_1 = 1/20$ and $\nu_2 = 1/12$, there), we then get
$$
\mc{T}_1 \ll \left(\frac{(\log h_1)^{4/3}}{P_1^{1/12-\eta}} + (\log X)^{-1/200}\right)^{1/2} \ll (\log h_1)^{-1} + (\log X)^{-1/400}.
$$
%
%
%
%
%; if instead $h_1 > \sqrt{X}$, and similarly for $h_2$ we reinstate $n \notin \mc{S}$ at the cost of $X \log\log h/\log h$, and then apply \cite[Theorem 1.6]{MRAP} (taking $Q = 1$ and $\e = (\log X)^{-1/200}$ there). This yields
%\begin{align*}
%&\ll \left(\frac{(\log h_1)^{1/3}}{P_1^{1/6-\eta}} + (\log X)^{-1/200}\right)^{1/2} + (\log X)^{-1/100} + \sqrt{\frac{\log\log h_1}{\log h_1}+ \left(\frac{h_2}{X}\right)^{1/2} \\
%&\ll (\log h_1)^{-1} + (\log X)^{-1/400}.
%\end{align*}
Combining these estimates, we obtain that the LHS of \eqref{eq:Ssum} is
$$
\ll (\log h_1)^{-1} + (\log X)^{-1/400}+\frac{\log\log h_1}{\log h_1} + \frac{h_2}{X} \ll (\log X)^{-1/400} + \frac{\log\log h_1}{\log h_1},
$$
as claimed.
\end{proof}

\begin{proof}[Proof of Theorem \ref{thm:MRL1}]
Set $h_1 := h$ and $h_2 := X/(\log X)^{1/3}$. As mentioned, we may assume that $g$ is real-valued. By Lemma \ref{lem:longH}, we have
\begin{align}\label{eq:redtoMed}
&\frac{2}{X}\sum_{X/2 < m \leq X} \left|\frac{1}{h_1}\sum_{m-h_1 < n \leq m} g(n) - \frac{2}{X}\sum_{X/2  <n \leq X} g(n) \right| \nonumber \\
&\ll \frac{2}{X}\sum_{X/2 < m \leq X} \left| \frac{1}{h_1}\sum_{m-h_1 < n \leq m} g(n) - \frac{1}{h_2}\sum_{m-h_2 < n \leq m} g(n)\right| + \frac{B_g(X)}{(\log X)^{1/6}}. 
\end{align}
Observe next that for any $x \in [X/2,X]$ and $t \in (0,1)$,
\begin{align*}
S_{h_1}^{(\mc{S})}(t;x) - S_{h_2}^{(\mc{S})}(t;x)  &= \frac{1}{h_1} \sum_{\ss{x-h_1 < n \leq x \\ n \in \mc{S}}} e(t\tilde{g}(n;X)) - \frac{1}{h_2} \sum_{\ss{x-h_2 < n \leq x \\ n \in \mc{S}}} e(t\tilde{g}(n;X)) \\
&= e(-tA_g(X)/B_g(X)) \cdot \left(\frac{1}{h_1} \sum_{\ss{x-h_1 < n \leq x \\ n \in \mc{S}}} G_{t,X}(n) - \frac{1}{h_2} \sum_{\ss{x-h_2 < n \leq x \\ n \in \mc{S}}} G_{t,X}(n)\right). 
\end{align*}
Taking $t := \max\{\sqrt{\frac{\log\log h_1}{\log h_1}}, (\log X)^{-1/800}\}$, the proof of Theorem \ref{thm:MRL1} thus follows on combining Lemmas \ref{lem:MRapp} and \ref{lem:passtoMR} with \eqref{eq:redtoMed}.
\end{proof}

\section{A Conditional $\ell^2$ Matom\"{a}ki-Radziwi\l\l \ Theorem} \label{sec:L2MRAdd}
\noindent In this section, we will prove Theorem \ref{thm:MRL2}.
Let $g \in \mc{A}_s$, so that $B_g(X) \ra \infty$, and the conditions
\begin{align}
\lim_{\delta \ra 0^+} F_g(\delta) = \lim_{\delta \ra 0^+} \limsup_{X \ra \infty} \frac{1}{B_g(X)^2} \sum_{\ss{p \leq X \\ |g(p)| > \delta^{-1} B_g(X)}} \frac{|g(p)|^2}{p} &= 0 \label{eq:LindCond} \\
\limsup_{X \ra \infty} \frac{1}{B_g(X)^2} \sum_{\ss{p^k \leq X \\ k \geq 2}} \frac{|g(p^k)|^2}{p^k} &= 0.
\end{align}
both hold. We seek to show that
\begin{equation}\label{eq:L2mse}
\Delta_g(X,h) := \frac{2}{X}\sum_{X/2 < n \leq X} \left|\frac{1}{h} \sum_{n-h < m \leq n} g(m) - \frac{2}{X}\sum_{X/2 < m \leq X} g(m)\right|^2 = o(B_g(X)^2),
\end{equation}
whenever $10 \leq h \leq X/10$ is an integer that satisfies $h = h(X) \ra \infty$ as $X \ra \infty$.
We begin by making a reduction.
\begin{lem} \label{lem:L2Red}
Suppose that Theorem \ref{thm:MRL2} holds for any non-negative, strongly additive
%\footnote{We recall that an additive function $g$ is called \emph{strongly additive}  if, for any prime $p$ and any integer $k \geq 1$ we have $g(p^k) = g(p)$.} 
function $g \in \mc{A}_s$. Then Theorem \ref{thm:MRL2} holds for any $g \in \mc{A}_s$. 
\end{lem}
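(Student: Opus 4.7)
The plan is to decompose $g \in \mc{A}_s$ into successively simpler pieces---first strongly additive, then real-valued, then non-negative---applying the hypothesized special case of Theorem \ref{thm:MRL2} to the resulting non-negative strongly additive pieces and controlling the remaining contributions by a trivial bound. Two ingredients are used throughout: the sub-additivity relation $\Delta_{f_1+f_2}(X,h) \leq 2\Delta_{f_1}(X,h) + 2\Delta_{f_2}(X,h)$, immediate from $|a+b|^2 \leq 2|a|^2 + 2|b|^2$, and the trivial bound $\Delta_f(X,h) \ll B_f(X)^2$, obtained by Cauchy--Schwarz on the short-interval sums and the Tur\'an--Kubilius inequality (Lemma \ref{lem:TK}).

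For the first reduction, Lemma \ref{lem:redtoSA}(b) yields a strongly additive $g^*$ with $B_{g-g^*}(X) = o(B_g(X))$, so sub-additivity and the trivial bound give $\Delta_g(X,h) \leq 2\Delta_{g^*}(X,h) + O(B_{g-g^*}(X)^2) = 2\Delta_{g^*}(X,h) + o(B_g(X)^2)$, reducing matters to $g^*$ with $B_{g^*}(X) \sim B_g(X)$. Splitting $g^* = \text{Re}(g^*) + i\,\text{Im}(g^*)$ into its two strongly additive real-valued parts and applying sub-additivity once more, the problem further reduces to bounding $\Delta_\psi(X,h) = o(B_g(X)^2)$ for each real-valued strongly additive $\psi$ with $B_\psi(X) \leq B_{g^*}(X)$.

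For the final reduction, given such $\psi$, define the non-negative strongly additive $\psi_\pm$ by $\psi_+(p) := \max(\psi(p),0)$ and $\psi_-(p) := \max(-\psi(p),0)$, extended strongly additively, so that $\psi = \psi_+ - \psi_-$ and $B_{\psi_+}(X)^2 + B_{\psi_-}(X)^2 = B_\psi(X)^2$. Sub-additivity reduces to bounding $\Delta_{\psi_\pm}(X,h)$. Fix $\delta_0 > 0$ and treat $\psi_+$ (the argument for $\psi_-$ is identical). If $B_{\psi_+}(X) \leq \sqrt{\delta_0}\, B_g(X)$, the trivial bound yields $\Delta_{\psi_+}(X,h) \ll \delta_0 B_g(X)^2$; otherwise $B_{\psi_+}(X) > \sqrt{\delta_0}\, B_g(X) \ra \infty$, and $\psi_+$ can be verified to lie in $\mc{A}_s$---strong additivity delivers condition (b) of $\mc{A}$ automatically, while $F_{\psi_+}(\e) \ra 0$ as $\e \ra 0^+$ is inherited from $F_g(\e) \ra 0$ via $|\psi_+(p)| \leq |g(p)|$ together with the comparability $B_{\psi_+}(X) \asymp B_g(X)$, which converts the threshold $\e^{-1} B_{\psi_+}(X)$ into one of the form $\delta^{-1} B_g(X)$ with $\delta \asymp \e$. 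The hypothesis then yields $\Delta_{\psi_+}(X,h) = o(B_{\psi_+}(X)^2) = o(B_g(X)^2)$.

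The main obstacle is this final step: $\psi_\pm$ need not automatically belong to $\mc{A}_s$, because the ratio $B_{\psi_\pm}(X)/B_g(X)$ can in principle oscillate, and absent a uniform lower bound on this ratio the transfer of $F_g \ra 0$ to $F_{\psi_\pm} \ra 0$ may fail. The dichotomy above---trivial bound for small ratio, hypothesis for comparable ratio---handles each $X$ individually, and this is what permits the reduction to go through cleanly without requiring $\psi_\pm \in \mc{A}_s$ as a global statement.
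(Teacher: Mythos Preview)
Your proposal is essentially correct and follows the same strategy as the paper---splitting into real/imaginary parts, then into non-negative parts, and passing to a strongly additive approximation via Lemma \ref{lem:redtoSA}---only in the reverse order: you reduce to strongly additive first and then split signs, whereas the paper splits $g$ into the four non-negative pieces $\text{Re}(g)^\pm$, $\text{Im}(g)^\pm$ first and only afterwards passes to the strongly additive approximant (verifying at that stage that $g^\ast \in \mc{A}_s$). Both orderings use the same ingredients and the same trivial bound for the error pieces.

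One wrinkle worth flagging is your per-$X$ dichotomy in the final step. The hypothesis ``Theorem \ref{thm:MRL2} holds for non-negative strongly additive $g\in\mc{A}_s$'' is an asymptotic statement about a \emph{function} in $\mc{A}_s$; it cannot literally be invoked at an individual scale $X$ for a $\psi_+$ whose membership in $\mc{A}_s$ is in doubt. Your sentence ``otherwise $B_{\psi_+}(X)>\sqrt{\delta_0}\,B_g(X)$ \ldots\ and $\psi_+$ can be verified to lie in $\mc{A}_s$'' conflates a condition at one $X$ with a global property of $\psi_+$, and your closing remark that the dichotomy ``handles each $X$ individually'' does not resolve this. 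That said, the paper's own proof is equally informal here---it simply asserts the reduction from complex to non-negative via Cauchy--Schwarz without checking that the pieces lie in $\mc{A}_s$---so you are not missing anything the paper supplies. In the context of the full proof of Theorem \ref{thm:MRL2}, the issue is moot because the subsequent argument (Lemma \ref{lem:L2IgnoreLrg} through Corollary \ref{cor:MRBDtoAdd}) gives bounds that are quantitatively uniform in the function once $\e,\delta$ are fixed, which is what makes a per-$X$ treatment legitimate.
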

\begin{proof}
By splitting $g = \text{Re}(g) + i\text{Im}(g)$, and separately decomposing 
\begin{align*}
\text{Re}(g) = \text{Re}(g)^+ - \text{Re}(g)^-, \quad \quad \text{Im}(g) = \text{Im}(g)^+ - \text{Im}(g)^-,
\end{align*}
where, for an additive function $h$ we define the non-negative additive functions $h^{\pm}$ on prime powers via
$$
h^+(p^k) := \max\{h(p^k),0\}, \quad h^-(p^k) := \max\{0,-h(p^k)\},
$$
the Cauchy-Schwarz inequality implies that if \eqref{eq:L2mse} holds for non-negative $g$ satisfying \eqref{eq:LindCond} then it holds for all completely additive $g$ satisfying \eqref{eq:LindCond}. \\
Therefore, we may assume that $g$ is non-negative. Now, by Lemma \ref{lem:redtoSA}, we can find a strongly additive function $g^{\ast}$ such that $B_{G}(X) = o(B_g(X))$ for $G := g-g^{\ast}$.
Thus,
$$
\frac{1}{B_g(X)^2} \sum_{\substack{p \leq X \\ g(p) > \delta^{-1} B_g(X)}} \frac{g(p)^2}{p} \ll \frac{1}{B_{g^{\ast}}(X)^2} \sum_{\substack{p \leq X \\ g^{\ast}(p) > (2\delta)^{-1} B_{g^{\ast}}(X)}} \frac{g^{\ast}(p)^2}{p},
$$
for $X$ large enough. Moreover, we see by the Cauchy-Schwarz inequality and Lemma \ref{lem:TK} that
\begin{align*}
\frac{2}{X}\sum_{X/2<n \leq X} \left|\frac{1}{h}\sum_{n-h < m \leq n} (G(m)-A_G(X))\right|^2 &\leq  \frac{1}{Xh} \sum_{X/2 < n \leq X} \sum_{n-h < m \leq n} |G(m)-A_G(X)|^2 \\ &\ll \frac{1}{X} \sum_{X/3 < m \leq X} |G(m)-A_G(X)|^2 \ll B_G(X)^2 = o(B_g(X)^2).
\end{align*}
Using the estimate $\frac{2}{X}\sum_{X/2 < n \leq X} G(n) = A_G(X) + o(B_g(X))$ by Lemma \ref{lem:mean}, we see that
$$
\Delta_g(X,h) \ll \Delta_{g^{\ast}}(X,h) + \Delta_{G}(X,h) = \Delta_{g^{\ast}}(X,h) + o(B_g(X)),
$$
so that if \eqref{eq:L2mse} holds for strongly additive $g^{\ast} \in \mc{A}_s$ then it also holds for all $g \in \mc{A}_s$. This completes the proof.
\end{proof}

Until further notice we may thus assume that $g$ is strongly additive. For a fixed small parameter $\e> 0$, let $\delta \in (0,1/100)$ be chosen such that $F_g(\delta) < \e$. Let $X$ be a scale chosen sufficiently large so that 
\begin{equation}\label{eq:ScaleChoice}
\sum_{\ss{p \leq X \\ |g(p)| > \delta^{-1} B_g(X)}} \frac{|g(p)|^2}{p} \leq 2F_g(\delta) B_g(X)^2.
\end{equation}
With this data, define
$$
\mc{C} = \mc{C}(X,\delta) := \{p \leq X : |g(p)| \leq \delta^{-1}B_g(X)\}.
$$
We decompose $g$ as
$$
%g^{\ast} = g^{\ast}_{\mc{C}} + g^{\ast}_{\mc{P} \bk \mc{C}},
g = g_{\mc{C}} + g_{\mc{P} \bk \mc{C}},
$$
where $g_{\mc{C}}$ and $g_{\mc{P} \bk \mc{C}}$ are strongly additive functions defined at primes by
$$
g_{\mc{C}}(p) := \begin{cases} g(p) &\text{ if $p \in \mc{C}$} \\ 0 &\text{ if $p \notin \mc{C}$.}\end{cases} \quad \quad 
g_{\mc{P}\bk \mc{C}}(p) := \begin{cases} 0 &\text{ if $p \in \mc{C}$} \\ g(p) &\text{ if $p \notin \mc{C}$.}\end{cases}
$$
We will consider the mean-squared errors
$$
\frac{2}{X}\sum_{X/2 < n \leq X} \left|\frac{1}{h} \sum_{n-h<m\leq n} g_{\mc{A}}(m) - \frac{2}{X}\sum_{X/2 < m\leq X} g_{\mc{A}}(m)\right|^2,
$$
for $\mc{A} \in \{\mc{C},\mc{P}\bk \mc{C}\}$, separately. 
%Here, $X$ is chosen large enough so that $B_{g_{\mc{P}\bk\mc{C}}}(X) \leq 2F_g(\delta) B_g(X) \\

\begin{lem} \label{lem:L2IgnoreLrg}
Let $g \in \mc{A}_s$ be a non-negative, strongly additive function. Assume that $X$ and $\delta$ are chosen such that \eqref{eq:ScaleChoice} holds. Then we have
$$
\Delta_g(X) \ll \Delta_{g_{\mc{C}}}(X) + \e B_g(X)^2,
$$
where $\mc{C} = \mc{C}(X,\delta)$.
\end{lem}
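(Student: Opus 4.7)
The plan is to split $g = g_{\mc{C}} + g_{\mc{P}\bk\mc{C}}$ inside $\Delta_g(X,h)$ and use the $\ell^2$ triangle inequality $|a+b|^2 \leq 2|a|^2 + 2|b|^2$ to obtain
$$
\Delta_g(X,h) \leq 2\Delta_{g_{\mc{C}}}(X,h) + 2\Delta_{g_{\mc{P}\bk\mc{C}}}(X,h).
$$
It then suffices to show $\Delta_{g_{\mc{P}\bk\mc{C}}}(X,h) \ll \e B_g(X)^2$, i.e., for the strongly additive ``large-prime part'' of $g$ we will just invoke a trivial (Tur\'{a}n-Kubilius) bound.

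First I would compute $B_{g_{\mc{P}\bk\mc{C}}}(X)^2$. Since $g$ is strongly additive, $g_{\mc{P}\bk\mc{C}}(p^k)=g(p)\mathbf{1}_{p\notin \mc{C}}$, so
$$
B_{g_{\mc{P}\bk\mc{C}}}(X)^2 = \sum_{\substack{p^k \leq X \\ p \notin \mc{C}}} \frac{g(p)^2}{p^k} \ll \sum_{\substack{p \leq X \\ |g(p)| > \delta^{-1}B_g(X)}} \frac{g(p)^2}{p} \leq 2 F_g(\delta) B_g(X)^2 \leq 2\e B_g(X)^2,
$$
by the choice of $\delta$ and the hypothesis \eqref{eq:ScaleChoice}.

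Next, to pass from the long average $\frac{2}{X}\sum_{X/2<m\leq X}g_{\mc{P}\bk\mc{C}}(m)$ to the Tur\'{a}n-Kubilius centering $A_{g_{\mc{P}\bk\mc{C}}}(X)$, apply Lemma \ref{lem:mean}; the error is $O(B_{g_{\mc{P}\bk\mc{C}}}(X)/\sqrt{\log X}) = O(\sqrt{\e}\,B_g(X)/\sqrt{\log X})$, which is negligible after squaring. Then apply Cauchy-Schwarz inside the short sum and swap the order of summation:
$$
\frac{2}{X}\sum_{X/2<n\leq X}\left|\frac{1}{h}\sum_{n-h<m\leq n}\bigl(g_{\mc{P}\bk\mc{C}}(m)-A_{g_{\mc{P}\bk\mc{C}}}(X)\bigr)\right|^2 \ll \frac{1}{X}\sum_{X/3<m\leq X}\bigl|g_{\mc{P}\bk\mc{C}}(m)-A_{g_{\mc{P}\bk\mc{C}}}(X)\bigr|^2,
$$
using $h \leq X/100$ so the inner interval lies in $(X/3,X]$. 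Lemma \ref{lem:TK}, after extending trivially to the interval $[1,X]$ (at the cost of a harmless constant), bounds the right-hand side by $\ll B_{g_{\mc{P}\bk\mc{C}}}(X)^2 \ll \e B_g(X)^2$, giving $\Delta_{g_{\mc{P}\bk\mc{C}}}(X,h) \ll \e B_g(X)^2$ as required.

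There is no real obstacle here: the point is simply that \eqref{eq:ScaleChoice} (a consequence of $g \in \mc{A}_s$) makes the large-prime piece of $g$ \emph{globally} small in the $B_g$-norm, so the Tur\'{a}n-Kubilius upper bound on $\Delta_{g_{\mc{P}\bk\mc{C}}}(X,h)$ is already acceptable. The only mild care is handling the interval of summation after swapping (so that Lemma \ref{lem:TK} applies cleanly) and the passage between $\bar g_{\mc{P}\bk\mc{C}}$ and $A_{g_{\mc{P}\bk\mc{C}}}(X)$; both are routine.
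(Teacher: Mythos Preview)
Your proposal is correct and follows essentially the same approach as the paper: split $g = g_{\mc{C}} + g_{\mc{P}\bk\mc{C}}$, apply the $\ell^2$ triangle inequality to get $\Delta_g \ll \Delta_{g_{\mc{C}}} + \Delta_{g_{\mc{P}\bk\mc{C}}}$, and bound $\Delta_{g_{\mc{P}\bk\mc{C}}}$ trivially via Cauchy--Schwarz, swapping, and Tur\'{a}n--Kubilius to obtain $\ll B_{g_{\mc{P}\bk\mc{C}}}(X)^2 \ll \e B_g(X)^2$ using \eqref{eq:ScaleChoice}. The paper abbreviates your middle steps by simply citing the argument of Lemma~\ref{lem:L2Red}, but the content is identical.
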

\begin{proof}
%It suffices to show that $\Delta_{g_{\mc{P}\bk \mc{C}}}(X) = o_{\e \ra 0^+}(B_g(X)^2)$. To see this, note that \eqref{eq:LindCond} is equivalent to the condition
%$$
%B_{g_{\mc{P}\bk \mc{C}}}(X)^2 = o_{\e \ra 0^+} (B_g(X)^2)
%$$
%uniformly as $X \ra \infty$. 
Arguing as in the proof of Lemma \ref{lem:L2Red}, we see that
$$
\Delta_{g_{\mc{P} \bk \mc{C}}}(X) \ll B_{g_{\mc{P} \bk \mc{C}}}(X)^2 \leq 2 F_g(\delta)B_g(X)^2 < 2\e B_g(X)^2.
$$
Thus, by the Cauchy-Schwarz inequality we obtain
$$
\Delta_g(X) \ll \Delta_{g_{\mc{C}}}(X) + \Delta_{g_{\mc{P}\bk\mc{C}}}(X) \ll \Delta_{g_{\mc{C}}}(X) + \e B_g(X)^2,
$$
as claimed.
\end{proof}

In the present context, given a multiplicative function $f: \mb{N} \ra \mb{C}$, such that $|f(p)|$ is uniformly bounded for all $p \leq X$, we define the following variant of the pretentious distance:
$$
\rho(f,n^{it};X)^2 := \sum_{p \leq X} \frac{|f(p)| - \text{Re}(f(p)p^{-it})}{p}.
$$
We let $t_0 = t_0(f,X)$ denote a real number $t \in [-X,X]$ that minimizes $t \mapsto \rho(f,n^{it};X)^2$.
In the sequel, we work with multiplicative functions determined by $g_{\mc{C}}$, which we define as follows. \\
Fix $r \in (0,\delta^{2}]$. Given $z \in \mb{C}$ satisfing $|z-1| = r$, define 
$$
F_{z}(n) := z^{g_{\mc{C}}(n)/B_g(X)} \text{ for all } n \in \mb{N}.
$$
Since $g_{\mc{C}}$ is strongly additive and satisfies $0 \leq g_{\mc{C}}(p) \leq \delta^{-1} B_g(X)$ for all $p \leq X$, we have
$$
|F_{z}(p^k)| = |F_z(p)| \leq (1+\delta^2)^{\delta^{-1}} \leq e
$$
for all $p^k \leq X$, and thus also
$$
|F_{z}(n)| \leq \left(\max_{p|n} |F_{z}(p)|\right)^{\omega(n)} \leq n^{O\left(\frac{1}{\log\log n}\right)} \ll_{\e} n^{\e},
$$
for any $n \leq X$. Furthermore, as $\delta \in (0,1/100)$, for any $2 \leq u \leq v \leq X$ we get
$$
\sum_{u < p \leq v} \frac{|F_z(p)|}{p} \geq (1-\delta^2)^{\delta^{-1}} \sum_{u < p \leq v} \frac{1}{p} \geq 0.99 \sum_{z < p \leq w} \frac{1}{p}.
$$
%and hence $F_z \in \mc{M}(X;0.99,e)$, in the notation of the previous section, 
%for all $|z-1| = \delta^2$.\\
In preparation to apply a result from the recent paper \cite{ManMR}, we introduce some further notation. Given a multiplicative function $f: \mb{N} \ra \mb{C}$ set
$$
H(f;X) := \prod_{p \leq X} \left(1+\frac{(|f(p)|-1)^2}{p}\right), \quad\quad \mc{P}_f(X) := \prod_{p \leq X} \left(1+ \frac{|f(p)|-1}{p}\right).
$$
For $B \geq 1$, write $d_B(n)$ to denote the generalized $B$-fold divisor function, i.e., the non-negative multiplicative function arising as the coefficients of the Dirichlet series
$$
\zeta(s)^B = \sum_{n \geq 1} \frac{d_B(n)}{n^s}, \quad\quad \text{Re}(s) > 1.
$$
When $B \in \mb{N}$ these are the usual divisor functions, e.g., $d_2(n) = d(n)$; if $B = 1$ then $d_B \equiv 1$. In general, $d_B(p^{\nu}) = \binom{B+\nu-1}{\nu}$ for any prime power $p^{\nu}$, and in particular $d_B(p^{\nu}) \geq B = d_B(p)$ for all $p$ and $\nu \geq 1$.
\begin{thm}[\cite{ManMR}, Thm. 2.1]\label{thm:MRDB}
Let $B \geq 1$ and  $0 < A \leq B$, and let $X$ be large. Let $f: \mb{N} \ra \mb{C}$ be a multiplicative function that satisfies: \\
\begin{enumerate}[(i)]
\item $|f(n)| \leq d_B(n)$ for all $n \leq X$, and in particular $|f(p)| \leq B$ for all $p \leq X$,  
%\item $|f(n)| \ll_{\e} n^{\e}$ for all $n \leq X$
\item for any $2 \leq u \leq v \leq X$,
$$
\sum_{u < p \leq v} \frac{|f(p)|}{p} \geq A \sum_{u < p \leq v} \frac{1}{p} - O\left(\frac{1}{\log u}\right).
$$
\end{enumerate}
Let $10 \leq h_0 \leq X/(10H(f;X))$, and put $h_1 := h_0 H(f;X)$ and $t_0 = t_0(f,X)$. Then there are constants $c_1,c_2 \in (0,1/3)$, depending only on $A,B$, such that if $X/(\log X)^{c_1} < h_2 \leq X$,
\begin{align*}
&\frac{2}{X}\int_{X/2}^X \left|\frac{1}{h_1}\sum_{x-h_1 < n \leq x} f(n) - \frac{1}{h_1}\int_{x-h_1}^x u^{it_0} du \cdot \frac{2}{X}\sum_{x-h_2 < n \leq x} f(n)\right|^2 dx \\
&\ll_{A,B} \left(\left(\frac{\log\log h_0}{\log h_0}\right)^A + \left(\frac{\log\log X}{(\log X)^{c_2}}\right)^{\min\{1,A\}} \right) \mc{P}_f(X)^2.
\end{align*}
\end{thm}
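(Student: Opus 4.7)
\section*{Proof plan for Theorem \ref{thm:MRDB}}

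Since this theorem is quoted directly from \cite{ManMR}, I would follow the Matom\"{a}ki--Radziwi\l\l \ strategy adapted to divisor-bounded multiplicative functions. The starting point is a Parseval-type identity that rewrites the short-interval averaging operator in terms of a Dirichlet polynomial: for $F(s) := \sum_{n \leq X} f(n) n^{-s}$, one has (up to acceptable error)
\begin{equation*}
\frac{2}{X} \int_{X/2}^X \left| \frac{1}{h_1} \sum_{x-h_1 < n \leq x} f(n) - \frac{1}{h_1}\int_{x-h_1}^x u^{it_0} du \cdot M \right|^2 dx \ll \frac{1}{\log X} \int_{|t-t_0| \leq X/h_2}^{\,|t| \leq X/h_1} |F(1+1/\log X + it)|^2 \, \frac{dt}{1+|t|^2},
\end{equation*}
where $M$ is the long average, and the integration range excludes the ``major arc'' near $t_0$. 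Thus the entire problem reduces to bounding $|F(1+1/\log X+it)|$ off of $t_0$.

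The core tool would be a Ramar\'{e}-type identity tailored to divisor-bounded functions: for $n$ in the sifted set $\mc{S}$ of integers with at least one prime factor in each of the intervals $[P_j,Q_j]$, one writes $f(n)$ as a convolution $f = f' \ast (f \cdot 1_{[P_j,Q_j]})$ reweighted by $1/\omega_{[P_j,Q_j]}(n)$. Substituting this into $F(s)$ produces, for each $j$, a product decomposition $F(1+1/\log X+it) \approx Q_j(t) \cdot R_j(t)$ with $Q_j$ a short Dirichlet polynomial over primes in $[P_j,Q_j]$ and $R_j$ a long one; the ratio between the two scales can be optimized in the standard way. Applying the mean value theorem for Dirichlet polynomials plus Hal\'{a}sz-type large values estimates handles the set of $t$ where $|Q_j(t)|$ is small, while iterating over $j$ controls the set where $|Q_j(t)|$ is simultaneously large for all $j$.

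The pretentious minimizer $t_0$ enters precisely here: the contribution of frequencies $t$ close to $t_0$ is exactly what the ``long average twisted by $u^{it_0}$'' on the right-hand side captures, so removing a neighbourhood of $t_0$ gains the factor $\mc{P}_f(X)^2$ via the Granville--Soundararajan/Hal\'{a}sz bound applied to $f(n)n^{-it_0}$, which has lowered pretentiousness. The quantity $H(f;X)$ appears as the natural second-moment inflation factor: $\sum_{n \leq X}|f(n)|^2 \ll X\mc{P}_f(X)^2 H(f;X)$, and this is what forces the length $h_1 = h_0 H(f;X)$ rather than just $h_0$.

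The principal obstacle is the divisor-bounded regime itself: unlike the $|f| \leq 1$ case, the relevant $L^2$-norms of Dirichlet polynomials now carry factors of $(\log X)^{B^2-1}$ and $H(f;X)$, which tighten the admissible range of $h_0$ and push the thresholds in the large values estimates. Getting the sharp savings $(\log\log h_0/\log h_0)^A$ rather than $(\log\log h_0/\log h_0)^{\min(1,A)}$ or worse requires handling the ``pretentious'' parameter $A$ separately from the ``divisor'' parameter $B$, which is the chief technical input of \cite{ManMR} over \cite{MR}. I would expect most of the effort to go into calibrating the $P_j,Q_j$ and the exponents $\nu_1,\nu_2$ in the large-values/mean-value trade-off so that the final bound depends only on $A$ and $B$ and not on auxiliary parameters.
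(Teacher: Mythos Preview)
The paper does not contain a proof of this statement: Theorem~\ref{thm:MRDB} is quoted verbatim as an external result from \cite{ManMR} (Theorem~2.1 there) and is used as a black box in the proof of Corollary~\ref{cor:MRBDtoAdd}. There is therefore no ``paper's own proof'' to compare your proposal against. Your outline is a reasonable high-level sketch of the Matom\"{a}ki--Radziwi\l\l\ machinery adapted to the divisor-bounded setting, and you correctly identify the roles of $t_0$, $H(f;X)$, and the parameter $A$; but any detailed assessment would have to be made against \cite{ManMR} itself rather than the present paper.
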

The conditions (i) and (ii) of the theorem were verified above for $f = F_z$, and it remains to elucidate information about $t_0(F_z,X)$, $H(F_z;X)$ and the size of the Euler product $\mc{P}_{F_z}(X)$.
\begin{lem} \label{lem:removet0}
Fix $r \in (0,\delta^2]$ and let $z \in \mb{C}$ satisfy $|z-1| = r$. Then:
\begin{enumerate}[(a)]
\item $t_0(F_z,X) \ll 1/\log X$,
\item $H(F_z;X) \asymp 1$, and
\item $\mc{P}_{F_z}(X)^2 \ll \prod_{p \leq X} \left(1+p^{-1}\left(|z|^{2g_{\mc{C}}(p)/B_g(X)}-1\right)\right)$.
\end{enumerate}
\end{lem}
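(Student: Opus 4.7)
The overall plan is to exploit the fact that $z$ lies within $\delta^2$ of $1$, so that $F_z$ is a small perturbation of the constant function $1$ uniformly over primes $p \leq X$. Writing $w := \log z$, the expansion $\log(1+u) = u + O(|u|^2)$ gives $|w| \leq 2\delta^2$. Setting $\alpha_p := g_{\mc{C}}(p)/B_g(X) \in [0,\delta^{-1}]$, we have $F_z(p) = e^{\alpha_p w}$, whence $|F_z(p)| \in [e^{-2\delta},e^{2\delta}]$, $|\arg F_z(p)| = \alpha_p|\text{Im}\, w| \leq 2\alpha_p\delta^2$, and, by $|e^u - 1| \leq 2|u|$ for $|u| \leq 1$, both $\bigl||F_z(p)| - 1\bigr| \leq 4\alpha_p\delta^2$ and $|\text{Im}\, F_z(p)| \leq 3\alpha_p\delta^2$. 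The crucial normalization is $\sum_{p \leq X} \alpha_p^2/p \leq 1$, which follows from strong additivity of $g$ together with $|g_{\mc{C}}(p)| \leq |g(p)|$.

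Parts (b) and (c) will follow directly from these estimates. For (b), summing the pointwise bound $(|F_z(p)|-1)^2/p \leq 16\alpha_p^2\delta^4/p$ over $p \leq X$ yields $\sum_p (|F_z(p)|-1)^2/p \leq 16\delta^4$, so $1 \leq H(F_z;X) \leq e^{16\delta^4} = O(1)$. For (c), I would verify the termwise inequality $(1+(|F_z(p)|-1)/p)^2 \leq 1+(|z|^{2\alpha_p}-1)/p$ by expanding both sides and using the factorization $|z|^{2\alpha_p} - 1 = (|F_z(p)|-1)(|F_z(p)|+1)$; after cancelling the equal linear terms this reduces to $(|F_z(p)|-1)^2/p^2 \leq (|F_z(p)|-1)^2/p$, which is trivial, and taking the product over $p \leq X$ gives the claim.

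For part (a), the first step is the bound $\rho(F_z,1;X)^2 = 2\sum_p |F_z(p)|\sin^2(\arg F_z(p)/2)/p \leq 4\delta^4 \sum_p \alpha_p^2/p \leq 4\delta^4$, so by minimality of $t_0$, $\rho(F_z,n^{it_0};X)^2 \leq 4\delta^4$ as well. I would then expand
$$\rho(F_z,n^{it_0})^2 - \rho(F_z,1)^2 = \sum_p \frac{\text{Re}\,F_z(p)(1-\cos(t_0\log p))}{p} - \sum_p \frac{\text{Im}\,F_z(p)\sin(t_0\log p)}{p} \leq 0,$$
and use $\text{Re}\,F_z(p) \geq 1/2$ on the left, combined with $|\text{Im}\,F_z(p)| \leq 3\alpha_p\delta^2$, $\sin^2 x \leq 2(1-\cos x)$, the Cauchy--Schwarz inequality, and $\sum_p \alpha_p^2/p \leq 1$, on the right. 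This yields $S/2 \leq 3\delta^2 \sqrt{2S}$ for $S := \sum_{p \leq X}(1-\cos(t_0\log p))/p$, and hence $S \ll \delta^4$.

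The main obstacle will be converting the bound $S \ll 1$ into $|t_0| \ll 1/\log X$. I plan to use the standard Mertens-type identity $S = \log\log X - \log|\zeta(1+1/\log X + it_0)| + O(1)$ (of exactly the form invoked in the proof of Lemma \ref{lem:controlMT}) to transfer the problem to the Riemann zeta function. The range $|t_0| \in [2,X]$ is excluded by the Vinogradov--Korobov zero-free region, which gives $|\zeta(1+1/\log X+it_0)| \ll (\log X)^{2/3+o(1)}$ and hence $S \geq (1/3-o(1))\log\log X$, contradicting $S = O(1)$. For $|t_0| \in [C/\log X, 2]$, the asymptotic $\zeta(1+\sigma+it) = (\sigma+it)^{-1} + O(1)$ near the pole gives $|\zeta(1+1/\log X + it_0)| \asymp 1/|t_0|$, so $S = \log(|t_0|\log X) + O(1)$; the bound $S = O(1)$ then forces $|t_0|\log X = O(1)$, completing the proof.
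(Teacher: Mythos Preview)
Your argument is correct. For parts (b) and (c) your treatment is essentially identical to the paper's: both use the Taylor bound $|F_z(p)|-1 = O(\delta \alpha_p)$ together with $\sum_p \alpha_p^2/p \leq 1$ for (b), and both reduce (c) to the termwise inequality $(1+x/p)^2 \leq 1+(x^2+2x)/p$ with $x = |F_z(p)|-1$.

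For part (a) the routes diverge. The paper bounds $\rho(F_z,1;X)^2$ by an absolute constant (exactly as you do) and then invokes a ready-made lemma from \cite{ManMR}/\cite{MRII} asserting that if $|t_0|\log X$ is large then $\rho(F_z,1;X)^2$ must exceed any given constant, yielding an immediate contradiction. You instead work from first principles: from the minimality inequality $\rho(F_z,n^{it_0})^2 \leq \rho(F_z,1)^2$ you extract, via Cauchy--Schwarz on the imaginary part, the bound $S := \sum_{p\leq X}(1-\cos(t_0\log p))/p \ll \delta^4$, and then convert $S = O(1)$ into $|t_0|\ll 1/\log X$ using the Mertens--zeta identity and standard bounds for $\zeta$ near the line $\text{Re}(s)=1$. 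Your approach is longer but fully self-contained, and in effect reproves the special case of the cited lemma that is needed here; the paper's approach is quicker but relies on an external reference.
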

\begin{proof}
%By assumption, for any prime $p \leq X$,
%$$
%(2e)^{-1} (1-\delta^2)^{\delta^{-1}} \leq |F_z(p)| \leq (1+\delta^2)^{\delta^{-1}} \leq 2e,
%$$
(a) Applying \cite[(7)]{ManMR} with $A = 0.99$, $B = e$ and $C = 1$ (which is a straightforward consequence of \cite[Lem. 5.1(i)]{MRII}), we see that if $(\log X)|t_0(F_z;X)| \geq D$ for a suitably large constant $D > 0$ then
$$
\rho(F_z,1;X)^2 \geq \sigma \min\{\log\log X, 3\log(|t_0|\log X + 1)\} + O(1) \geq 100,
$$
say, where $\sigma > 0$ is an absolute constant. On the other hand, since $0 \leq 1-\cos x \leq x^2/2$ for all $x \geq 0$, observe that for any $z = re(\theta)$, with $\theta \in [-\pi,\pi]$, we have
$$
\rho(F_z,1;X)^2 = \sum_{p \leq X} |z|^{g_{\mc{C}}(p)/B_g(X)} \frac{1-\cos(\theta g(p)/B_g(X))}{p} \leq \frac{e\pi^2}{2B_g(X)^2} \sum_{p \leq X} \frac{g(p)^2}{p} \leq e\pi^2/2.
$$
This contradiction implies the claim.\\
(b) By Taylor expansion, $|z|^{g_{\mc{C}}(p)/B_g(X)} = 1 + O(\delta g(p)/B_g(X))$, and thus
$$
H(F_z;X) \ll \exp\left(\sum_{p \leq X} \frac{(|z|^{g_{\mc{C}}(p)/B_g(X)}-1)^2}{p}\right) \ll \exp\left(O\left(\frac{\delta^2}{B_g(X)^2}\sum_{p \leq X} \frac{g(p)^2}{p}\right)\right) \ll 1.
$$
(c)  This follows immediately from the upper bounds
$$
\mc{P}_f(X)^2 \ll_B \prod_{p \leq X} \left(1+\frac{2(|f(p)|-1)}{p}\right) \leq \prod_{p \leq X} \left(1+\frac{|f(p)|^2-1}{p}\right),
$$
which are valid whenever $|f(p)| \leq B$ for all $p \leq X$.
%We observe that since $F_z(p)$ is only supported on primes $p$ with $g(p)\leq \delta^{-1}B_g(X)$, we see again by Taylor expansion that
%$$
%\mc{P}_f(X) \ll \exp\left(\sum_{\ss{p \leq X \\ |g(p)| \leq \delta^{-1}B_g(X)}} \frac{(1+\delta^2)^{g(p)/B_g(X)}-1}{p}\right) \leq \exp\left(\frac{2\delta^2
\end{proof}

Let $c_1 \in (0,1/3)$ be the constant from Theorem \ref{thm:MRDB}. By Lemma \ref{lem:longH}, if $h_2 = \left\lceil X/(\log X)^{c_1}\right\rceil$ then for any $x \in (X/2,X]$
\begin{equation}\label{eq:swapXforh2}
\frac{1}{h_2}\sum_{x-h_2 < n \leq x} g_{\mc{C}}(n) = \frac{1}{X} \sum_{X/2<n \leq X} g_{\mc{C}}(n) + O(B_g(X)/(\log X)^{1/6}),
\end{equation}
so that it suffices to show that
$$
\Delta_{g_{\mc{C}}}(X;h_1,h_2) := \frac{2}{X}\sum_{X/2 < n \leq X} \left|\frac{1}{h_1}\sum_{n-h_1<m \leq x} g_{\mc{C}}(m) - \frac{1}{h_2} \sum_{n-h_2< m \leq n} g_{\mc{C}}(m)\right|^2 = o(B_g(X)^2),
$$
where $h_1 = h$ and $h_2 = \left\lceil X/(\log X)^{c_1} \right\rceil$.

\begin{lem}\label{lem:L2passtoMult}
Let $g$ be non-negative and strongly additive. Let $r \in (0,\delta^2]$ as above. Then there is $z_0 \in \mb{C}$ with $|z_0-1| = r$ such that
\begin{align*}
&\Delta_{g_{\mc{C}}}(X;h_1,h_2) \\
&\ll \frac{B_g(X)^2}{r^2} |z_0|^{-2\frac{A_g(X)}{B_g(X)}} \frac{2}{X} \sum_{X/2 < n \leq X} \left|\frac{1}{h_1} \sum_{n-h_1 < m \leq n} z_0^{\tilde{g}_{\mc{C}}(m)} - \frac{1}{h_1}\int_{n-h_1}^n u^{it_0} du \cdot \frac{1}{h_2}\sum_{n-h_2 < m \leq n} z_0^{\tilde{g}_{\mc{C}}(m)}m^{-it_0}\right|^2\\
&+\frac{B_g(X)^2}{r^2} (\log X)^{-c_1+o(1)} .
\end{align*}
where $\tilde{g}_{\mc{C}}(m) := g_{\mc{C}}(m)/B_g(X)$, and $t_0 = t_0(F_{z_0},X)$.
\end{lem}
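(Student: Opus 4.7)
The plan is to encode $g_{\mc{C}}$ multiplicatively via $F_z(n) = z^{g_{\mc{C}}(n)/B_g(X)}$ and recover $g_{\mc{C}}$ by Cauchy's integral formula on the circle $|z-1|=r$. Since $\Delta_{g_{\mc{C}}}$ is invariant under shifting $g_{\mc{C}}$ by a constant, I first subtract $A_g(X)$ and work with $\tilde{F}_z(m) := z^{(g_{\mc{C}}(m)-A_g(X))/B_g(X)} = z^{-A_g(X)/B_g(X)}F_z(m)$, noting that differentiation in $z$ at $z=1$ returns $(g_{\mc{C}}(m)-A_g(X))/B_g(X)$. Parametrizing $z = 1 + re^{i\theta}$ yields
$$g_{\mc{C}}(m) - A_g(X) = \frac{B_g(X)}{2\pi r}\int_0^{2\pi} \tilde{F}_{1+re^{i\theta}}(m)\, e^{-i\theta}\, d\theta,$$
so by linearity, Cauchy--Schwarz in $\theta$, and averaging the square over $n \in (X/2,X]$,
$$\Delta_{g_{\mc{C}}}(X;h_1,h_2) \leq \frac{B_g(X)^2}{2\pi r^2}\int_0^{2\pi} |1+re^{i\theta}|^{-2A_g(X)/B_g(X)}\cdot \frac{2}{X}\sum_{X/2<n\leq X}|D_n(\theta)|^2\, d\theta,$$
where $D_n(\theta) := h_1^{-1}\sum_{n-h_1<m\leq n} F_{1+re^{i\theta}}(m) - h_2^{-1}\sum_{n-h_2<m\leq n}F_{1+re^{i\theta}}(m)$. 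Pigeonholing a $\theta_0$ (and $z_0 := 1+re^{i\theta_0}$) that maximizes the integrand reduces matters to bounding $|z_0|^{-2A_g(X)/B_g(X)}\cdot\mathrm{avg}_n|D_n(\theta_0)|^2$.

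To reach the form stated in the lemma, I split $D_n(\theta_0) = A_n - B_n$ via $(a-b)^2 \leq 2a^2 + 2b^2$, where $A_n$ is the quantity inside the absolute-value bars in the lemma and
$$B_n := \frac{1}{h_2}\sum_{n-h_2<m\leq n}F_{z_0}(m) - \frac{1}{h_1}\int_{n-h_1}^n u^{it_0}du \cdot \frac{1}{h_2}\sum_{n-h_2<m\leq n}F_{z_0}(m)\, m^{-it_0}.$$
The crucial input is Lemma \ref{lem:removet0}(a), which gives $|t_0| \ll 1/\log X$. Since $h_2 \leq X/(\log X)^{c_1}$, for $u \in [n-h_1,n]$ and $m \in (n-h_2,n]$ with $n \asymp X$ we have $(u/m)^{it_0} = 1 + O((\log X)^{-1-c_1})$. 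Substituting this approximation inside $B_n$ yields $|B_n| \ll (\log X)^{-1-c_1}\, h_2^{-1}\sum_{n-h_2<m\leq n}|F_{z_0}(m)|$; Cauchy--Schwarz and the standard mean-value theorem for multiplicative functions then give $\frac{2}{X}\sum_n|B_n|^2 \ll (\log X)^{-2(1+c_1)}\mc{P}_{F_{z_0}}(X)^2$.

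The final ingredient is the cancellation $|z_0|^{-2A_g(X)/B_g(X)}\mc{P}_{F_{z_0}}(X)^2 \ll 1$. Using Lemma \ref{lem:removet0}(c) and the Taylor expansion $|z_0|^{2g_{\mc{C}}(p)/B_g(X)} - 1 = 2\log|z_0|\cdot g_{\mc{C}}(p)/B_g(X) + O((g_{\mc{C}}(p)/B_g(X))^2 r^2)$, summed over $p \leq X$ together with $\sum_{p\leq X} g_{\mc{C}}(p)/p = A_g(X) + o(B_g(X))$ (which follows from the definition of $\mc{C}$, the smallness of $F_g(\delta)$, and $g \in \mc{A}$), one finds $\mc{P}_{F_{z_0}}(X)^2 = |z_0|^{2A_g(X)/B_g(X)}\cdot e^{o(1)+O(r^2)}$, cancelling the offending factor. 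Together this delivers the claimed error term $\frac{B_g(X)^2}{r^2}(\log X)^{-c_1+o(1)}$. The principal obstacle is engineering this last cancellation: without the opening subtraction of $A_g(X)$, the factor $|z_0|^{-2A_g/B_g}$ could be as large as $(\log X)^{O(\sqrt{\log\log X})}$ and would swamp the final bound; the shift is calibrated precisely so that its exponent matches the drift of $\mc{P}_{F_{z_0}}(X)^2$ caused by $|F_{z_0}(p)|$ departing from $1$.
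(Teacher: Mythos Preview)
Your proof is correct and follows essentially the same route as the paper's: encode $g_{\mc{C}}$ via $z^{\tilde g_{\mc{C}}}$, differentiate via Cauchy's integral formula on $|z-1|=r$, apply Cauchy--Schwarz in $\theta$ to select a maximising $z_0$, and then use $|t_0|\ll 1/\log X$ from Lemma~\ref{lem:removet0}(a) together with Shiu's bound to pass from the $h_2$-sum to its $n^{it_0}$-twisted form.

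One minor remark: the explicit cancellation $|z_0|^{-2A_g/B_g}\mc{P}_{F_{z_0}}(X)^2\ll 1$ that you carry out at the end is not actually needed for this lemma. Since $|\log|z_0||\leq r$ and $|A_g(X)|/B_g(X)\ll\sqrt{\log\log X}$ by Cauchy--Schwarz, one has the crude bound $|z_0|^{-2A_g/B_g}=(\log X)^{o(1)}$, which combined with $h_2/X=(\log X)^{-c_1}$ already yields the stated error $(\log X)^{-c_1+o(1)}$. The paper defers the sharper cancellation (which you have correctly identified) to the next step, Corollary~\ref{cor:MRBDtoAdd}, where it is genuinely required to control $\mc{P}_{F_{z_0}}(X)^2$ against the prefactor. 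Your claim $\sum_{p\leq X}g_{\mc{C}}(p)/p=A_g(X)+o(B_g(X))$ is also slightly imprecise (it is $+O_\delta(B_g(X))$ in general), but since this gets multiplied by $\log|z_0|/B_g(X)=O(r)$ in the exponent, it still gives $e^{O(r)}\ll 1$ and the argument goes through.
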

\begin{proof}
For each $n \in (X/2,X]$, $z \in \mb{C}$ and $j = 1,2$, define the maps
\begin{align*}
\phi_n(z;h_j) := \frac{1}{h_j}\sum_{n-h_j < m \leq n} z^{(g_{\mc{C}}(m)-A_{g_{\mc{C}}}(X))/B_g(X)}.
%\Phi(z) := \frac{2}{X}\sum_{X/2<m \leq X} z^{(g_{\mc{C}}(n)-A_{g_{\mc{C}}}(X))/B_g(X)},
\end{align*}
Note that
$$
\frac{1}{h_j} \sum_{n-h_j < m \leq n} \left(\frac{g_{\mc{C}}(m)-A_{g_{\mc{C}}}(X)}{B_g(X)}\right) = \frac{d}{dz}\phi_n(z;h_j) \Big\rvert_{z = 1}.
$$
Recall that $h_1,h_2 \in \mb{Z}$. Thus, by Cauchy's integral formula we have
\begin{align*}
&\frac{1}{h_1}\sum_{n-h_1 < m \leq n} g_{\mc{C}}(m) - \frac{1}{h_2} \sum_{n-h_2 <m \leq n} g_{\mc{C}}(m) \\
&= \frac{1}{h_1}\sum_{n-h_1 < m \leq n} (g_{\mc{C}}(m)-A_{g_{\mc{C}}}(X)) - \frac{1}{h_2} \sum_{n-h_2 <m \leq n} (g_{\mc{C}}(m)-A_{g_{\mc{C}}}(X)) \\
&= \frac{B_g(X)}{2\pi i} \int_{|z-1| = r} (\phi_n(z;h_1) - \phi_n(z;h_2))\frac{dz}{z^2}.
\end{align*}
By Cauchy-Schwarz, we obtain
\begin{align*}
\Delta_{g_{\mc{C}}}(X;h_1,h_2) &\ll \frac{B_g(X)^2}{r^2} \max_{|z-1| = r} \frac{2}{X} \sum_{X/2 < n \leq X} \left|\phi_n(z;h_1) - \phi_n(z;h_2)\right|^2  \\
&= \frac{B_g(X)^2}{r^2} |z_0|^{-2\frac{A_{g_{\mc{C}}}(X)}{B_g(X)}} \frac{2}{X} \sum_{X/2<n \leq X} \left|\frac{1}{h_1} \sum_{n-h_1 < m \leq n} z_0^{\tilde{g}_{\mc{C}}(m)} - \frac{1}{h_2} \sum_{n-h_2 < m \leq n} z_0^{\tilde{g}_{\mc{C}}(m)}\right|^2,
\end{align*}
for some $z_0 \in \mb{C}$ with $|z_0-1| = r$. To complete the proof, note that by Taylor expansion and Lemma \ref{lem:removet0}(a),
$$
\frac{1}{h_1}\int_{n-h_1}^n u^{it_0} du = n^{it_0} \frac{1-(1-h_1/n)^{1+it_0}}{(1+it_0)h_1/n} = n^{it_0} + O(h_1/X),
$$
and also $m^{-it_0} = n^{-it_0} + O(|t_0|\log(n/m)) = n^{-it_0} + O(h_2/X)$ uniformly in $n-h_2 < m \leq n$. It follows that
$$
\frac{1}{h_2}\sum_{n-h_2 < m \leq n} z_0^{\tilde{g}_{\mc{C}}(m)} = \frac{1}{h_1}\int_{n-h_1}^n u^{it_0} du \cdot \frac{1}{h_2}\sum_{n-h_2 < m \leq n} z_0^{\tilde{g}_{\mc{C}}(m)} m^{-it_0} + O\left(\frac{1}{X}\sum_{n-h_2 < m \leq n} |z_0|^{\tilde{g}_{\mc{C}}(m)}\right).
$$
The error term is, by Shiu's theorem \cite[Thm. 1]{Shiu} and the Cauchy-Schwarz inequality,
$$
\ll \frac{h_2}{X} \exp\left(\sum_{p \leq X} \frac{|z_0|^{\tilde{g}_{\mc{C}}(p)}-1}{p}\right) \ll \frac{h_2}{X} \exp\left(\frac{r}{B_g(X)}\sum_{p \leq X} \frac{g(p)}{p}\right) \ll \frac{h_2}{X} \exp\left(r\sqrt{\log\log X}\right),
$$
which suffices to prove the claim.
\end{proof}

We are now in a position to apply Theorem \ref{thm:MRDB}.
\begin{cor}\label{cor:MRBDtoAdd}
Let $10 \leq h_1 \leq X/10$ be an integer and $h_2 := \lceil X/(\log X)^{c_1}\rceil$ as above. Then there is a constant $\gamma > 0$ such that
$$
\Delta_{g_{\mc{C}}}(X;h_1,h_2) \ll \delta^{-4}\left(\left(\frac{\log\log h}{\log h}\right)^{0.99} + (\log X)^{-\gamma}\right) B_g(X)^2.
$$
\end{cor}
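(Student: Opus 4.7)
The plan is to apply the divisor-bounded Matom\"{a}ki-Radziwi\l\l \ theorem (Theorem \ref{thm:MRDB}) to the multiplicative function $F_{z_0}$ output by Lemma \ref{lem:L2passtoMult}, with the specific choice $r := \delta^2$, and then to cancel the resulting Euler product $\mc{P}_{F_{z_0}}(X)^2$ against the weight $|z_0|^{-2A_{g_{\mc{C}}}(X)/B_g(X)}$ that appears in that lemma.

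First I would take $r := \delta^2$ in Lemma \ref{lem:L2passtoMult} and select the resulting point $z_0$ (with $|z_0-1| = \delta^2$), together with $t_0 = t_0(F_{z_0},X)$. The bounds established just before the statement of Theorem \ref{thm:MRDB} verify the hypotheses of that theorem for $f = F_{z_0}$ with $A = 0.99$ and $B = e$: since $g_{\mc{C}}$ is strongly additive, $|F_{z_0}(p^k)| = |F_{z_0}(p)| \leq (1+\delta^2)^{\delta^{-1}} \leq e$ for every prime power, which gives $|F_{z_0}(n)| \leq d_e(n)$ by multiplicativity; and the displayed lower bound $\sum_{u<p\leq v}|F_{z_0}(p)|/p \geq 0.99\sum_{u<p\leq v} 1/p$ gives condition (ii). By Lemma \ref{lem:removet0}(a),(b) we have $t_0 \ll 1/\log X$ and $H(F_{z_0};X) \asymp 1$, so in Theorem \ref{thm:MRDB} one may take $h_0 \asymp h_1 = h$, and $h_2 := \lceil X/(\log X)^{c_1}\rceil$ fits the required range. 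The theorem then bounds the mean-square $\mc{M}(z_0)$ appearing in Lemma \ref{lem:L2passtoMult} by
$$
\mc{M}(z_0) \ll \left(\left(\frac{\log\log h}{\log h}\right)^{0.99} + \left(\frac{\log\log X}{(\log X)^{c_2}}\right)^{0.99}\right)\mc{P}_{F_{z_0}}(X)^2.
$$

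Next I would show $|z_0|^{-2A_{g_{\mc{C}}}(X)/B_g(X)}\mc{P}_{F_{z_0}}(X)^2 \ll 1$. By Lemma \ref{lem:removet0}(c), it suffices to Taylor expand
$$
|z_0|^{2g_{\mc{C}}(p)/B_g(X)} - 1 = \frac{2\log|z_0|}{B_g(X)}\,g_{\mc{C}}(p) + O\!\left(\frac{(\log|z_0|)^2\,g_{\mc{C}}(p)^2}{B_g(X)^2}\right),
$$
which is valid because $|\log|z_0||\cdot g_{\mc{C}}(p)/B_g(X) \ll \delta^2\cdot\delta^{-1} = \delta$ is uniformly small on $p \in \mc{C}$. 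Summing $p^{-1}$ times this inequality over $p \leq X$, and using $\sum_{p\leq X} g_{\mc{C}}(p)^2/p \leq B_g(X)^2$ together with $|\log|z_0||\leq 2\delta^2$, one obtains
$$
\log \mc{P}_{F_{z_0}}(X)^2 = 2\log|z_0| \cdot \frac{A_{g_{\mc{C}}}(X)}{B_g(X)} + O(\delta^4),
$$
so the factor $|z_0|^{-2A_{g_{\mc{C}}}(X)/B_g(X)}$ cancels the leading contribution and only an $O(1)$ multiplicative error survives. Combining this with the previous estimate, the factor $r^{-2} = \delta^{-4}$ from Lemma \ref{lem:L2passtoMult}, and the error term $(\log X)^{-c_1+o(1)}$ of that lemma, and choosing any $\gamma \in (0,\min(c_1,\,0.99\,c_2))$ to absorb the $\log\log X$ nuisance factors, yields the claim.

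The main obstacle is precisely this cancellation: the factor $|z_0|^{-2A_{g_{\mc{C}}}(X)/B_g(X)}$ can be very large (potentially super-polynomial in $X$, since $A_{g_{\mc{C}}}/B_g$ is unbounded in general), and only the precise Taylor expansion guarantees that it matches the main term of $\log \mc{P}_{F_{z_0}}(X)^2$ up to an $O(\delta^4)$ error. This cancellation relies crucially on the prior truncation $\mc{C}$ (arranged in Lemma \ref{lem:L2IgnoreLrg}) which caps $g_{\mc{C}}(p)/B_g(X)$ at $\delta^{-1}$ and thereby keeps $|z_0|^{2g_{\mc{C}}(p)/B_g(X)}$ uniformly close to $1$; without this truncation the second-order term in the expansion could not be controlled, and the scheme would break down.
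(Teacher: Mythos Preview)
Your proof is correct and follows essentially the same route as the paper: apply Lemma~\ref{lem:L2passtoMult} with $r=\delta^2$, feed $F_{z_0}$ into Theorem~\ref{thm:MRDB} with $A=0.99$, $B=e$, and then cancel $\mc{P}_{F_{z_0}}(X)^2$ against the weight $|z_0|^{-2A_{g_{\mc{C}}}(X)/B_g(X)}$ via Taylor expansion. The paper inserts one small technical step you skipped, namely passing from the discrete average in Lemma~\ref{lem:L2passtoMult} to the integral average required by Theorem~\ref{thm:MRDB} (the difference is $\ll h_1^{-2}|z_0|^{-2A_{g_{\mc{C}}}(X)/B_g(X)}\mc{P}_{F_{z_0}}(X)^2$ via Shiu's theorem), and in the cancellation step one actually incurs $O(\delta^2)$ rather than $O(\delta^4)$ after replacing $\sum_{p\leq X}g_{\mc{C}}(p)/p$ by $A_{g_{\mc{C}}}(X)$ (the discrepancy is $O(B_g(X))$); neither point affects the conclusion.
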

\begin{proof}
%We combine Lemma \ref{lem:L2passtoMult} (with $r = \e^2$) with Theorem \ref{thm:MRDB}, applied to the function 
Let $z_0$ be chosen as in Lemma \ref{lem:L2passtoMult}. Since $h_1,h_2 \in \mb{Z}$ we may replace the discrete average in Lemma \ref{lem:L2passtoMult} by an integral average at the cost of an error term of size
$$
\ll \max_{x \in [X/2,X]} \left(\frac{1}{h_1} \left|\int_{x-h_1}^x u^{it_0} du - \int_{\llf x \rrf -h_1}^{\llf x \rrf} u^{it_0} du\right| \cdot \frac{1}{h_2} \sum_{\llf x \rrf - h_2 < n \leq \llf x \rrf} |F_{z_0}(n)|\right)^2 \ll \frac{1}{h_1^2} |z_0|^{-2\frac{A_{g_{\mc{C}}}(X)}{B_g(X)}} \mc{P}_{F_{z_0}}(X)^2,
$$
again by \cite[Thm. 1]{Shiu}.
Using the data from Lemma \ref{lem:removet0}, Theorem \ref{thm:MRDB} therefore yields
\begin{equation}\label{eq:penultCor46}
\Delta_{g_{\mc{C}}}(X;h,h_2) \ll \frac{B_g(X)^2}{\delta^{4}} \left(\left(\frac{\log\log h_1}{\log h_1}\right)^{0.99} + \left(\frac{\log\log X}{(\log X)^{c_2}}\right)^{0.99}\right) \cdot |z_0|^{-2\frac{A_{g_{\mc{C}}}(X)}{B_g(X)}} \prod_{p \leq X} \left(1+\frac{|z_0|^{2g_{\mc{C}}(p)/B_g(X)} - 1}{p}\right).
\end{equation}
Put $\rho := \log |z_0| \in (-10\delta^2,10\delta^2)$, say. As $g$ is strongly additive,
$$
A_{g_{\mc{C}}}(X) = \sum_{p \leq X} \frac{g_{\mc{C}}(p)}{p} + O\left(\sum_{p \leq X} \frac{g(p)}{p^2}\right) = \sum_{p \leq X} \frac{g_{\mc{C}}(p)}{p} + O(B_g(X)).
$$
Using the bounds $\log(1+x) \leq x$ and $|e^x-1-x| \leq |x|^2$ for $0 \leq x \leq 1/2$, the rightmost factors on the RHS of \eqref{eq:penultCor46} can thus be estimated as
\begin{align*}
&\ll \exp\left(\sum_{p \leq X} \frac{1}{p}\left(-2\rho\frac{g_{\mc{C}}(p)}{B_g(X)} + \log\left(1+\frac{e^{2\rho g_{\mc{C}}(p)/B_g(X)} - 1}{p}\right)\right)\right)\\
&\leq \exp\left(\sum_{p \leq X} \frac{1}{p}\left(e^{2\rho g_{\mc{C}}(p)/B_g(X)} - 1 - 2\rho \frac{g_{\mc{C}}(p)}{B_g(X)}\right)\right) \leq \exp\left(\frac{4\rho^2}{B_g(X)^2}\sum_{p \leq X} \frac{g_{\mc{C}}(p)^2}{p}\right) \ll 1.
\end{align*}
The claimed bound now follows with any $0 < \gamma < 0.99 c_2$ (changing the implicit constant as needed). 
\end{proof}

\begin{proof}[Proof of Theorem \ref{thm:MRL2}]
Let $g \in \mc{A}_s$. By Lemma \ref{lem:L2Red} we may assume that $g$ is non-negative and strongly-additive. In light of the discussion around \eqref{eq:swapXforh2}, on combining Lemma \ref{lem:L2IgnoreLrg} with Corollary \ref{cor:MRBDtoAdd} we obtain that for any $\e > 0$ there is $\delta >0$ and $X_0 = X_0(\delta)$ such that if $X \geq X_0$ then for any $10 \leq h \leq X/10$, 
$$
\Delta_g(X) \ll \left(\e + \delta^{-4}\left(\left(\frac{\log\log h}{\log h}\right)^{0.99} + (\log X)^{-\gamma}\right)\right)B_g(X)^2.
$$
Selecting $h \geq \exp\left(\delta^{-5} \e^{-2}\log(1/(\delta\e))\right)$, picking $X_0$ larger if necessary, we deduce that $\Delta_g(X) \ll \e B_g(X)$, and the claim follows.
\end{proof}

\section{Gaps and Moments} \label{sec:gaps}
In this section, we will prove Theorem \ref{thm:Elltype}, relating to the moments of the gaps in the sequence $\{g(n)\}_n$. 
\subsection{Small Gaps and Small First Moments are Equivalent: Proof of Theorem \ref{thm:Elltype}(a)}
We start by proving the following quantitative $\ell^1$ gap result.
%Assume that an additive function $g: \mb{N} \ra \mb{R}$ has a small first moment, in the sense that
%$$
%\frac{1}{X} \sum_{n \leq X} |g(n)-A_g(X)|  = o(B_g(X)),
%$$
%representing a non-trivial savings over the bound arising from the Cauchy-Schwarz Lemma \ref{lem:TK}. One swiftly derives from this and triangle inequality that
%$$
%\frac{1}{X} \sum_{n \leq X} |g(n)-g(n-1)| = o(B_g(X)),
%$$
%i.e., that the gaps $|g(n)-g(n-1)|$ are small in $\ell^1$. \\
%By applying Theorem \ref{thm:MRL1}, we may, perhaps surprisingly, prove a converse to this implication. 
\begin{prop} \label{prop:convGaps}
Let $0 < \e < 1/3$ and let $X$ be large. Let $g: \mb{N} \ra \mb{C}$ be an additive function. Assume that
$$
\frac{1}{Y} \sum_{n \leq Y} |g(n)-g(n-1)|  \ll \e B_g(X)
$$
for all $X/\log X < Y \leq X$. Then we have
$$
\frac{1}{X} \sum_{n \leq X} |g(n)-A_g(X)| \ll \left(\sqrt{\frac{\log\log(1/\e)}{\log(1/\e)}} + (\log X)^{-1/800}\right) B_g(X).
$$
% Moreover, there is a $\lambda \in [-\frac{B_g(X)}{\log X},\frac{B_g(X)}{\log X}]$ such that $B_{g_{\lambda}}(X) = o(B_g(X))$.
\end{prop}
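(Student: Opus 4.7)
The plan is to insert an intermediate short-interval average of length $h$ (to be chosen) between $g(n)$ and $A_g(X)$, decomposing
\[
g(n) - A_g(X) = \bigl(g(n) - M_h(n)\bigr) + \bigl(M_h(n) - M_X\bigr) + \bigl(M_X - A_g(X)\bigr),
\]
where $M_h(n) := h^{-1}\sum_{n-h<m\le n} g(m)$ and $M_X := (2/X)\sum_{X/2<m\le X} g(m)$. I would take absolute values, average over $n \in (X/2, X]$, estimate each term separately, and then extend to $n \in [1, X]$ by dyadic decomposition; the uniformity of the gap hypothesis over $X/\log X < Y \le X$ is exactly what allows the same argument to run on each dyadic scale $(Y/2, Y]$.

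The middle term is handled directly by Theorem~\ref{thm:MRL1}, which gives
\[
\frac{2}{X}\sum_{X/2<n\le X}|M_h(n) - M_X| \ll \left(\sqrt{\frac{\log\log h}{\log h}} + (\log X)^{-1/800}\right) B_g(X).
\]
The final deterministic term $M_X - A_g(X)$ is $O(B_g(X)/\sqrt{\log X})$ by Lemma~\ref{lem:mean} combined with Lemma~\ref{lem:Ag}. For the ``oscillation'' term, I would telescope: writing $g(n) - g(m) = \sum_{k=m+1}^{n}(g(k)-g(k-1))$ and swapping the order of summation yields
\[
g(n) - M_h(n) = \frac{1}{h}\sum_{k=n-h+2}^{n}(k-n+h-1)\bigl(g(k)-g(k-1)\bigr),
\]
so the weight $(k-n+h-1)/h$ is bounded by $1$; averaging over $n \in (X/2, X]$ and swapping summation gives the bound $\frac{h-1}{2X}\sum_{X/2-h<k\le X}|g(k)-g(k-1)| \ll h\e B_g(X)$ after invoking the hypothesis at $Y = X$.

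Collecting the three contributions, the average of $|g(n) - A_g(X)|$ over $(X/2, X]$ is $\ll \bigl(h\e + \sqrt{\log\log h/\log h} + (\log X)^{-1/800}\bigr) B_g(X)$. Balancing the first two error terms by choosing $h := \lceil \e^{-1}\sqrt{\log\log(1/\e)/\log(1/\e)}\rceil$, and noting $\log h \asymp \log(1/\e)$, matches the conclusion on the piece $(X/2, X]$. To extend the estimate to all of $[1, X]$, I would run the same argument on each dyadic range $(Y/2, Y]$ with $Y = X/2^j \ge X/\log X$; iterating Lemma~\ref{lem:Ag} yields $|A_g(Y_j) - A_g(X)| \ll j B_g(X)/\sqrt{\log X}$, whose dyadically weighted sum converges, while the residual range $n \le X/\log X$ contributes $\ll B_g(X)(\log X)^{-1/800}$ by Cauchy-Schwarz and Lemma~\ref{lem:TK}. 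When $\e$ is so small that the stated bound is dominated by the $(\log X)^{-1/800}$ term (and the chosen $h$ would violate $h \le X/100$), one simply takes $h = X/(\log X)^{1/3}$ directly.

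The main obstacle is the inherent sharpness (for an $L^1$-type input hypothesis) of the telescoping estimate $h\e B_g(X)$ on the oscillation term: each one-step gap $|g(k)-g(k-1)|$ enters with weight $O(1)$, so one pays $h$ times the average gap. Since this must be balanced against the Matom\"{a}ki-Radziwi\l\l \ savings $\sqrt{\log\log h/\log h}$, the shape $\sqrt{\log\log(1/\e)/\log(1/\e)}$ of the final bound is essentially forced, and any substantial improvement would require either an $L^2$ input (which the proposition does not assume) or a subtler extraction of cancellation between neighbouring gaps.
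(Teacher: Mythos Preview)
Your proposal is correct and follows essentially the same approach as the paper: telescope the gap hypothesis to compare $g(n)$ with a short average of length $h$, apply Theorem~\ref{thm:MRL1} to compare the short average with the long one, handle $M_X - A_g(X)$ via Lemma~\ref{lem:mean}, and extend to all of $[1,X]$ by dyadic decomposition with Tur\'{a}n--Kubilius on the tail. The only cosmetic difference is that the paper takes the simpler choice $h = \lfloor \min\{X/(2\log X),\e^{-1/2}\}\rfloor$, accepting a subdominant $\e^{1/2}$ from the telescoping rather than balancing it exactly against the $\sqrt{\log\log h/\log h}$ term as you do; both lead to the same final bound.
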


%\begin{rem}
%This result is in a similar spirit as a result of Elliott, which was subsequently refined by Hildebrand. Hildebrand's version of the result states that
%$$
%\frac{1}{X}\sum_{n \leq X} (g(n)-A_g(X))^2 \ll \sup_{X < Y \leq X^{4+\e}} \frac{1}{Y} \sum_{n \leq Y} (g(n)-g(n-1))^2.
%$$
%Clearly, therefore, if the $\ell^2$-averaged gap size is small over all scales $X < Y \leq X^{4+\e}$ then $B_{g_{\lambda}}(X)$ is smaller than expected for some $\lambda$.  Notice, however, that even if we had a bound like $\ll \e B_g(Y)^2$ for each $Y$ in this range, if $B_g(Y)$ grows extremely fast (so that, e.g., $B_g(X) = o(B_g(X^{4+\e}))$) then the work of Elliott and of Hildebrand is insufficient to deduce that $B_{g_{\lambda}}(X)$ is small. Once again, this is a problem caused by large values of $g(p)$. Proposition \ref{prop:convGaps}, on the other hand, is completely unconditonal, and requires no hypotheses on the behaviour of $g$ beyond additivity.
%\end{rem}

\begin{proof}
%[Proof of Proposition \ref{prop:convGaps}]
% ((ii) $\Rightarrow$ (i)) By the trivial bound $(a+b)^2 \leq 2((a-c)^2+(b-c)^2)$ and the Tur\'{a}n-Kubilius inequality, we find that
% $$
% \frac{1}{X} \sum_{n \leq X} (g(n)-g(n-1))^2  \ll \lambda^2 \frac{1}{X} \sum_{n \leq X} \frac{1}{n^2} + \frac{1}{X}\sum_{n \leq X} (g_{\lambda}(n)-A_{g_{\lambda}}(X))^2 \ll \frac{\lambda^2}{X} + B_{g_{\lambda}}(X)^2 = o(B_g(X)^2).
% $$
%((i) $\Rightarrow$ (ii)) This is more interesting. 
% Assume that there is a function $\xi(X) \ra \infty$ such that
% $$
% \frac{1}{X} \sum_{n \leq X} (g(n)-g(n-1))^2 \leq B_g(X)^2/\xi(X).
% $$
%Let $\e \in (0,1/3)$ and suppose $g$ is an additive function that satisfies
%$$
%\frac{1}{Y}\sum_{n \leq Y} |g(n)-g(n-1)| \ll \e B_g(X)
%$$
%uniformly over $X/\log X < Y \leq X$, for large $X$.  
Let $h = \llf \min\{X/(2\log X), \e^{-1/2}\} \rrf$, and let $X/\log X < Y \leq X$. By the triangle inequality, for any $1 \leq m \leq h$ we obtain
\begin{align*}
\frac{1}{Y} \sum_{h < n \leq Y} |g(n)-g(n-m)| &\leq \frac{1}{Y} \sum_{0 \leq j \leq m-1} \sum_{j < n \leq Y} |g(n-j)-g(n-j-1)|  \\
&\ll \frac{h}{Y} \sum_{1 \leq n \leq Y} |g(n)-g(n-1)| \ll \e^{1/2}B_g(X).
\end{align*}
Averaging over $1 \leq m \leq h$ and then applying the triangle inequality once again, we obtain
$$
\frac{1}{Y} \sum_{h < n \leq Y} \left|g(n)-\frac{1}{h} \sum_{1 \leq m \leq h} g(n-m)\right| \ll \e^{1/2}B_g(X).
$$
Applying Theorem \ref{thm:MRL1},
$$
\frac{1}{Y} \sum_{Y/2<n \leq Y} \left|g(n)-\frac{2}{Y}\sum_{Y/2 < m \leq Y} g(m)\right| \ll B_g(X)\left(\sqrt{\frac{\log\log h}{\log h}} + (\log X)^{-1/800}\right).
$$
For each such $Y$, Lemma \ref{lem:mean} yields
$$
\frac{2}{Y}\sum_{Y/2 < m \leq Y} g(m) = A_g(X) + O\left(\frac{B_g(X)}{\sqrt{\log X}}\right),
$$
and so upon applying Lemma \ref{lem:TK} to $[h,X/\log X]$ and expanding $[X/\log X,X]$ into dyadic segments, we obtain
\begin{align*}
\frac{1}{X}\sum_{h < n \leq X} \left|g(n)-A_g(X)\right| 
%&\ll B_g(X)\left(\frac{1}{\sqrt{\log X}} + \sqrt{\frac{\log\log h}{\log h}} + (\log X)^{-1/800}\right) \\
&\ll B_g(X)\left(\sqrt{\frac{\log\log h}{\log h}} + (\log X)^{-1/800}\right).
\end{align*}
Reintroducing the segment up to $h$ together with Cauchy-Schwarz and Lemma \ref{lem:TK}, we conclude that
$$
\frac{1}{X}\sum_{n \leq X} |g(n)-A_g(X)| \ll B_g(X)\left( \left(\frac{h}{X}\right)^{1/2} + \sqrt{\frac{\log\log h}{\log h}} + (\log X)^{-1/800}\right),
$$
and so as $h \leq X/\log X$ this implies the claim. 
% For the second, we apply Lemma \ref{lem:Ruzsa} to obtain
% $$
% \frac{1}{X}\sum_{n \leq X} |g(n)-A_g(X)| \asymp \lambda^2 + B_{g_{\lambda}}(X)^2,
% $$
% where $\lambda = \lambda(X)$ can be chosen to satisfy
% $$
% \lambda(X) = \frac{2}{(\log X)^2} \sum_{p \leq X} \frac{g(p)\log p}{p}.
% $$
% A trivial estimation based on the Cauchy-Schwarz inequality yields
% $$
% |\lambda(X)| \leq \frac{2}{(\log X)^2}\sum_{p^k \leq X} \frac{|g(p^k)| \log p}{p^k} \ll \frac{B_g(X)}{\log X} \left(\frac{1}{(\log X)^2} \sum_{p^k \leq X} \frac{(\log p)^2}{p^k}\right) \ll \frac{B_g(X)}{\log X},
% $$
% as required. Thus, to conclude, for this choice of $\lambda$ we deduce that
% $$
% B_{g_{\lambda}}(X)^2 \ll B_g(X)^2\left(\frac{\log\log h}{\log h} + (\log X)^{-1/800}\right),
% $$
% which implies the desired claim.
\end{proof}

\begin{proof}[Proof of Theorem \ref{thm:Elltype}(a)]
By the triangle inequality, we see that if $\frac{1}{X}\sum_{n \leq X} |g(n)-A_g(X)| = o(B_g(X))$ then
$$
\frac{1}{X}\sum_{n \leq X} |g(n)-g(n-1)| \leq \frac{1}{X}\sum_{n \leq X} |g(n)-A_g(X)| + \frac{1}{X}\sum_{m \leq X-1} |g(m)-A_g(X)| = o(B_g(X)).
$$
The converse implication follows immediately from Proposition \ref{prop:convGaps}.
\end{proof}

\subsection{A Conditional Gap Theorem for the Second Moment}
In parallel to the results of the previous subsection, we will apply Theorem \ref{thm:MRL2} to prove the following result. 
\begin{prop} \label{prop:Elltype}
%\label{thm:Elltype}
Let $g \in \mc{A}_s$
%: \mb{N} \ra \mb{C}$ be a completely additive function satisfying $B_g(X) \ra \infty$, and such that
%$$
%\limsup_{X \ra \infty} \frac{1}{B_g(X)^2} \sum_{\substack{p \leq X \\ |g(p)| > \e^{-1}B_g(X)}} \frac{g(p)^2}{p} = o_{\e\ra 0^+}(1).
%$$
Then for any integer $10 \leq h \leq X/10$ we have
$$
\frac{1}{X}\sum_{n \leq X} |g(n)-A_g(X)|^2 \ll \frac{h^2}{X}\max_{X/\log X < Y \leq X} \sum_{n \leq Y} |g(n)-g(n-1)|^2 + o_{h \ra \infty}(B_g(X)^2).
$$
\end{prop}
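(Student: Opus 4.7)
The plan is to parallel the proof of Proposition \ref{prop:convGaps}, substituting Theorem \ref{thm:MRL2} for Theorem \ref{thm:MRL1} and replacing the $\ell^1$ triangle inequality by Cauchy--Schwarz in $\ell^2$. The key new ingredient is a deterministic $\ell^2$ telescoping estimate, which is the sole input carrying the gap term on the right-hand side of the statement.

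First I would establish the bound
$$
\sum_{n \leq Y}\left|g(n) - \frac{1}{h}\sum_{n-h<m\leq n} g(m)\right|^2 \leq h^2 \sum_{m \leq Y} |g(m)-g(m-1)|^2,
$$
valid for all $Y \geq h$. This follows by applying Cauchy--Schwarz once to move the square inside the $h$-average, then again to control $|g(n)-g(n-k)|^2 \leq k\sum_{j=0}^{k-1}|g(n-j)-g(n-j-1)|^2$, after which swapping the order of summation produces the factor $h^2$. Next, for each dyadic scale $Y \in (\max\{X/\log X,\,100h\},\,X]$, Theorem \ref{thm:MRL2} yields
$$
\frac{2}{Y}\sum_{Y/2 < n \leq Y}\left|\frac{1}{h}\sum_{n-h < m \leq n} g(m) - \frac{2}{Y}\sum_{Y/2 < m \leq Y} g(m)\right|^2 = o_{h\to\infty}(B_g(X)^2),
$$
where I use $B_g(Y) \leq B_g(X)$. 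Combining Lemma \ref{lem:mean} at scale $Y$ with an iteration of Lemma \ref{lem:Ag} through the $O(\log\log X)$ dyadic steps separating $Y$ from $X$ shows that the long average equals $A_g(X) + o(B_g(X))$.

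Merging these via $(a+b+c)^2 \leq 3(a^2+b^2+c^2)$, summing over dyadic scales, dividing by $X$, and handling the residual segment $n \leq \max\{X/\log X,\,100h\}$ directly by Lemma \ref{lem:TK} together with Lemma \ref{lem:Ag} produces the claimed bound. The residual contributes at most $O\bigl((h/X + 1/\log X)B_g(X)^2\bigr)$, which is $o_{h\to\infty}(B_g(X)^2)$ since $h = o(X)$. The main obstacle is quantitative bookkeeping: the MRL2 error is incurred at each of $O(\log\log X)$ dyadic scales, so one must verify that the explicit decay rate from Corollary \ref{cor:MRBDtoAdd} (essentially a power of $1/\log h$ plus $(\log X)^{-\gamma}$) absorbs this logarithmic overhead, which it does once $h = h(X) \to \infty$. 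The factor $h^2$ that appears in the proposition is intrinsic to the telescoping of the first step and is unlikely to be improved by this approach.
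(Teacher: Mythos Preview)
Your proposal is correct and follows essentially the same route as the paper: a telescoping/Cauchy--Schwarz bound for $|g(n)-\tfrac{1}{h}\sum_{n-h<m\leq n}g(m)|^2$ producing the $h^2$ factor, Theorem~\ref{thm:MRL2} on each dyadic block $(Y/2,Y]$ with $X/\log X<Y\leq X$, and Lemma~\ref{lem:TK} on the initial segment. One small remark: your concern about a $\log\log X$ overhead from the dyadic decomposition is unnecessary, since after multiplying the Theorem~\ref{thm:MRL2} bound by $Y$ and summing, the scales contribute $\sum_{\text{dyadic }Y\leq X} Y\ll X$ geometrically; no appeal to the explicit decay rate in Corollary~\ref{cor:MRBDtoAdd} is needed.
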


\begin{proof}[Proof of Proposition \ref{prop:Elltype}]
%We begin with the first assertion. 
Given our assumptions about $g$, we may apply Theorem \ref{thm:MRL2} to obtain
$$
\sum_{Y/2 < n \leq Y} \left|\frac{1}{h}\sum_{n-h < m \leq n} g(m) - \frac{2}{Y} \sum_{Y/2 < m \leq Y} g(m)\right|^2 = o_{h \ra \infty}(YB_g(X)^2),
$$
for any $X/\log X < Y \leq X$. Applying Lemma \ref{lem:mean}, we deduce that
$$
\frac{1}{X} \sum_{Y/2 < n \leq Y} |g(n)-A_g(Y)|^2 \ll \frac{1}{X} \sum_{Y/2 < n \leq Y} \left|g(n) - \frac{1}{h} \sum_{n-h < m \leq n} g(m)\right|^2 + o_{h \ra \infty}\left(\frac{Y}{X}B_g(X)^2\right).
$$
We of course have
$$
g(n) - \frac{1}{h} \sum_{n-h < m \leq n} g(m) = \frac{1}{h} \sum_{0 \leq j \leq h-1} (g(n)-g(n-j)) = \sum_{0 \leq j \leq h-1} \left(1-\frac{j}{h}\right) (g(n-j)-g(n-j-1)).
$$
Squaring both sides and applying Cauchy-Schwarz, we obtain
\begin{align*}
\frac{1}{X} \sum_{Y/2 < n \leq Y} \left|g(n)-\frac{1}{h}\sum_{n-h < m \leq n} g(m)\right|^2 &\ll \frac{h}{X} \sum_{0\leq j \leq h-1} \sum_{Y/2 < n \leq Y} (g(n-j)-g(n-j-1))^2 \\
&\leq \frac{h^2}{X} \sum_{Y/3 < n \leq Y} (g(n)-g(n-1))^2.
\end{align*}
Combined with the previous estimates, we obtain
$$
\frac{1}{X} \sum_{Y/2 < n \leq Y} |g(n)-A_g(Y)|^2 \ll \frac{h^2}{X} \sum_{n \leq Y} |g(n)-g(n-1)|^2 + o_{h \ra \infty}\left(\frac{Y}{X}B_g(X)^2\right).
$$
We may now complete the proof of the claim by splitting $[1,X]$ into the segments $[1,X/\log X]$ and $[X/\log X,X]$, applying Lemma \ref{lem:TK} trivially to the first segment, and bounding dyadic segments $(Y,2Y] \subset [X/\log X,X]$ using the above arguments. 
\end{proof}
\begin{proof}[Proof of Theorem \ref{thm:Elltype}(b)]
To obtain the theorem, we note first the trivial estimate
$$
\frac{1}{X}\sum_{n \leq X} |g(n)-g(n-1)|^2 \ll \frac{1}{X}\sum_{n \leq X} |g(n)-A_g(X)|^2 + \frac{1}{X}\sum_{m \leq X-1} |g(m)-A_g(X)|^2 \ll \frac{1}{X}\sum_{n \leq X} |g(n)-A_g(X)|^2,
$$
so that if the RHS is $o(B_g(X)^2)$ then so is the LHS. Conversely, suppose that
$$
\frac{1}{Y} \sum_{n \leq Y} |g(n)-g(n-1)|^2 \leq \xi(Y)B_g(Y)^2,
$$
for some function $\xi(Y) \ra 0$, for all $Y$ large enough. Suppose $\max_{X/\log X < Y \leq X} \xi(Y) = \xi(Y_0)$, and put $h := \lfloor \xi(Y_0)^{-1/3}\rfloor.$ 
By Proposition \ref{prop:Elltype},
$$
\frac{1}{X} \sum_{n \leq X} |g(n)-A_g(X)|^2 \ll \xi(Y_0)^{-2/3} \cdot \xi(Y_0) B_g(X)^2 + o(B_g(X)^2) = o(B_g(X)^2)
$$
as $X \ra \infty$, as required.
\end{proof}

\section{Erd\H{o}s' Almost Everywhere Monotonicity Problem}
Let $g: \mb{N} \ra \mb{R}$ be additive. For convenience, set $g(0) := 0$, and recall the definitions 
$$
\mc{B} := \{n \in \mb{N} : g(n) < g(n-1)\}, \quad \quad \mc{B}(X) := \mc{B} \cap [1,X].
$$ 
%When $\mc{B} \in \{\emptyset,\mb{N}\}$, i.e., when $g$ is everywhere increasing or everywhere decreasing, a famous result of Erd\H{o}s \cite[p. 3]{Erd} shows that $g(n) = c\log n$ for all $n \in \mb{N}$, for some real number $c$. This is one of a number of results proved in \cite{Erd} that relate to the value distribution of additive functions. \\
%Several conjectures are also stated in \cite{ErdAdd}, the majority of which have been worked out by various authors, notably Kat\'{a}i \cite{Kat}, Wirsing \cite{Wir} and Hildebrand, with remarkable strengthenings and generalizations due to Elliott \cite[e.g. Ch. 11]{Ell}s. A notable exception, which remains open, is the following.  
%We will assume in this section that:
%In this section we will assume a slightly more string
In this section, we will pursue the study of functions $g$ such that $|\mc{B}(X)| = o(X)$.
% Leverage the results of the previous sections, we will establish some information.
\subsection{Small support of variance is equivalent to small prime support}
To prove Theorem \ref{thm:iterStep} we will eventually need control over a sparsely-supported sum such as
$$
\frac{1}{X} \sum_{n \in \mc{B}(X)} |g(n)-A_g(X)|^2,
$$
with the objective of obtaining savings over the trivial bound $O(B_g(X)^2)$ from Lemma \ref{lem:TK}. The purpose of this subsection is to determine sufficient conditions in order to achieve a non-trivial estimate of this kind. \\ 
Given a set of positive integers $\mc{S}$, a positive real number $X \geq 1$ and a prime power $p^k \leq X$, write $\mc{S}(X) := \mc{S} \cap [1,X]$ and $\mc{S}_{p^k}(X) := \{n \in \mc{S}(X) : p^k |n\}$.

\begin{prop}\label{prop:locLg}
Let $g: \mb{N} \ra \mb{C}$ be an additive function belonging to $\mc{A}$. Let $\mc{S}$ be a set of integers with $|\mc{S}(X)| = o(X)$, and let $\e \in (0,1)$ satisfy the conditions
$$
|\mc{S}(X)|/X < \e/2, \quad\quad \sum_{\ss{p^k \leq X \\ k \geq 2}} \frac{|g(p)|^2 + |g(p^k)|^2}{p^k} \leq \e B_g(X)^2.
$$
Then the following bound holds:
$$
\frac{1}{X}\sum_{n \in \mc{S}(X)} |g(n)-A_g(X)|^2 \ll B_g(X)^2\left(\e + \e^{-1} \left(\frac{|\mc{S}(X)|}{X}\right)^{1/2}\right) + \sum_{\ss{p \leq X \\ |\mc{S}_p(X)| > \e X/p}} \frac{|g(p)|^2}{p}.
$$
Moreover, we have
\begin{equation}\label{eq:sparseBp}
\sum_{\ss{p \leq X \\ |\mc{S}_p(X)| > \e X/p}} \frac{1}{p} \ll \e^{-2} \frac{|\mc{S}(X)|}{X}.
\end{equation}
%where $\mc{B}_p(X) := \{n \in \mc{B}(X) : p|n\}$.
\end{prop}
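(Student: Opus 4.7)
The plan is to handle the main bound and the auxiliary claim on $\sum_{p \in T} 1/p$ separately, where $T := \{p \leq X : |\mc{S}_p(X)| > \e X/p\}$ and $\alpha := |\mc{S}(X)|/X$. For the main bound, I would first use Lemma \ref{lem:redtoSA}(b) to decompose $g = g^{\ast} + G$, where $g^{\ast}$ is strongly additive with $g^{\ast}(p) = g(p)$ and $G$ is supported on prime powers $p^k$ with $k \geq 2$. The second hypothesis yields $B_G(X)^2 \ll \e B_g(X)^2$, and a Cauchy--Schwarz argument as in Lemma \ref{lem:redtoSA} gives $|A_G(X)| \ll B_G(X)$, so Lemma \ref{lem:TK} applied to $G$ absorbs the $G$-contribution into the desired $\e B_g(X)^2$ term. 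It then remains to estimate the corresponding second moment over $\mc{S}(X)$ for $g^{\ast}$.

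Writing $\xi_p(n) := 1_{p|n} - 1/p$, the identity $g^{\ast}(n) - A_{g^{\ast}}(X) = \sum_{p\leq X} g(p)\xi_p(n) + O(B_g(X))$ gives, upon squaring and summing over $\mc{S}(X)$,
$$
\sum_{n \in \mc{S}(X)} |g^{\ast}(n) - A_{g^{\ast}}(X)|^2 = \sum_{p_1, p_2 \leq X} g(p_1)\overline{g(p_2)}\, E(p_1,p_2) + O(\alpha X B_g(X)^2),
$$
with $E(p_1, p_2) := \sum_{n \in \mc{S}(X)} \xi_{p_1}(n)\overline{\xi_{p_2}(n)}$. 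For the diagonal terms, $E(p,p) \leq |\mc{S}_p(X)| + |\mc{S}(X)|/p^2$; summing against $|g(p)|^2$ and splitting by $p \in T$ versus $p \notin T$, the primes outside $T$ contribute $\ll \e X B_g(X)^2$ (using $|\mc{S}_p(X)| \leq \e X/p$), while the primes in $T$ yield precisely the term $X\sum_{p \in T}|g(p)|^2/p$. For the off-diagonal contribution, I would apply a global Cauchy--Schwarz,
$$
\sum_{n \in \mc{S}(X)} |g^{\ast}(n) - A_{g^{\ast}}(X)|^2 \leq |\mc{S}(X)|^{1/2}\Bigl(\sum_{n\leq X} |g^{\ast}(n) - A_{g^{\ast}}(X)|^4\Bigr)^{1/2},
$$
controlling the fourth moment via a Hildebrand-type estimate after truncating $g^{\ast}$ at the cutoff $|g(p)| \leq \e^{-1} B_g(X)$; this yields a bound of order $\e^{-2} X B_g(X)^4$ for the truncated part, producing the $\e^{-1}\sqrt{\alpha}\,B_g(X)^2$ component, while the remainder (primes with $|g(p)| > \e^{-1}B_g(X)$) is absorbed into the explicit $\sum_{p \in T}|g(p)|^2/p$ term.

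For the second inequality, set $A := \sum_{p \in T}|\mc{S}_p(X)|$ and $B := \sum_{p \in T} 1/p$. Double-counting yields $A = \sum_{n \in \mc{S}(X)} \omega_T(n)$ where $\omega_T(n) := |\{p \in T : p|n\}|$, and Cauchy--Schwarz combined with the trivial bound $|\mc{S}_{p_1 p_2}(X)| \leq X/(p_1 p_2)$ on the off-diagonal of $\omega_T^2$ gives
$$
A^2 \leq |\mc{S}(X)|\Bigl(A + \sum_{p_1 \neq p_2 \in T} |\mc{S}_{p_1 p_2}(X)|\Bigr) \leq |\mc{S}(X)|(A + XB^2).
$$
The defining property of $T$ furnishes $\e X B \leq A$, while $A \leq XB$ is immediate; substituting these into the inequality above produces $(\e^2 X - |\mc{S}(X)|)\, X B^2 \leq |\mc{S}(X)|\, XB$, which in the relevant regime $\alpha \ll \e^2$ solves to $B \ll \alpha/\e^2$ (the complementary range $\alpha \gtrsim \e^2$ being handled by the trivial bound $B \leq \log\log X + O(1)$ once one verifies it is of the required order).

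The main obstacle is the off-diagonal estimate in the first bound. For a general sparse set $\mc{S}(X)$, the events $\{p \mid n\}$ can be strongly correlated, so naively bounding $|E(p_1, p_2)|$ would blow up by a factor of $\log\log X$. The global Cauchy--Schwarz + fourth-moment approach is what extracts the $\e^{-1}\sqrt{\alpha}$ savings, and the calibration of the cutoff at $|g(p)| > \e^{-1} B_g(X)$ — so that the contribution from abnormally large primes is exactly captured by the $\sum_{p \in T}|g(p)|^2/p$ term — is the central technical balance, since the hypothesis $g \in \mc{A}$ gives no \emph{a priori} control on $\max_{p \leq X}|g(p)|$.
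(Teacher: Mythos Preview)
Your reduction to a strongly additive function and your treatment of the diagonal terms are fine and match the paper. The gap is in the off-diagonal step. You propose to bound the full second moment over $\mc{S}(X)$ by Cauchy--Schwarz against a fourth moment of the truncated function $g_1$ (primes with $|g(p)|\le \e^{-1}B_g(X)$), and then assert that the remainder $g_2$ ``is absorbed into the explicit $\sum_{p\in T}|g(p)|^2/p$ term''. This last claim is false: the set of primes with $|g(p)|>\e^{-1}B_g(X)$ has nothing to do with $T=\{p:|\mc{S}_p(X)|>\e X/p\}$, since the former depends only on $g$ and the latter only on $\mc{S}$. Under the hypotheses of the proposition there is no control whatsoever on $B_{g_2}(X)^2=\sum_{|g(p)|>\e^{-1}B_g(X)}|g(p)|^2/p$, so the $g_2$-contribution to $\frac{1}{X}\sum_{n\in\mc{S}(X)}|g_2(n)-A_{g_2}(X)|^2$ (which Tur\'an--Kubilius only bounds by $B_{g_2}(X)^2$) cannot be majorised by either $\e B_g(X)^2$ or $\sum_{p\in T}|g(p)|^2/p$. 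The fourth-moment route therefore does not close without an additional argument for the off-diagonal of $g_2$, and at that point you are back to needing a sieve-type input.

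The paper avoids this entirely by never truncating $g$. It splits instead at $z=X^{1/4}$: for the small-prime off-diagonal it uses the identity $(p1_{p|n}-1)(q1_{q|n}-1)=\sum^{\ast}_{c\pmod{pq}}e(cn/pq)$ together with the large sieve (well-spaced fractions $c/pq$ with $pq\le\sqrt X$) to get the $\sqrt{|\mc{S}(X)|/X}\,B_g(X)^2$ saving directly, with no dependence on $\max_p|g(p)|$; for the large-prime part it expands the square and controls the off-diagonal by the dual Tur\'an--Kubilius inequalities (Lemmas \ref{lem:DualTK1} and \ref{lem:DualTK2}) applied to $a(n)=1_{\mc{S}}(n)$, which is exactly what singles out the set $T$ and produces $\sum_{p\in T}|g(p)|^2/p$.

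Your argument for \eqref{eq:sparseBp} is also incomplete: the inequality $(\e^2 X-|\mc{S}(X)|)B\le |\mc{S}(X)|$ that you derive gives nothing when $\e^2\le \alpha<\e/2$ (a nonempty range once $\e<1/2$), and the ``trivial bound $B\le\log\log X$'' is not $O(\alpha/\e^2)$ there. The paper obtains \eqref{eq:sparseBp} uniformly in one line from Lemma \ref{lem:DualTK1}: for $p\in T$ one has $|\mc{S}_p(X)|-|\mc{S}(X)|/p>\e X/(2p)$, hence $1/p\ll (\e X)^{-2}p\,|\mc{S}_p(X)-|\mc{S}(X)|/p|^2$, and summing gives $B\ll (\e X)^{-2}\cdot X|\mc{S}(X)|$.
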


\begin{rem}
%According to the Tur expresses that the second centred moment of $g$ is controlled by $B_g(X)^2$. In words, 
Proposition \ref{prop:locLg} states that if the bulk of the contribution to the variance of $g(n)$ occurs along a sparse subset $\mc{S}(X) \subseteq [1,X]$ then $B_g(X)$ is dominated by primes $p$ of which $\mc{S}(X)$ has many multiples $\leq X$. For sufficiently small primes $p$ this is ruled out by the sparseness of $\mc{S}(X)$, but it may occur for large enough primes. 
%We will shortly see that such primes must actually be quite large (at least $X^{0.49
%}$) whenever a function is monotone for most integers $n$.
\end{rem}
Our proof will proceed by applying variants of the large sieve and Tur\'{a}n-Kubilius inequalities. The first of these is due to Elliott.

\begin{lem}[Elliott's Dual Tur\'{a}n-Kubilius Inequality] \label{lem:DualTK1}
Let $\{a(n)\}_n \subset \mb{C}$ be a sequence and let $X \geq 2$. Then
$$
\sum_{p \leq X} p\left|\sum_{\substack{n \leq X \\ p|n}} a(n) - \frac{1}{p} \sum_{n \leq X} a(n)\right|^2 \ll X \sum_{n\leq X} |a(n)|^2.
$$
\end{lem}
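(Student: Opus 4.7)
I would recognize the inequality as the Hilbert-space \emph{dual} of the Turán--Kubilius inequality (Lemma \ref{lem:TK}), and deduce it via the standard fact that $\|T^*\| = \|T\|$ for a bounded operator between Hilbert spaces. To set this up, I would introduce two inner product spaces: $H_1 = \mb{C}^{\{p \leq X\}}$ equipped with the weighted inner product $\langle c, c'\rangle_1 := \sum_{p \leq X} c_p \overline{c'_p}/p$, and $H_2 = \mb{C}^{\{n \leq X\}}$ with the usual inner product. The relevant operator is $T : H_1 \to H_2$ defined by
\[
(Tc)(n) := \sum_{\substack{p \leq X \\ p \mid n}} c_p - \sum_{p \leq X} \frac{c_p}{p}.
\]

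\textbf{Bounding $\|T\|$.} For each fixed $c \in H_1$, the function $g_c(n) := \sum_{p \mid n,\, p \leq X} c_p$ is strongly additive with $g_c(p) = c_p$. One readily checks that $B_{g_c}(X)^2 \asymp \sum_{p \leq X} |c_p|^2/p = \|c\|_{H_1}^2$, and that $\bigl|A_{g_c}(X) - \sum_{p \leq X} c_p/p\bigr|$ is governed by a negligible prime-power remainder bounded by $\|c\|_{H_1}$ via Cauchy--Schwarz. Applying Lemma \ref{lem:TK} to $g_c$ and shifting the centering from $A_{g_c}(X)$ to $\sum_p c_p/p$ by the triangle inequality, I would conclude $\|Tc\|_{H_2}^2 \ll X \|c\|_{H_1}^2$, i.e.\ $\|T\|^2 \ll X$.

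\textbf{Adjoint and duality.} Next, I would compute $T^* : H_2 \to H_1$ by expanding $\langle Tc, a\rangle_{H_2}$, swapping the order of summation, and matching against $\langle c, T^*a\rangle_{H_1} = \sum_p c_p \overline{(T^*a)(p)}/p$. The $1/p$-weight built into $\langle \cdot,\cdot\rangle_{H_1}$ is precisely what produces the factor $p$ on the left-hand side of the claim: one obtains $(T^*a)(p) = p\bigl(\sum_{n \leq X,\, p \mid n} a(n) - \frac{1}{p}\sum_{n \leq X} a(n)\bigr)$, so that
\[
\|T^*a\|_{H_1}^2 = \sum_{p \leq X} p \left|\sum_{\substack{n \leq X \\ p \mid n}} a(n) - \frac{1}{p} \sum_{n \leq X} a(n)\right|^2.
\]
Invoking $\|T^*\| = \|T\|$ then yields $\|T^*a\|_{H_1}^2 \leq \|T\|^2 \|a\|_{H_2}^2 \ll X \sum_{n \leq X} |a(n)|^2$, which is exactly the asserted bound.

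\textbf{Anticipated obstacle.} There is no substantial obstacle; this is essentially a textbook duality deduction, and indeed is the classical route by which Elliott originally obtained the inequality. The only minor point of care is the mismatch between the canonical centering $A_{g_c}(X)$ of Lemma \ref{lem:TK} (which incorporates prime-power contributions) and the cleaner centering $\sum_{p \leq X} c_p/p$ natural to $T$; this is dispatched by a crude Cauchy--Schwarz estimate on $\sum_{p} |c_p|/p^2$, with the error absorbed into the $\ll$ constant. A direct combinatorial expansion of the left-hand side is also conceivable but loses a spurious $\log X$ factor unless one carefully exploits the cancellation between the diagonal, cross, and constant terms, which is precisely what the duality argument handles for free.
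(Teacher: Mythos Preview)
Your duality argument is correct and is indeed the classical route by which Elliott derived this inequality. The paper itself does not give a proof at all: it simply cites \cite[Lem.~5.2]{Ell} (with $\sigma = 0$) as a known result, so your proposal supplies strictly more detail than the paper does while following the same underlying idea.
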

\begin{proof}
This is \cite[Lemma 5.2]{Ell} (taking $\sg = 0$ there).
\end{proof}

A cheaper variant of the latter result, for divisibility by products of two large primes, is as follows. 
\begin{lem}[Variant of Dual Tur\'{a}n-Kubilius] \label{lem:DualTK2}
Let $\{a(n)\}_n \subset \mb{C}$. Then
$$
\sum_{\substack{X^{1/4} < p,q \leq X \\ p \neq q}} pq\left|\sum_{\substack{n \leq X \\ pq|n}} a(n) - \frac{1}{pq} \sum_{n \leq X} a(n)\right|^2 \ll X \sum_{n \leq X} |a(n)|^2.
$$
\end{lem}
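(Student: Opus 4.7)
The plan is to dispense with any delicate cancellation and argue by the pointwise inequality $|A-B|^2 \leq 2|A|^2 + 2|B|^2$, applied with $A = A(p,q) := \sum_{n \leq X,\, pq \mid n} a(n)$ and $B(p,q) := (pq)^{-1}\sum_{n \leq X} a(n)$. This splits the left-hand side into a ``diagonal'' piece
$$
\mc{D} := \sum_{\ss{X^{1/4} < p,q \leq X \\ p \neq q}} pq\, |A(p,q)|^2
$$
and a ``mean'' piece
$$
\mc{M} := \sum_{\ss{X^{1/4} < p,q \leq X \\ p \neq q}} \frac{1}{pq}\left|\sum_{n \leq X} a(n)\right|^2,
$$
which I treat separately. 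Compared to Elliott's sharper Lemma \ref{lem:DualTK1}, the point is that the lower cutoff $p, q > X^{1/4}$ is doing essentially all of the work, and the crudest bounds suffice.

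For $\mc{D}$, Cauchy--Schwarz applied to the inner sum (which has length $\lfloor X/(pq) \rfloor \leq X/(pq)$) yields $pq\,|A(p,q)|^2 \leq X \sum_{n \leq X,\, pq\mid n}|a(n)|^2$. Swapping the order of summation gives
$$
\mc{D} \leq X \sum_{n \leq X} |a(n)|^2 \, N(n),
$$
where $N(n)$ counts ordered pairs of distinct primes $p,q > X^{1/4}$ with $pq \mid n$. If $n \leq X$ had $k$ distinct prime divisors each exceeding $X^{1/4}$, their product would satisfy $X^{k/4} < n \leq X$, forcing $k \leq 3$; hence $N(n) \leq 3\cdot 2 = 6$, giving $\mc{D} \ll X \sum_{n \leq X} |a(n)|^2$.

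For $\mc{M}$, Mertens' theorem gives $\sum_{X^{1/4} < p \leq X} 1/p = \log 4 + o(1) = O(1)$, so $\sum_{p \neq q} 1/(pq) \ll 1$; combined with the trivial bound $|\sum_{n \leq X} a(n)|^2 \leq X \sum_{n \leq X} |a(n)|^2$ (Cauchy--Schwarz), this yields $\mc{M} \ll X \sum_{n \leq X} |a(n)|^2$. Adding the two estimates completes the proof.

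The only conceptual step — and it is a modest one — is the observation that the cutoff $X^{1/4}$ forces $N(n) = O(1)$. Without such a cutoff, the typical count of prime divisor pairs is $\sim \omega(n)^2$, producing a $(\log\log X)^2$ loss that one would otherwise have to cancel by more refined orthogonality arguments of the sort underlying Lemma \ref{lem:DualTK1}.
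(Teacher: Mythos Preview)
Your proof is correct and takes a genuinely different route from the paper. The paper proceeds via the duality principle: it views the left-hand side as an $\ell^2 \to \ell^2$ operator norm and proves the dual inequality for sequences $\{b(p,q)\}$, expanding the square and case-splitting on $\gcd(p_1q_1, p_2q_2)$ to control the resulting inner sum over $n$. Your argument is more direct: the split $|A-B|^2 \leq 2|A|^2 + 2|B|^2$, together with the observation that any $n \leq X$ has at most three prime factors exceeding $X^{1/4}$, handles the diagonal piece at once, and Mertens bounds the mean piece. Both proofs exploit the cutoff $p,q > X^{1/4}$ in an essential way --- the paper uses it to force $p_1q_1p_2q_2 > X$ in the coprime case, while you use it to bound $N(n)$ pointwise. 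Your approach is shorter and loses nothing here; the paper's duality argument is closer to the standard Tur\'{a}n--Kubilius template and would be the natural starting point if one sought a version without the large-prime cutoff, where your pointwise bound on $N(n)$ is unavailable.
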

\begin{proof}
By including the factor $pq$ into the square, we observe that this establishes an $\ell^2 \ra \ell^2$ operator norm for the matrix with entries 
$$
M = \left((pq)^{1/2}1_{pq|n} - (pq)^{-1/2}\right)_{\substack{X^{1/4} < p,q \leq X, p\neq q \\ n \leq X}}.
$$ 
Thus, by the duality principle \cite[Sec. 7.1]{IK} it suffices to show that for any sequence $\{b(p,q)\}_{p,q \text{ prime}} \subset \mb{C}$ we have
$$
\sum_{n \leq X} \left|\sum_{\substack{X^{1/4} < p,q \leq X \\ p \neq q}} \frac{b(p,q)}{pq} (pq1_{pq|n} - 1)\right|^2 \ll X\sum_{\substack{X^{1/4} < p,q \leq X \\ p \neq q}} \frac{|b(p,q)|^2}{pq}.
$$
Expanding the square on the LHS and swapping orders of summation, we obtain
$$
\sum_{\ss{X^{1/4} < p_1,p_2,q_1,q_2 \leq X \\ p_j \neq q_j \\ j = 1,2}} \frac{b(p_1,q_1)\bar{b}(p_2,q_2)}{p_1q_1p_2q_2} \sum_{n \leq X}\left(p_1q_1p_2q_2 1_{p_1q_1|n}1_{p_2q_2|n} - p_1q_11_{p_1q_1|n} - p_2q_21_{p_2q_2|n} + 1\right).
$$
Fix the quadruple $(p_1,q_1,p_2,q_2)$ for the moment, and consider the inner sum over $n \leq X$. If $(p_1q_1,p_2q_2) = 1$ then as $p_1q_1p_2q_2 > X$ the sum is
$$
\ll p_1q_1\llf \frac{X}{p_1q_1}\rrf + p_2q_2 \llf \frac{X}{p_2q_2} \rrf + X \ll X.
$$
If $(p_1q_1, p_2q_2) = p_1$ (so that $q_1 \neq q_2)$, say, then the sum is
$$
\ll p_1^2q_1q_2 \llf \frac{X}{p_1q_1q_2} \rrf + p_1q_1\llf \frac{X}{p_1q_1}\rrf + p_2q_2 \llf \frac{X}{p_2q_2} \rrf + X \ll p_1 X,
$$
provided $p_1q_1q_2 \leq X$. By symmetry, the analogous result holds if $(p_1q_1,p_2q_2) = q_1$. Finally, if $p_1q_1 = p_2q_2$ then similarly the bound is $\ll p_1q_1 X$. We thus obtain from these cases that the expression is bounded above by
\begin{align*}
&\ll X \sum_{\ss{X^{1/4} < p_1,q_1,p_2,q_2 \leq X \\ p_1 \neq q_1, p_2 \neq q_2 \\ (p_1q_1,p_2q_2) = 1}} \frac{|b(p_1,q_1)||b(p_2,q_2)|}{p_1p_2q_1q_2} + \sum_{\ss{X^{1/4} < p,q,r \leq X \\ p \neq q, p \neq r, q \neq r}} \frac{|b(p,q)||b(p,r)| + |b(p,q)||b(r,q)|}{pqr} \\
&+ X\sum_{\ss{X^{1/4} < p,q \leq X \\ p \neq q}} \frac{|b(p,q)|(|b(p,q)| + |b(q,p)|)}{pq}.
\end{align*}
% The sum over $n \leq X$ is $\ll X$, unless $p_1q_1 = p_2q_2$, in which case it is $p_1q_1 X$. Note that this can occur if either $p_1 = p_2$ or $p_1 = q_2$. The quantity above is thus
% \begin{align*}
% &\ll X\prod_{j = 1,2} \sum_{\ss{X^{1/4} < p_j,q_j \leq X \\ p_j \neq q_j}} \frac{|b(p_j,q_j)|}{p_jq_j} + \sum_{\ss{X^{1/4} < p, q \leq X \\ p \neq q}} \frac{|b(p,q)|(|b(p,q)| + |b(q,p)|)}{(pq)^2} \cdot pq X \\
% &\ll X \left(\sum_{\ss{X^{1/4} < p,q \leq X \\ p \neq q}} \frac{|b(p,q)|^2}{pq}\right) \left(\sum_{\ss{X^{1/4} < p \leq X}} \frac{1}{p}\right)^2 + X\sum_{\ss{X^{1/4} < p, q \leq X \\ p \neq q}} \frac{|b(p,q)|^2 + |b(p,q)||b(q,p)|}{pq}.
% \end{align*}
By the AM-GM inequality we simply have $2|b(p,q)||b(p',q')| \leq |b(p,q)|^2 + |b(p',q')|^2$ for any pairs of primes $p,q$ and $p',q'$, so invoking Mertens' theorem and symmetry the above expressions are
\begin{align*}
\ll X \sum_{\ss{X^{1/4} < p,q \leq X \\ p \neq q}} \frac{|b(p,q)|^2}{pq},
\end{align*}
and the claim follows.
\end{proof}

\begin{proof}[Proof of Proposition \ref{prop:locLg}]
%( i) $\Rightarrow$ ii) )
%Let $g'$ be the strongly additive function such that $g'(p) = g(p)$ for all primes $p$. Setting $h := g-g'$, we deduce from Lemma \ref{lem:TK} that
%$$
%\frac{1}{X}\sum_{n \in \mc{S}(X)} (g(n)-A_g(X))^2 \ll \frac{1}{X} \sum_{n \in \mc{S}(X)} (g'(n)-A_{g'}(X))^2 + B_h(X)^2,
%$$
%and since $h(p^k) = (k-1)g(p)$ for all $p$ and $k \geq 1$,
%$$
%B_h(X)^2 = \sum_{\substack{p^k \leq X \\ k \geq 2}} \frac{(k-1)^2 g(p)^2}{p^k} \ll \sum_{\ss{p^k \leq X}} \frac{g(p^k)^2}{p^{k+1}} \ll B_g(1/\e)^2 + \e B_g(X)^2 \ll \e B_g(X)^2,
%$$
%provided $X$ is large enough in terms of $\e$.
%% Selecting $\e$ small enough, and $X$ large enough relative to $\e$, we may deduce that
%% $$
%% \frac{1}{X}\sum_{n \in \mc{B}(X)} (g'(n)-A_{g'}(X))^2 \gg B_g(X)^2 \gg B_{g'}(X)^2,
%% $$
%Thus, in the argument to follow we will treat the strongly additive function $g'$. For convenience, we will continue to label the function as $g$. \\
Let $g \in \mc{A}$, and let $g^{\ast}$ be the strongly additive function equal to $g$ at primes. Following the proof of Lemma \ref{lem:redtoSA}, and using Lemma \ref{lem:TK}, we have
$$
\frac{1}{X}\sum_{n \in \mc{S}(X)} |(g-g^{\ast}) - A_{g-g^{\ast}}(X)|^2 \ll B_{g-g^{\ast}}(X)^2 \ll \sum_{\ss{p^k \leq X \\ k \geq 2}} \frac{|g(p)|^2 + |g(p^k)|^2}{p^k} \ll \e B_g(X)^2,
$$
by assumption. 
It follows that
$$
\frac{1}{X}\sum_{n \in \mc{S}(X)} |g(n)-A_g(X)|^2 \ll \e B_g(X)^2 + \frac{1}{X}\sum_{n \in \mc{S}(X)} |g^{\ast}(n)-A_{g^{\ast}}(X)|^2,
$$
so replacing $g$ by $g^{\ast}$, we may assume in what follows that $g$ is strongly additive. \\
Fix $z = X^{1/4}$ and split $g = g_{\leq z} + g_{>z}$, where $g_{\leq z}$ is the strongly additive function supported on primes $p \leq z$. By Cauchy-Schwarz, we seek to estimate
\begin{equation}\label{eq:splitAtZ}
\frac{1}{X} \sum_{n \in \mc{S}(X)} |g_{\leq z}(n) - A_{g_{\leq z}}(X)|^2 + \frac{1}{X} \sum_{n \in \mc{S}(X)} |g_{> z} (n) - A_{g_{ > z}}(X)|^2.
\end{equation}
We begin with the first expression.
%$$
%\ll \sum_{p \leq z} \frac{g(p)^2}{p^2}\frac{1}{X}\sum_{n \in \mc{S}(X)} |p1_{p|n} - 1|^2 + \e B_g(X)^2.
%$$
Writing
$$
g_{\leq z}(n)-A_{g_{\leq z}}(X) = \sum_{p \leq z} g(p)\sum_{1 \leq k \leq \log X/\log p} \left(1_{p^k||n} - p^{-k}(1-1/p)\right) = \sum_{p \leq z} g(p)(1_{p|n}-1/p) + O(X^{-1} \sum_{p \leq z} |g(p)|)
$$
for each $n \leq X$ and expanding the square, this first term in \eqref{eq:splitAtZ} is
\begin{align*}
&\ll \sum_{p \leq z} \frac{|g(p)|^2}{p^{2}} \frac{1}{X} \sum_{n \in \mc{S}(X)} (p1_{p|n} - 1)^2 + \frac{1}{X}\sum_{n \in \mc{S}(X)}\sum_{\substack{p,q \leq z \\ p \neq q}} \frac{|g(p)g(q)|}{pq} (p1_{p|n} - 1)(q1_{q|n} - 1) + O(B_g(X)^2X^{-2}(z\pi(z))) \\
&=: D + O + O(B_g(X)^2X^{-3/2}).
\end{align*}
Consider the off-diagonal term $O$. 
%we split this term as $O = O_1+O_2+O_3$, where
%\begin{align*}
%O_1 &:= \frac{1}{X}\sum_{\substack{pq \leq \sqrt{X} \\ p \neq q}} \frac{g(p)g(q)}{pq} \sum_{n \in \mc{B}(X)}(p1_{p|n} - 1)(q1_{q|n} - 1) \\
%O_2 &:= \frac{1}{X}\sum_{\substack{\sqrt{X} < pq \leq X \\ p \neq q}} \frac{g(p)g(q)}{pq} \sum_{n \in \mc{B}(X)}(p1_{p|n} - 1)(q1_{q|n} - 1) \\
%O_3 &:= \frac{1}{X}\sum_{\substack{p,q \leq X \\ pq > X \\ p \neq q}} \frac{g(p)g(q)}{pq} \sum_{n \in \mc{B}(X)}(p1_{p|n} - 1)(q1_{q|n} - 1).
%\end{align*}
Observe that for any two distinct primes $p$ and $q$, the Chinese remainder theorem implies that
$$
(p1_{p|n}-1)(q1_{q|n}-1) = \left(\sum_{\substack{a \pmod{p} \\ a \neq 0}} e(an/p)\right)\left(\sum_{\substack{b \pmod{q} \\ b \neq 0}} e(bn/q)\right) = \asum_{c \pmod{pq}} e(cn/pq),
$$
where the asterisked sum is over reduced residues modulo $pq$. Note that for any two distinct products $p_1q_1$ and $p_2q_2$ the gap between fractions with these denominators satisfies 
$$
\left|\frac{c_1}{p_1q_1} - \frac{c_2}{p_2q_2}\right| \geq \frac{1}{p_1q_1p_2q_2} \geq \frac{1}{z^4} = \frac{1}{X},
$$ 
and the number of pairs yielding the same product $pq$ is $\leq 2$. Using this expression in $O$, applying the Cauchy-Schwarz inequality twice followed by the large sieve inequality \cite[Lem. 7.11]{IK}, we obtain
\begin{align*}
O &\leq \frac{1}{X}\sum_{\ss{p,q \leq z \\ p \neq q}} \frac{|g(p)g(q)|}{pq} \asum_{c \pmod{pq}} \sum_{n \leq X} 1_{\mc{S}}(n) e(cn/pq) \\
&\leq \frac{1}{X}\left(\sum_{p \leq z} \frac{|g(p)|^2}{p}\right)\left(\sum_{\substack{pq \leq \sqrt{X} \\ p \neq q}} \frac{1}{pq} \left|\asum_{c \pmod{pq}} \sum_{n \leq X} 1_{\mc{S}}(n) e(cn/pq)\right|^2\right)^{1/2} \\
&\ll \frac{1}{X}B_g(X)^2 \left(\sum_{\substack{pq \leq \sqrt{X} \atop p\neq q}}\asum_{c \pmod{pq}}  \left|\sum_{n \leq X} 1_{\mc{S}}(n)e(cn/pq)\right|^2\right)^{1/2} \\
&\ll \frac{1}{X}B_g(X)^2 \left(X|\mc{S}(X)|\right)^{1/2} = \left(\frac{|\mc{S}(X)|}{X}\right)^{1/2}B_g(X)^2.
\end{align*}
Thus, from \eqref{eq:splitAtZ} we obtain the upper bound,
\begin{align*}
&\ll \frac{1}{X}\sum_{p \leq z} \frac{|g(p)|^2}{p^2}\sum_{n \in \mc{S}(X)} |p1_{p|n}-1|^2 + \frac{1}{X} \sum_{n \in \mc{S}(X)} \left|\sum_{X^{1/4} < p \leq X} \frac{|g(p)|}{p}(p1_{p|n}-1)\right|^2 + \left(\frac{|\mc{S}(X)|}{X}\right)^{1/2} B_g(X)^2\\
&\ll \sum_{p \leq X} \frac{|g(p)|^2}{p^2}\frac{1}{X}\sum_{n \in \mc{S}(X)}|p1_{p|n}-1|^2 + \sum_{\ss{X^{1/4} < p,q \leq X \\ p \neq q}} \frac{|g(p)g(q)|}{pq} \frac{1}{X}\sum_{n \in \mc{S}(X)}(p1_{p|n}-1)(q1_{q|n}-1) \\
&+ \left(\frac{|\mc{S}(X)|}{X}\right)^{1/2} B_g(X)^2.
\end{align*}
Call the second term above $T$, so that
\begin{align*}
T &= \sum_{\ss{X^{1/4} < p,q \leq X \\ p \neq q}} \frac{|g(p)g(q)|}{pq} \left(\frac{pq}{X}|\mc{S}_{pq}(X)| - \frac{p}{X} |\mc{S}_p(X)| - \frac{q}{X} |\mc{S}_q(X)| + \frac{|\mc{S}(X)|}{X}\right) \\
&=: \sum_{\ss{X^{1/4} < p,q \leq X \\ p \neq q}} \frac{|g(p)g(q)|}{pq} T_{p,q}(X). 
\end{align*}
 We split the pairs of primes $X^{1/4} < p,q \leq X$, $p \neq q$ in its support as follows. Given a squarefree integer $d$, call $E_d(\e)$ the condition $\frac{d}{X}|\mc{S}_d(X)| \leq \e$, and let $L_d(\e)$ be the converse condition $\frac{d}{X}|\mc{S}_d(X)| > \e$. If, simultaneously, the three conditions $E_{pq}(\e),E_p(\e)$ and $E_q(\e)$ all hold, then as $|\mc{S}(X)|/X < \e$ we have $T_{p,q}(X) \ll \e$; otherwise, we trivially have $T_{p,q}(X) \ll 1$. We thus find by the Cauchy-Schwarz inequality that
\begin{align}
T &\ll \e \sum_{\ss{X^{1/4} < p,q \leq X \\ p \neq q}} \frac{|g(p)g(q)|}{pq} + \sum_{\ss{X^{1/4} < p,q \leq X \\ p \neq q \\ L_p(\e), L_q(\e) \text{ or } L_{pq}(\e)}} \frac{|g(p)g(q)|}{pq} \nonumber\\
&\ll B_g(X)^2 \left( \e \sum_{X^{1/4} < p \leq X} \frac{1}{p} + \left(\sum_{\ss{X^{1/4} < p,q \leq X \\ p \neq q \\ L_p(\e), L_q(\e) \text{ or } L_{pq}(\e)}} \frac{1}{pq}\right)^{1/2}\right). \label{eq:lastOD}
\end{align}
Now suppose $L_d(\e)$ holds for some $d\geq 2$. As $\e > 2 \frac{|\mc{S}(X)|}{X}$ we have 
$$
\frac{1}{d} = \frac{4d}{(\e X)^2} \left(\frac{\e X}{2d}\right)^2 < \frac{4d}{(\e X)^2} \left(|\mc{S}_d(X)| - \frac{|\mc{S}(X)|}{d}\right)^2.
$$
As such, an application of Lemma \ref{lem:DualTK1} yields
$$
\sum_{\ss{X^{1/4} < p \leq X \\ L_p(\e)}} \frac{1}{p} \leq \sum_{\ss{p \leq X \\ |\mc{S}_p(X)| > \e X/p}} \frac{1}{p} \ll \frac{1}{(\e X)^{2}} \sum_{p \leq X} p \left|\sum_{\ss{n \leq X \\ p | n}} 1_{\mc{S}}(n) - \frac{1}{p} \sum_{n \leq X} 1_{\mc{S}}(n)\right|^2 \ll \e^{-2} \frac{|\mc{S}(X)|}{X};
$$
this, by the way, establishes \eqref{eq:sparseBp}. Similarly, by Lemma \ref{lem:DualTK2} we get
$$
\sum_{\ss{X^{1/4} < p,q \leq X \\ p \neq q \\ L_{pq}(\e)}} \frac{1}{pq} \ll \frac{1}{(\e X)^{2}} \sum_{\ss{X^{1/4} < p,q \leq X \\ p\neq q}} pq \left|\sum_{\ss{ n \leq X \\ pq|n}} 1_{\mc{S}}(n) - \frac{1}{pq} \sum_{n \leq X} 1_{\mc{S}}(n)\right|^2 \ll \e^{-2} \frac{|\mc{S}(X)|}{X}.
$$
Combining these estimates in \eqref{eq:lastOD} shows that 
$$
T \ll B_g(X)^2 \left(\e + \e^{-1} \left(\frac{|\mc{S}(X)|}{X}\right)^{1/2}\right).
% \ll \e B_g(X)^2,
$$
%given that $\e^4 > |\mc{S}(X)|/X$.  
Thus, putting $\delta(X) := \e + \e^{-1} \left(\frac{|\mc{S}(X)|}{X}\right)^{1/2}$, we finally conclude that 
%if $\e $ is small enough,
\begin{align*}
\frac{1}{X} \sum_{n \in \mc{S}(X)} |g(n)-A_g(X)|^2 &\ll \sum_{p \leq X} \frac{|g(p)|^2}{p^2} \frac{1}{X}\sum_{n \in \mc{S}(X)}|p1_{p|n}-1|^2 +  \delta(X)B_g(X)^2 \\
&= \sum_{p \leq X} \frac{|g(p)|^2}{p} \left(\frac{(p-2)|\mc{S}_p(X)|}{X} + \frac{|\mc{S}(X)|}{pX}\right) + \delta(X) B_g(X)^2\\
&\ll \sum_{\ss{p \leq X \\ |\mc{S}_p(X)| > \e X/p}} \frac{|g(p)|^2}{p} + B_g(X)^2\left(\e + \frac{|\mc{S}(X)|}{X}\right) + \delta(X) B_g(X)^2\\
&\ll \sum_{\ss{p \leq X \\ |\mc{S}_p(X)| > \e X/p}} \frac{|g(p)|^2}{p} + \delta(X) B_g(X)^2,
\end{align*}
and the claim follows. 
% \\
% ( ii) $\Rightarrow$ i)) The above argument shows that i) is equivalent to
% \begin{align*}
% B_g(X)^2 &\ll \frac{1}{X}\sum_{n \in \mc{B}(X)} |g(n)-A_g(X)|^2 = \sum_{p \leq X} \frac{g(p)^2}{p^2}\frac{1}{X}\sum_{n \in \mc{B}(X)}|p1_{p|n} - 1|^2 + O(\e B_g(X)^2) \\
% &= \sum_{p \leq X} \frac{g(p)^2}{p} \frac{(p-2)|\mc{B}_p(X)|}{X} + O(\e B_g(X)^2) = \sum_{\ss{p \leq X \\ |\mc{B}_p(X)| > \e X/p}} \frac{g(p)^2}{p} + O(\e B_g(X)^2),
% \end{align*}
% so i) follows immediately from ii).
\end{proof}

\begin{cor} \label{cor:sparseLind}
Let $g: \mb{N} \ra \mb{R}$ be an additive function that satisfies \eqref{eq:LindCond}. Let $\mc{S} \subset \mb{N}$ be a set with $|\mc{S}(X)| = o(X)$. Then for any $j \in \mb{Z}$,
$$
\frac{1}{X} \sum_{n+j \in \mc{S}(X)} |g(n)-A_g(X)|^2 = o(B_g(X)^2).
$$
\end{cor}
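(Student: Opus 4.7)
The plan is to deduce the corollary from Proposition~\ref{prop:locLg} applied to the shifted set $\mc{T} := \{m \in \mb{N} : m+j \in \mc{S}\}$. First I would observe that $|\mc{T}(X)| \leq |\mc{S}(X+|j|)| \leq |\mc{S}(X)| + |j| = o(X)$, so $\mc{T}$ inherits the sparseness hypothesis. Since every $n$ contributing to the sum in the corollary (with the natural convention $n \leq X$) lies in $\mc{T}(X)$, we have
$$
\sum_{\substack{n \leq X \\ n+j \in \mc{S}(X)}} |g(n)-A_g(X)|^2 \;\leq\; \sum_{n \in \mc{T}(X)} |g(n)-A_g(X)|^2,
$$
so it suffices to prove the bound with $\mc{T}$ in place of $\mc{S}$.

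Since $g \in \mc{A}_s \subseteq \mc{A}$ (implicit in the context, as used in the proof of Theorem~\ref{thm:iterStep}), for any $\e > 0$ and $X$ large both hypotheses of Proposition~\ref{prop:locLg} are satisfied for $\mc{T}$, yielding
$$
\frac{1}{X}\sum_{n \in \mc{T}(X)} |g(n)-A_g(X)|^2 \;\ll\; B_g(X)^2\left(\e + \e^{-1}\left(\frac{|\mc{T}(X)|}{X}\right)^{1/2}\right) + \sum_{\substack{p \leq X \\ |\mc{T}_p(X)| > \e X/p}} \frac{|g(p)|^2}{p}.
$$
The principal step is to control the prime sum using \eqref{eq:LindCond}. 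I would split at the threshold $\delta^{-1} B_g(X)$, for an auxiliary $\delta > 0$. For primes with $|g(p)| \leq \delta^{-1} B_g(X)$, combining the trivial pointwise bound with \eqref{eq:sparseBp} of Proposition~\ref{prop:locLg} gives
$$
\sum_{\substack{p \leq X,\; |g(p)| \leq \delta^{-1} B_g(X) \\ |\mc{T}_p(X)| > \e X/p}} \frac{|g(p)|^2}{p} \;\leq\; \delta^{-2} B_g(X)^2 \sum_{\substack{p \leq X \\ |\mc{T}_p(X)| > \e X/p}} \frac{1}{p} \;\ll\; \delta^{-2}\e^{-2}\frac{|\mc{T}(X)|}{X}\, B_g(X)^2.
$$
For primes with $|g(p)| > \delta^{-1} B_g(X)$, the tail sum is bounded directly by the definition of $F_g(\delta)$, contributing $\ll F_g(\delta) B_g(X)^2$ for $X$ large.

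To conclude, given an arbitrary $\eta > 0$, I would first use \eqref{eq:LindCond} to select $\delta > 0$ small enough that $F_g(\delta) < \eta$, then set $\e := \eta$, and finally take $X$ so large that $|\mc{T}(X)|/X < \delta^2 \eta^4$. Combining the four terms gives
$$
\frac{1}{X}\sum_{n \in \mc{T}(X)} |g(n)-A_g(X)|^2 \;\ll\; \eta\, B_g(X)^2,
$$
and letting $\eta \to 0^+$ yields the desired $o(B_g(X)^2)$ estimate. The main obstacle overcome by this argument is the potential contribution of atypically large values $|g(p)|$ along sparse arithmetic progressions: these cannot be controlled by the sparseness of $\mc{T}$ alone, and the Lindeberg-type hypothesis \eqref{eq:LindCond} is precisely what permits separating them from the well-behaved bulk.
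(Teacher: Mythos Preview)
Your proposal is correct and follows essentially the same route as the paper: apply Proposition~\ref{prop:locLg} to the shifted set $\mc{T}=\mc{S}-j$, then split the residual prime sum at the threshold $\delta^{-1}B_g(X)$, handling the small-value primes via \eqref{eq:sparseBp} and the large-value primes via the Lindeberg-type hypothesis \eqref{eq:LindCond}. Your explicit remark that $g\in\mc{A}$ is needed (and supplied by the ambient hypothesis $g\in\mc{A}_s$ in Theorem~\ref{thm:iterStep}) to verify the prime-power condition in Proposition~\ref{prop:locLg} is a useful clarification that the paper leaves implicit.
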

\begin{proof}
Fix $j \in \mb{Z}$. Since $|(\mc{S}-j)(X)| = |\mc{S}(X)| = o(X)$, by Proposition \ref{prop:locLg} it suffices to show that
$$
\sum_{\substack{p \leq X \\ |(\mc{S}-j)_p(X)| > \e X/p}} \frac{g(p)^2}{p} \ll \e B_g(X)^2,
$$
for any $\e > 0$ sufficiently small, as $X \ra \infty$. \\
We may split the sum according to whether or not $|g(p)| > \delta^{-1}B_g(X)$, where $\delta > 0$ is to be chosen. In light of \eqref{eq:sparseBp}, we obtain
$$
\sum_{\substack{p \leq X \\ |(\mc{S}-j)_p(X)| > \e X/p \\ |g(p)| \leq \delta^{-1} B_g(X)}} \frac{g(p)^2}{p} \leq \delta^{-2} B_g(X)^2 \sum_{\substack{p \leq X \\ |(\mc{S}-j)_p(X)| > \e X/p}} \frac{1}{p} \ll (\e\delta)^{-2} \frac{|\mc{S}(X)|}{X} B_g(X)^2,
$$
so that this is $\ll \e B_g(X)^2$ if $X \geq X_0(\e)$. \\
On the other hand, by our assumption \eqref{eq:LindCond}, 
$$
\sum_{\substack{p \leq X \\ |(\mc{S}-j)_p(X)| > \e X/p \\ |g(p)| > \delta^{-1} B_g(X)}} \frac{g(p)^2}{p} \leq \sum_{\substack{p \leq X \\ |g(p)| > \delta^{-1} B_g(X)}} \frac{g(p)^2}{p} \leq 2F_g(\delta)B_g(X)^2,
$$
provided $X \geq X_0(\delta)$. For $\delta = \delta(\epsilon)$ sufficiently small we can make this $\ll \e B_g(X)^2$ whenever $X\geq X_0(\e)$ (with $X_0(\e)$ taken larger if necessary). The claim now follows.
\end{proof}

We are now able to prove the first part of Theorem \ref{thm:iterStep}, namely that there is a parameter $\lambda = \lambda(X)$ such that
$$
\sum_{p^k \leq X} \frac{|g(p^k)-\lambda(X)\log p^k|^2}{p^k} = o(B_g(X)^2).
$$
The proof of the slow variation condition $\lambda(X^u) = \lambda(X) + o(B_g(X)/\log X)$, for $0 < u \leq 1$ fixed, is postponed to the next section.
%\subsection{Proof of Theorem \ref{thm:iterStep}}
%To complete the proof of Theorem \ref{thm:iterStep}, we now simply need the following order of magnitude estimate of Ruzsa.
\begin{proof}[Proof of Theorem \ref{thm:iterStep}: Part I]
In light of Lemma \ref{lem:Ruzsa}, it begin by showing that
\begin{equation}\label{eq:passtoRuzsa}
\frac{1}{X} \sum_{n \leq X} |g(n)-A_g(X)|^2 = o(B_g(X)^2).
\end{equation}
As in Lemma \ref{lem:Ag}, when $X/\log X < Y \leq X$ we have 
$$
|A_g(X)-A_g(Y)| \leq B_g(X) \left(\sum_{X/\log X < p \leq X} \frac{1}{p}\right)^{1/2} \ll B_g(X) \sqrt{\frac{\log\log X}{\log X}},
$$
so that upon dyadically decomposing the sum on the LHS, we get
\begin{align*}
&\frac{1}{X}\sum_{n \leq X/\log X} |g(n)-A_g(X)|^2 + \sum_{1 \leq 2^j \leq \log X} 2^{-j} \cdot \frac{2^j}{X}\sum_{X/2^j < n \leq 2X/2^j} |g(n)-A_g(X)|^2 \\
&= \sum_{\frac{X}{\log X} \leq 2^j \leq X} \frac{2^j}{X} \cdot 2^{-j} \sum_{2^{j-1} < n \leq 2^j} |g(n)-A_g(2^j)|^2 + O\left(\frac{B_g(X)^2 \log\log X}{\log X}\right).
\end{align*}
%using the fact that
%$$
%A_g(Z)-A_g(cZ) \leq B_g(Z) \left(\sum_{cZ < p \leq Z} \frac{1}{p}\right)^{1/2} \ll B_g(Z) \frac{\log 1/c}{\log Z},
%$$
%for any $1/Z \leq c < 1$. 
It thus suffices to show that, uniformly over $X/\log X < 2^j \leq X$,
$$
2^{-j} \sum_{2^{j-1} < n \leq 2^j} |g(n)-A_g(2^j)|^2 = o(B_g(X)^2).
$$
Fix $X/\log X < 2^k \leq X$, set $Y_k := 2^k$ and introduce a parameter $1 \leq R \leq (\log X)^{1/2}$, which is very slowly-growing as a function of $X$. Let 
$$
\mc{B}_R(Y_k) := \bigcup_{|i| \leq R} (\mc{B} \cap (Y_k/2,Y_k])+i), \quad\quad \mc{G}_R(Y_k) := [Y_k/2,Y_k] \bk \mc{B}_R(Y_k).
$$
We observe that if $n \in \mc{G}_R(Y_k)$ then we have
$$
g(n-R) \leq g(n-R+1) \leq \cdots \leq g(n) \leq g(n+1) \leq \cdots \leq g(n+R).
$$
We divide $\mc{G}_R(Y_k)$ further into the sets
$$
\mc{G}_R^+(Y_k) := \{n \in \mc{G}_R(Y_k) : g(n) \geq A_g(Y_k)\}, \quad \quad \mc{G}_R^-(Y_k) := \{n \in \mc{G}_R(Y_k) : g(n) < A_g(Y_k)\}.
$$
Suppose $n \in \mc{G}_R^+(Y_k)$. Since 
$$
0 \leq g(n)-A_g(Y_k) \leq \frac{1}{R} \sum_{0 \leq j \leq R-1} g(n+j) - A_g(Y_k),
$$
we deduce from the monotonicity of the map $y \mapsto y^2$ for $y \geq 0$ that (shifting $n \mapsto n+R =: n'$)
$$
\frac{2}{Y_k}\sum_{\ss{n \in \mc{G}_R^+(Y_k) \\ n+R \leq Y_k}} |g(n)-A_g(Y_k)|^2 \leq \frac{2}{Y_k}\sum_{\ss{n'-R \in \mc{G}_R^+(Y_k) \\ n' \leq Y_k}} \left|\frac{1}{R} \sum_{n'-R<m \leq n'} g(m) - \frac{2}{Y_k}\sum_{Y_k/2 < m \leq Y_k} g(m)\right|^2 + O\left(\frac{B_g(X)^2}{\log X}\right),
$$
where the error term comes from replacing $A_g(Y_k)$ by the sum over $[Y_k/2,Y_k]$. 
Similarly, if $n \in \mc{G}_R^-(Y_k)$ then
$$
0 \leq A_g(Y_k) - g(n) \leq A_g(Y_k) - \frac{1}{R} \sum_{0 \leq j \leq R-1} g(n-j),
$$
and so by the same argument we obtain
$$
\frac{2}{Y_k} \sum_{\ss{n \in \mc{G}_R^-(Y_k) \\ n-R \geq Y_k/2}} |g(n)-A_g(Y_k)|^2 \leq \frac{2}{Y_k} \sum_{\ss{n \in \mc{G}_R^-(Y_k) \\ n-R \geq Y_k/2}} \left|\frac{1}{R}\sum_{n-R < m \leq n} g(m) - \frac{2}{Y_k}\sum_{Y_k/2 < m \leq Y_k} g(m)\right|^2 + O\left(\frac{B_g(X)^2}{\log X}\right).
$$
The above sums cover all elements of $\mc{G}_R(Y_k)$ besides those in $[Y_k/2,Y_k/2 + R) \cup (Y_k-R,Y_k]$. To deal with these, we define $\mc{S} := \bigcup_{j \geq 0} [2^j-R,2^j+R] \cap \mb{N}$. We see that $|\mc{S}(Z)| \ll R \log Z = o(Z)$, and $\mc{S}$ contains $[Y_k/2,Y_k/2 + R] \cup [Y_k-R,Y_k]$ for each $k$. By Corollary \ref{cor:sparseLind} (taking $j = 0$ there), we thus obtain
$$
\frac{1}{Y_k}\sum_{\ss{ n \in \mc{G}_R(Y_k) \cap \mc{S}(Y_k)}} |g(n)-A_g(Y_k)|^2 = o(B_g(X)^2)
$$
uniformly over all $X/\log X < Y_k \leq X$, provided $X$ is large enough in terms of $R$.
Combining the foregoing estimates and using positivity, we find that
\begin{align*}
\frac{2}{Y_k}\sum_{n \in \mc{G}_R(Y_k)} |g(n)-A_g(Y_k)|^2 &\ll \frac{1}{Y_k}\sum_{Y_k/2 < n \leq Y_k} \left|\frac{1}{R} \sum_{n-R < m \leq n} g(m) - \frac{2}{Y}\sum_{Y_k/2 < m \leq Y_k} g(m)\right|^2 + o(B_g(X)^2).
\end{align*}
By Theorem \ref{thm:MRL2}, this gives $o_{R \ra \infty}(B_g(X)^2)$. \\
It remains to estimate the contribution from $n \in \mc{B}_R(Y_k)$. By the union bound, we have
$$
\frac{2}{Y_k} \sum_{n \in \mc{B}_R(Y_k)} |g(n)-A_g(Y_k)|^2 \leq R\max_{|i| \leq R} \frac{2}{Y_k}\sum_{\substack{Y_k/2 < n \leq Y_k \\ n +i \in \mc{B}}} |g(n)-A_g(Y_k)|^2.
$$
By Corollary \ref{cor:sparseLind}, the above expression is $o(B_g(X))$, again provided $X$ is sufficiently large in terms of $R$. \\
To conclude, for any $\e > 0$ we can find $R$ large enough in terms of $\e$ and $X_0$ sufficiently large in terms of $\e$ and $R$ such that if $X \geq X_0$ then
$$
\frac{2}{Y_k} \sum_{Y_k/2 < n \leq Y_k} |g(n)-A_g(Y_k)|^2 \ll \e B_g(X)^2
$$
uniformly in $X/\log X < Y_k = 2^k \leq X$, and \eqref{eq:passtoRuzsa} follows.\\
Now, applying Lemma \ref{lem:Ruzsa}, we deduce that
$$
B_{g_{\lambda_0}}(X)^2 + \lambda_0(X)^2 \ll \frac{1}{X}\sum_{n \leq X} |g(n)-A_g(X)|^2 = o(B_g(X)^2),
$$
where $g_{\lambda_0}(n) := g(n)-\lambda_0 \log n$, and 
$$
\lambda_0(X) := \frac{2}{(\log X)^2} \sum_{p \leq X} \frac{g(p)\log p}{p}.
$$
We verify that $\lambda_0(X)$ is slowly varying in the next section (immediately following the proof of Proposition \ref{prop:AgLog}).
\end{proof}

\section{Rigidity Properties for Almost Everywhere Monotone Functions}
We continue to assume that $g$ is almost everywhere monotone. Theorem \ref{thm:iterStep} shows how an additive function $g \in \mc{A}$ compares to a logarithm, conditionally assuming $g(p)$ is not frequently much larger than $B_g(X)$ for $p \leq X$. In this section, we endeavour to explore some further conditional and unconditional consequences of this almost everywhere monotonicity property.  
\subsection{The structure of $A_g(X)$}
The first main result of this section shows that the asymptotic mean $A_g(X)$ of $g$ behaves similarly to a constant times $\log$, in the sense that for $\delta \in (0,1)$ fixed and $X^{\delta} < s,t \leq X$, the quotient $(A_g(t)-A_g(s))/\log (t/s)$ does not vary much.
\begin{prop} \label{prop:AgLog}
Let $g:\mb{N} \ra \mb{C}$ be an additive function, and assume that $\mc{B} := \{n \in \mb{N} : g(n) < g(n-1)\}$ has natural density 0. Then there is $\lambda = \lambda(X) \in \mb{R}$ such that for any $\frac{\log\log X}{\sqrt{\log X}} < \delta \leq 1/4$,
$$
\sum_{X^{\delta} < p^k \leq X} \frac{1}{p^k}|A_g(X)-A_g(X/p^k) - \lambda \log p^k| = o(B_g(X)(\log(1/\delta))^{1/2}).
$$
Furthermore, $A_g(X)$ and $\lambda(X)$ satisfy the following properties: 
\begin{enumerate}[(i)]
\item $\lambda(X) \ll B_g(X)/\log X$, \\
\item for $X$ sufficiently large and any $X^{\delta} < t_1 \leq t_2 \leq X$,
$$
A_g(t_2) = A_g(t_1) + \lambda(X)\log(t_1/t_2) + o((\log(1/\delta)^{1/2}B_g(X)),
$$
\item for every $u \in (\delta,1]$ we have
$$
\lambda(X) = \lambda(X^u) + o\left((\log(1/\delta)^{1/2} \delta^{-1} \frac{B_g(X)}{\log X}\right).
$$
\end{enumerate}
\end{prop}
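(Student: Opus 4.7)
The plan is to take $\lambda = \lambda(X) := \tfrac{2}{(\log X)^2} \sum_{p \leq X} g(p)\log p/p$, the quantity appearing in Lemma \ref{lem:Ruzsa}. Property (i) is then immediate by Cauchy--Schwarz and Mertens, as
\[
|\lambda(X)| \le \tfrac{2B_g(X)}{(\log X)^2}\Bigl(\sum_{p\le X}(\log p)^2/p\Bigr)^{1/2} \ll B_g(X)/\log X.
\]
The main estimate and properties (ii)--(iii) will come from combining the near-monotonicity hypothesis with Theorem \ref{thm:MRL1}.

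The first step is to show $g(n) \approx A_g(X)$ on $\ell^1$ average. Fix a slowly growing $h \to \infty$ and let $\mathcal{B}_h(X) := \bigcup_{|i| \leq h}(\mathcal{B} + i)$, which has density $o(1)$ in $[1,X]$. For $n \notin \mathcal{B}_h(X)$, monotonicity of $g$ on $[n-h, n+h]$ sandwiches $g(n)$ between the left and right short averages $S_h^{\pm}(n)$, so $|g(n) - A_g(X)| \leq |S_h^+(n) - A_g(X)| + |S_h^-(n) - A_g(X)|$. The $\ell^1$ deviations of $S_h^{\pm}(n)$ from $A_g(X)$ are $o(B_g(X))$ by Theorem \ref{thm:MRL1}, and the contribution of $\mathcal{B}_h(X)$ is absorbed using Cauchy--Schwarz together with Lemma \ref{lem:TK}. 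This yields $X^{-1}\sum_{n \leq X}|g(n) - A_g(X)| = o(B_g(X))$, and the same reasoning applied dyadically gives the analogous bound at every scale $Y \in [X^\delta, X]$.

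Armed with this, for each prime power $p^k \in (X^\delta, X]$ I consider integers $n \in (X/(2p^k), X/p^k]$ with $(n,p) = 1$. For most such $n$ the previous step (applied at scales $X/p^k$ and $X$) gives $g(n) \approx A_g(X/p^k)$ and $g(np^k) \approx A_g(X)$ simultaneously; coprimality $(n,p)=1$ gives $g(np^k) = g(n) + g(p^k)$, so $g(p^k) \approx A_g(X) - A_g(X/p^k)$ with a quantitative error. Averaging over such $n$, summing over $p^k$ with weight $1/p^k$, and invoking Cauchy--Schwarz with $\sum_{X^\delta < p^k \le X} 1/p^k \ll \log(1/\delta)$ produces
\[
\sum_{X^\delta < p^k \le X} \frac{|g(p^k) - (A_g(X) - A_g(X/p^k))|}{p^k} = o\bigl(B_g(X)\sqrt{\log(1/\delta)}\bigr).
\]
To replace $g(p^k)$ by $\lambda\log p^k$ above, I upgrade the first step to the $\ell^2$ bound $X^{-1}\sum_n |g(n)-A_g(X)|^2 = o(B_g(X)^2)$; Ruzsa's identity (Lemma \ref{lem:Ruzsa}) then forces $B_{g_\lambda}(X)^2 = \sum_{p^k \leq X}(g(p^k)-\lambda\log p^k)^2/p^k = o(B_g(X)^2)$, and one more Cauchy--Schwarz gives the required $\ell^1$ bound on $|g(p^k) - \lambda\log p^k|/p^k$. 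The triangle inequality combines these to prove the main estimate. For (ii), one writes $A_g(t_2) - A_g(t_1) = \sum_{t_1 < q^j \le t_2}(g(q^j)/q^j)(1-1/q) + O(B_g(X)/\sqrt{\log X})$ and replaces $g(q^j)$ by $\lambda\log q^j$, using $\sum_{t_1 < q^j \le t_2}\log q^j/q^j \approx \log(t_2/t_1)$ from Mertens; property (iii) is a direct specialization of (ii) with $t_1 = X^u$ and $t_2 = X$.

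The main obstacle is the $\ell^1 \to \ell^2$ upgrade of the deviation bound for $g(n)$, needed to apply Ruzsa's lemma, without recourse to Theorem \ref{thm:MRL2} (which would require $g \in \mathcal{A}_s$). I expect this to require a two-scale truncation of $g$ at a threshold comparable to $B_g(X)$: the sandwich argument controls the bounded part of $g$ in $\ell^2$, while the contribution of very large prime values is absorbed via a Chebyshev-type estimate exploiting the $1/p^k$ weight. Delicately balancing the truncation level against the exceptional set's density is where the proof is most technical.
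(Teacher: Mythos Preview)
Your approach has a genuine gap at the step you yourself flag: the $\ell^1 \to \ell^2$ upgrade of the centred moment. Proposition~\ref{prop:AgLog} does \emph{not} assume $g \in \mc{A}_s$, and without that hypothesis the bound $\frac{1}{X}\sum_{n \leq X} |g(n)-A_g(X)|^2 = o(B_g(X)^2)$ can genuinely fail. Your truncation idea does not rescue this: splitting at a threshold $\asymp B_g(X)$ leaves a large-prime part whose second-moment contribution is exactly $F_g(\e) B_g(X)^2$, and if $F_g(\e) \not\to 0$ this is $\gg B_g(X)^2$ regardless of how sparse $\mc{B}$ is. The sandwich argument gives pointwise control only off a density-zero set, which is an $\ell^1$ statement and cannot by itself absorb an $\ell^2$ mass concentrated on a sparse set. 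So invoking Ruzsa's lemma to extract $\lambda$ is not available at this generality.

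The paper circumvents the $\ell^2$ obstacle by working with $\alpha$-moments for $\alpha \in (1,2)$. The three ingredients are: (a) Hildebrand's moment theorem \cite{HilMom}, which furnishes $\lambda = \lambda(X)$ together with the bound $\sum_{X^\delta < p^k \leq X} |g(p^k)-\lambda\log p^k|/p^k \ll_\alpha (\log(1/\delta))^{1/2} \bigl(\frac{1}{X}\sum |g(n)-A_g(X)|^\alpha\bigr)^{1/\alpha}$; (b) Elliott's dual Tur\'an--Kubilius in $\ell^\alpha$ \cite{EllDual}, giving the same type of control on $\sum |g(p^k) - A_g(X) + A_g(X/p^k)|/p^k$; and (c) H\"older interpolation between your $\ell^1$ bound (the sandwich/telescoping argument, which is correct and is essentially the paper's Lemma~\ref{lem:SmallMoments}) and the trivial Tur\'an--Kubilius $\ell^2$ bound, yielding $\frac{1}{X}\sum |g(n)-A_g(X)|^\alpha = o(B_g(X)^\alpha)$ for each fixed $\alpha < 2$. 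Combining (a)--(c) by the triangle inequality gives the main estimate without ever needing the second moment to be small. Two smaller remarks: the $\lambda$ here is Hildebrand's, not Ruzsa's $\lambda_0$ (they are later shown to agree to acceptable error, but that is a separate step); and your elementary coprimality argument, while morally sound, appears to produce an error $o(B_g(X)\log(1/\delta))$ rather than $o(B_g(X)\sqrt{\log(1/\delta)})$, since the per-$p^k$ error is $o(B_g(X))$ and you sum $\sum 1/p^k \asymp \log(1/\delta)$ terms --- the paper recovers the square root via the $\ell^\alpha$ duality machinery.
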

\begin{rem}
We would like to determine $A_g(t)$ directly as a function of $t$ in some range, say $X^{\delta} < t \leq X$. Proposition \ref{prop:AgLog} provides the approximation $A_g(t) = A_g(X^{\delta}) + (1-\delta u) \lambda(X) \log t + o(B_g(X))$, where $u := \log X/\log t$, but this still contains a reference to a second value $A_g(X^{\delta})$. We might iterate this argument to obtain (using the slow variation of $\lambda$) a further approximation in terms of $A_g(X^{\delta^2})$, $A_g(X^{\delta^3})$, and so forth, but without further data about $g$ (say, $A_g(X^{1/1000}) = o(B_g(X))$) it is not obvious that this argument yields an asymptotic formula for $A_g(t)$ alone. 
\end{rem}

To prove this we will require a few lemmas.
\begin{lem} \label{lem:EllLS}
Let $g: \mb{N} \ra \mb{C}$ be an additive function. Let $\alpha \in (1,2)$ and $\frac{\log\log X}{\sqrt{\log X}} < \delta \leq 1/4$. Then
$$
\sum_{X^{\delta} < p^k \leq X} \frac{1}{p^k}|g(p^k)-A_g(X)+A_g(X/p^k)| \ll_{\alpha} (\log(1/\delta))^{1/2}\left(\frac{1}{X}\sum_{n \leq X} |g(n)-A_g(X)|^{\alpha}\right)^{1/\alpha} + \frac{B_g(X)}{(\log X)^{1/4}}.
$$
%Moreover, if $\eta \in (0,1/3)$ then also
%$$
%\sum_{p \leq X^{\eta}} \frac{1}{p}|g(p)-A_g(X)+A_g(X/p)| \ll B_g(X)\left(\left(\frac{|\mc{B}(X)|}{X}\right)^{1/2} \log\log X +\frac{1}{\sqrt{\log\log X}}\right).
%$$
\end{lem}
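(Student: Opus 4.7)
The approach is to identify the quantity $g(p^k) - A_g(X) + A_g(X/p^k)$ with the local mean of $g-A_g(X)$ over multiples (more precisely, exact multiples) of $p^k$, and then combine Hölder's inequality with an $\ell^\alpha$-version of Elliott's dual Tur\'an-Kubilius inequality (Lemma \ref{lem:DualTK1}).

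First I would exploit additivity: writing $n = p^k m$ with $(m,p)=1$, we have $g(n) = g(p^k) + g(m)$. Summing over such $n \leq X$, using $\#\{m \leq X/p^k : (m,p) = 1\} = (X/p^k)(1-1/p) + O(1)$ together with a Tur\'an-Kubilius-type estimate for $\sum_{m \leq X/p^k, (m,p)=1}(g(m) - A_g(X/p^k))$, I would derive the identity
\[
(1-1/p)\bigl(g(p^k) + A_g(X/p^k) - A_g(X)\bigr) = \frac{p^k}{X}\sum_{\substack{n \leq X \\ p^k\| n}}(g(n) - A_g(X)) + \mc{E}_{p^k},
\]
where $\mc{E}_{p^k}$ collects the coprimality error and the correction from replacing the restricted mean value $A^{(p)}_g(X/p^k)$ by $A_g(X/p^k)$ (a discrepancy of size $O(\sum_{j\geq 1}|g(p^j)|p^{-j}) = O(B_g(X)/\sqrt{p})$).

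Next, with $\alpha' = \alpha/(\alpha - 1)$, Hölder's inequality applied with the splitting $1/p^k = p^{-k/\alpha'}\cdot p^{-k/\alpha}$ gives
\[
\sum_{X^\delta < p^k \leq X}\frac{|g(p^k) - A_g(X) + A_g(X/p^k)|}{p^k} \leq \Bigl(\sum_{X^\delta < p^k \leq X}\tfrac{1}{p^k}\Bigr)^{1/\alpha'}\Bigl(\sum_{X^\delta < p^k \leq X}\tfrac{|g(p^k)-A_g(X)+A_g(X/p^k)|^\alpha}{p^k}\Bigr)^{1/\alpha}.
\]
Mertens' theorem yields the first factor $\ll (\log(1/\delta))^{1/\alpha'}$, and since $\alpha < 2$ forces $1/\alpha' < 1/2$ while $\delta \leq 1/4$ ensures $\log(1/\delta) \geq \log 4$, this is $\ll (\log(1/\delta))^{1/2}$. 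Substituting the identity from Step 1 into the second factor (and absorbing $(1-1/p)^{-1} = 1 + O(1/p)$), the task reduces to the $\ell^\alpha$ bound
\[
\sum_{p^k \leq X}\frac{1}{p^k}\left(\frac{p^k}{X}\right)^{\alpha}\bigg|\sum_{\substack{n \leq X \\ p^k \| n}}(g(n) - A_g(X))\bigg|^\alpha \ll \frac{1}{X}\sum_{n \leq X}|g(n) - A_g(X)|^\alpha,
\]
an $\ell^\alpha$-adaptation of Lemma \ref{lem:DualTK1}. I would obtain this by Marcinkiewicz interpolation between the $\alpha = 2$ case (Lemma \ref{lem:DualTK1}, extended from $p|n$ to $p^k \| n$ by writing $\mathbf{1}_{p^k\|n} = \mathbf{1}_{p^k|n}-\mathbf{1}_{p^{k+1}|n}$ and applying the lemma to each term) and a trivial $\alpha = 1$ estimate.

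Finally, the aggregated error $\sum_{X^\delta < p^k \leq X}|\mc{E}_{p^k}|/p^k$ from the identification step is controlled on average: the dominant contribution comes from the restricted Tur\'an-Kubilius error, of average size $B_g(X/p^k)/\sqrt{\log(X/p^k)} \ll B_g(X)/\sqrt{\delta \log X}$, which by the hypothesis $\delta > \log\log X/\sqrt{\log X}$ is $\ll B_g(X)/(\log X)^{1/4}$. The principal obstacle I anticipate is the clean $\ell^\alpha$ dual Tur\'an-Kubilius estimate under the exact-divisibility condition $p^k\|n$, and the handling of prime powers $p^k$ near $X$, where $X/p^k$ is too small for the Tur\'an-Kubilius input to be effective; this boundary range would be dealt with separately via Cauchy-Schwarz, taking advantage of the bound $\sum_{X/(\log X)^A < p^k \leq X}1/p^k \ll \log\log X/\log X$ for $A$ large.
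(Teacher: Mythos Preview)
Your overall architecture---identify $g(p^k)-A_g(X)+A_g(X/p^k)$ with the normalised local sum $\frac{p^k}{X}\sum_{p^k\|n}(g(n)-A_g(X))$ up to controllable errors, then apply H\"older and an $\ell^\alpha$ dual Tur\'an--Kubilius bound---is exactly the paper's strategy. The paper also carries out the identification via Lemma~\ref{lem:mean}, bounds the resulting error terms to get the $B_g(X)/(\log X)^{1/4}$ contribution, and then reduces to a bound on $\sum_{p^k}p^{-k}\Delta_{g'}(X;p^k)^\beta$.

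The substantive gap is in your proposed derivation of the $\ell^\alpha$ dual inequality. You claim it follows by Marcinkiewicz interpolation between Lemma~\ref{lem:DualTK1} ($\alpha=2$) and a ``trivial $\alpha=1$ estimate''. But the $\alpha=1$ endpoint is \emph{not} available, even in weak type. For $a=\mathbf{1}_{\{n_0\}}$ one has $\|a\|_1=1/X$, while the operator values at prime powers $p^k\nmid n_0$ are all of size $\asymp 1/X$; the super-level set $\{p^k:|Ta(p^k)|>c/X\}$ therefore has $\mu$-measure $\asymp\sum_{p^k\le X}p^{-k}\asymp\log\log X$, not $O(1)$. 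Interpolating with this endpoint produces a constant depending on $\log\log X$, not $O_\alpha(1)$. The required inequality
\[
\sum_{p^k\le X}\frac{1}{p^k}\left|\frac{p^k}{X}\sum_{\substack{n\le X\\p^k\|n}}a(n)-\frac{1}{p^k}\Big(1-\tfrac1p\Big)\sum_{n\le X}a(n)\right|^\alpha \ll_\alpha \frac{1}{X}\sum_{n\le X}|a(n)|^\alpha
\]
for $1<\alpha\le2$ is a genuine theorem of Elliott (\cite[Thm.~3.1]{EllDual}), and the paper invokes it as a black box. Once you have that input your single H\"older with exponent $\alpha$ works fine (and is in fact slightly cleaner than the paper's $\mc{P}_1/\mc{P}_2$ split); but the interpolation sketch does not supply it.
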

\begin{proof}
To prove the result, we will estimate the quantity
$$
\mathscr{M} := \frac{1}{X}\sum_{X^{\delta} < p^k \leq X} \left|\sum_{\substack{n \leq X \\ p^k||n}} g(n) - \frac{1}{p^k}\left(1-\frac{1}{p}\right)\sum_{\ss{n \leq X}} g(n)\right|
$$
in two different ways. \\
First, using Lemma \ref{lem:mean} we observe that for each $p^k \leq X$,
\begin{align*}
\sum_{\substack{n \leq X \\ p^k||n}} g(n) &= \sum_{\ss{ mp^k\leq X \\ p \nmid m}} g(mp^k) = g(p^k)\left( \llf X/p^k\rrf - \llf X/p^{k+1}\rrf\right) + \sum_{\ss{m \leq X/p^k \\ p \nmid m}} g(m) \\
&= \frac{X}{p^k}\left(1-\frac{1}{p}\right) \left(g(p^k)+ A_g(X/p^k) - \sum_{\ss{p^j \leq X/p^k \\ j \geq 1}} \frac{g(p^j)}{p^j}\right) + O\left(|g(p^k)| + \frac{XB_g(X/p^k)}{p^k\sqrt{\log (2X/p^k)}}\right),
\end{align*}
where the second error term is $0$ unless $p^k \leq X/2$. Similarly, we have
$$
\frac{1}{p^k}\left(1-\frac{1}{p}\right) \sum_{\ss{n \leq X}} g(n) = \frac{X}{p^k}\left(1-\frac{1}{p}\right) \left(A_g(X) + O\left(\frac{B_g(X)}{\sqrt{\log X}}\right)\right).
$$
We thus deduce that
\begin{align} \label{eq:firstForm}
\mathscr{M} &= \sum_{X^{\delta} < p^k \leq X} \frac{1}{p^k}\left(1-\frac{1}{p}\right) \left|g(p^k) + A_g(X/p^k) - A_g(X)\right| \nonumber\\
&+ O\left(\sum_{X^{\delta} < p^k \leq X} \frac{1}{p^k} \sum_{p^j \leq X/p^k} \frac{|g(p^j)|}{p^j} + B_g(X) \sum_{\ss{X^{\delta} < p^k \leq X/2}} \frac{1}{p^k\sqrt{\log(X/p^k)}} + \frac{1}{X}\sum_{p^k \leq X} |g(p^k)| \right) \nonumber\\
%\frac{B_g(X) \log\log X}{\sqrt{\log X}}\right) \\
&\geq \frac{1}{2}\sum_{X^{\delta} < p^k \leq X} \frac{1}{p^k} \left|g(p^k) + A_g(X/p^k) - A_g(X)\right| + O\left(\mc{R}(X) \right),
\end{align}
where we have set
$$
\mc{R}(X) :=\sum_{X^{\delta} < p^k \leq X} \frac{1}{p^k} \sum_{\ss{p^j \leq X/p^k \\ j \geq 1}} \frac{|g(p^j)|}{p^j} + B_g(X) \sum_{\ss{X^{\delta} < p^k \leq X \\ p^k \leq X/2}} \frac{1}{p^k\sqrt{\log(X/p^k)}} + \frac{1}{X}\sum_{p^k \leq X} |g(p^k)|.
$$
As in the proof of Lemma \ref{lem:mean}, the third expression in $\mc{R}(X)$ is
$$
\leq \frac{1}{\sqrt{X}} \sum_{p^k \leq X} \frac{|g(p^k)|}{p^{k/2}} \leq \left(\frac{\pi(X)}{X}\right)^{1/2} B_g(X) \ll \frac{B_g(X)}{\sqrt{\log X}}.
$$
Next, we may estimate the second expression in $\mc{R}(X)$ by
$$
\leq B_g(X)\left(\frac{1}{(\log X)^{1/4}} \sum_{\ss{X^{\delta} < p^k \leq Xe^{-\sqrt{\log X}}}}  \frac{1}{p^k} + \sum_{Xe^{-\sqrt{\log X}} < p^k \leq X/2} \frac{1}{p^k}\right) \ll \frac{B_g(X)}{(\log X)^{1/4}}.
$$
For the first expression in $\mc{R}(X)$, we use $|g(p^j)|/p^j \leq B_g(p^j)p^{-j/2}$ to get
\begin{align*}
&\sum_{X^{\delta} < p^k \leq X} \sum_{\ss{ p^j \leq X/p^k \\ j \geq 1 }} \frac{|g(p^j)|}{p^{j+k}} \leq B_g(X) \sum_{X^{\delta} < p^k \leq X} \sum_{ \ss{ p^j \leq X/p^k \\ j \geq 1}} p^{-(k+j/2)} \\
&\ll B_g(X) \left(X^{-\delta} \sum_{\ss{X^{\delta} < p^k \leq X \\ k \geq 2/\delta}} 1 + \sum_{\ss{X^{\delta} < p^k \leq X \\ p > X^{\delta^2/2}}}  p^{-k} \sum_{\ss{p^j \leq X/p^k \\ j \geq 1}} \frac{1}{p^{j/2}}\right) \\
&\ll B_g(X)\left(X^{-\delta/2} + \sum_{p > X^{\delta^2/2}} p^{-3/2}\right) \ll B_g(X) X^{-\delta^2/4}.
\end{align*}
%On the other hand, if $\mc{I} = [2,X^{\eta}]$ then similarly we get
%$$
%\ll B_g(X)\left(\sum_{p^k \leq X^{1/3}} \frac{1}{p^k} \sum_{\ss{X^{2/3} < p^j \leq X \\ j \geq 4}} \frac{1}{p^{j/2}}  + \sum_{p^k \leq X^{1/3}} \frac{1}{p^k} \sum_{\ss{X^{2/3} < p^j \leq X \\ 1 \leq j \leq 3}} \frac{1}{p^{j/2}}\right) \ll B_g(X)\left(X^{-1/12} + \sum_{\ss{p^k \leq X^{1/3} \\ p > X^{2/9}}} \frac{1}{p^{k+1/2}}\right) \ll B_g(X) X^{-1/12}.
%$$
Thus, we obtain $\mc{R}(X) \ll B_g(X)/(\log X)^{1/4}$, and so
$$
\sum_{X^{\delta} < p^k \leq X} \frac{1}{p^k} |g(p^k)+A_g(X/p^k) - A_g(X)| \leq 2 \mathscr{M} + O(B_g(X)/(\log X)^{1/4}).
$$
Next, we execute the second estimation of $\mathscr{M}$.
%, we will treat the cases $\mc{I} = [X^{\delta},X]$ and $\mc{I} = [2,X^{\eta}]$ separately. \\
%Consider first $\mc{I} = (X^{\delta},X]$. 
If $p^k \in (X^{\delta},X]$ define
$$
\Delta_{g}(X;p^k) := \frac{p^k}{X}\left|\sum_{\substack{n \leq X \\ p^k ||n}} g(n) - \frac{1}{p^k}\left(1-\frac{1}{p}\right)\sum_{\ss{n \leq X}} g(n)\right|.
$$
Set $g'(n) := g(n) -A_g(X)$ for $n \leq X$, and note that
$$
\Delta_g(X;p^k) = \Delta_{g'}(X;p^k) + O\left(\frac{p^k}{X}|A_g(X)|\right).
% = \Delta_{g'}(X;p^k) + O\left(\frac{B_g(X)\log\log X}{\log X}\right),
$$
%and for larger $p$ we have the trivial bound $\Delta_g(X;p^k) = \Delta_{g'}(X;p^k) + O(|A_g(X)|)$. 
Thus, we find
\begin{equation}\label{eq:interim}
\mathscr{M} = \sum_{X^{\delta} < p^k \leq X} \frac{1}{p^k}\Delta_{g'}(X;p^k)+ O\left(\frac{|A_g(X)|\pi(X)}{X}\right) = \sum_{X^{\delta}< p^k \leq X} \frac{1}{p^k}\Delta_{g'}(X;p^k) + O\left(\frac{B_g(X) \log\log X}{\log X}\right).
\end{equation}
Let us now partition the set of prime powers $X^{\delta} < p^k \leq X$ into the sets
\begin{align*}
\mc{P}_1 &:= \left\{X^{\delta} < p^k \leq X : \Delta_{g'}(X;p^k) > \left(\frac{1}{X}\sum_{n \leq X}|g'(n)|^{\alpha}\right)^{1/\alpha}\right\} \\
\mc{P}_2 &:= \left\{X^{\delta} < p^k \leq X : \Delta_{g'}(X;p^k) \leq \left(\frac{1}{X}\sum_{n \leq X}|g'(n)|^{\alpha}\right)^{1/\alpha}\right\}.
\end{align*}
Note that by Mertens' theorem,
\begin{align} \label{eq:ppMertens}
\sum_{X^{\delta} < p^k \leq X} \frac{1}{p^k} &\leq \sum_{X^{\delta} < p \leq X} \frac{1}{p} + X^{-\delta} \sum_{\ss{X^{\delta} < p^k \leq X \\ k \geq 2/\delta}} 1 + \sum_{\ss{X^{\delta} < p^k \leq X \\ 2 \leq k \leq 2/\delta \\ p > X^{\delta^2/2}}} \frac{1}{p^k} \nonumber\\
&\ll \log(1/\delta)\left(1+X^{-\delta^2/2}\right) + X^{-\delta/2} \ll \log(1/\delta).
\end{align}
Using this and H\"{o}lder's inequality, we obtain
\begin{align*}
&\sum_{X^{\delta} < p^k \leq X} p^{-k} \Delta_{g'}(X;p^k) \\
&\ll_{\alpha} \left(\log\left(1/\delta\right)\right)^{1-\frac{1}{\alpha}}\left(\sum_{\substack{X^{\delta} < p^k \leq X \\ p^k \in \mc{P}_1}} p^{-k} \Delta_{g'}(X;p^k)^{\alpha}\right)^{\frac{1}{\alpha}} + (\log(1/\delta))^{\frac{1}{2}} \left(\sum_{\substack{X^{\delta} < p^k \leq X \\ p^k \in \mc{P}_2}} p^{-k} \Delta_{g'}(X;p^k)^{2}\right)^{\frac{1}{2}}.
\end{align*}
By Theorem 3.1 of \cite{EllDual} this is bounded by
$$
\ll (\log(1/\delta))^{\frac 12}\left(\frac{1}{X} \sum_{n \leq X}|g'(n)|^{\alpha}\right)^{\frac 1\alpha}.
$$
Combining this with \eqref{eq:firstForm} and \eqref{eq:interim} completes the proof of the lemma. 
\end{proof}

Next, we show that, in an $\ell^1$ sense, $g(p^k)$ is well-approximated by $\lambda \log p^k$ on average over the prime powers $X^{\delta} < p^k \leq X$, for some function $\lambda = \lambda(X)$.
\begin{lem} \label{lem:AlphaMom}
There is a parameter $\lambda=\lambda(X) \in \mb{R}$ such that the following holds. For any $\alpha \in (1,2)$, 
$$
\sum_{X^{\delta}<p^k \leq X} \frac{1}{p^k}|g(p^k)-\lambda \log p^k| \ll_{\alpha} (\log(1/\delta))^{1/2}\left(\frac{1}{X}\sum_{n \leq X} |g(n)-A_g(X)|^{\alpha}\right)^{1/\alpha}.
$$
\end{lem}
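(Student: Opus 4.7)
The strategy is to combine Lemma \ref{lem:EllLS} with a triangle inequality after a judicious choice of $\lambda=\lambda(X)$. Write $a(p^k) := A_g(X) - A_g(X/p^k)$. Lemma \ref{lem:EllLS} directly controls
$\sum_{X^\delta < p^k \leq X} p^{-k}|g(p^k) - a(p^k)|$ by the desired quantity, so by the triangle inequality the task reduces to producing $\lambda=\lambda(X)\in\mb{R}$, independent of $\delta$ and of $\alpha$, such that
$$
\sum_{X^\delta < p^k \leq X} \frac{|a(p^k) - \lambda \log p^k|}{p^k} \ll_\alpha (\log(1/\delta))^{1/2}\left(\frac{1}{X}\sum_{n\le X}|g(n)-A_g(X)|^\alpha\right)^{1/\alpha}.
$$

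I would take $\lambda(X)$ to be the Ruzsa parameter from Lemma \ref{lem:Ruzsa},
$$
\lambda(X) := \frac{2}{(\log X)^2}\sum_{p \leq X} \frac{g(p)\log p}{p},
$$
which is the best weighted $\ell^2$-fit of $g(p^k)$ to $c\log p^k$. The plan is then to apply Lemma \ref{lem:EllLS} a second time, to the additive function $g_\lambda(n) := g(n)-\lambda \log n$. By Mertens' theorem, $A_L(X) - A_L(X/p^k) = \log p^k + O(1)$, where $L=\log$, so
$$
a(p^k) - \lambda \log p^k = \bigl(A_{g_\lambda}(X) - A_{g_\lambda}(X/p^k)\bigr) + O(\lambda).
$$
Using the trivial bound $|A_{g_\lambda}(X) - A_{g_\lambda}(X/p^k)| \leq |g_\lambda(p^k)| + |g_\lambda(p^k) - A_{g_\lambda}(X) + A_{g_\lambda}(X/p^k)|$, Lemma \ref{lem:EllLS} applied to $g_\lambda$ handles the second summand, while the first is controlled via Cauchy--Schwarz against $(\log(1/\delta))^{1/2} B_{g_\lambda}(X)$; the latter is small by Lemma \ref{lem:Ruzsa}, which gives $B_{g_\lambda}(X)^2 \ll \tfrac{1}{X}\sum_{n\le X}|g(n)-A_g(X)|^2$. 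The $O(\lambda)$ contribution, summed against $\sum p^{-k} \ll \log(1/\delta)$, is harmless because the Ruzsa formula combined with Mertens yields $|\lambda|\ll B_g(X)/\log X$.

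The main obstacle is the mismatch between the required $\alpha$-th moment bound with $\alpha\in(1,2)$ and the $\ell^2$ form in which Ruzsa's lemma naturally enters. I would resolve this by invoking Lemma \ref{lem:EllLS} in its $\alpha$-moment form directly on $g_\lambda$, and controlling $M_\alpha(g_\lambda;X)$ by $M_\alpha(g;X)$ via the identity $g_\lambda(n)-A_{g_\lambda}(X) = (g(n)-A_g(X)) - \lambda(\log n - A_L(X))$ together with the (easily checked) bound $\tfrac{1}{X}\sum_{n\le X}|\log n - A_L(X)|^\alpha \ll 1$; this gives $M_\alpha(g_\lambda;X) \ll M_\alpha(g;X) + |\lambda|$, and the $|\lambda|$ term is again absorbed using $|\lambda|\ll B_g(X)/\log X \ll M_\alpha(g;X)$ under the relevant normalizations. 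The residual error $B_g(X)/(\log X)^{1/4}$ from Lemma \ref{lem:EllLS} is of lower order and is absorbed into the implicit constant for $X$ large.
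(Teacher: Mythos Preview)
Your approach has two genuine gaps, and in fact the paper does not go through Lemma~\ref{lem:EllLS} at all.

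First, routing through Lemma~\ref{lem:EllLS} introduces the additive error $B_g(X)/(\log X)^{1/4}$, which is \emph{not} present in the statement of Lemma~\ref{lem:AlphaMom} and cannot be ``absorbed into the implicit constant''. Take $g=c\log$: then $M_\alpha(g;X):=\bigl(\tfrac{1}{X}\sum_{n\le X}|g(n)-A_g(X)|^\alpha\bigr)^{1/\alpha}\asymp |c|$ is bounded, while $B_g(X)/(\log X)^{1/4}\asymp |c|(\log X)^{3/4}$. Your bound would thus be off by a power of $\log X$, even though with $\lambda=c$ the left side vanishes. The same issue invalidates your claim that $|\lambda|\ll B_g(X)/\log X\ll M_\alpha(g;X)$: the second inequality is false in general.

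Second, your handling of $\sum_{X^\delta<p^k\le X}p^{-k}|g_\lambda(p^k)|$ via Cauchy--Schwarz and Ruzsa yields $(\log(1/\delta))^{1/2}B_{g_\lambda}(X)$ with $B_{g_\lambda}(X)\ll M_2(g;X)$, not $M_\alpha(g;X)$. Since $M_\alpha\le M_2$ for $\alpha<2$, this goes the wrong way. You acknowledge the mismatch but the proposed fix---applying Lemma~\ref{lem:EllLS} to $g_\lambda$---controls a different quantity and does not recover a bound on $B_{g_\lambda}(X)$ in terms of $M_\alpha$.

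The paper's proof is a one-step citation: Hildebrand's moment theorem \cite[Th\'eor\`eme~1]{HilMom} furnishes $\lambda=\lambda(X)$ and a threshold $c=c(X)$ (both independent of $\alpha$) for which
\[
M_\alpha(g;X)\ \gg_\alpha\ \Bigl(\sum_{p^k\le X}\frac{|g_{\lambda,c}''(p^k)|^\alpha}{p^k}\Bigr)^{1/\alpha}+\Bigl(\sum_{p^k\le X}\frac{|g_{\lambda,c}'(p^k)|^2}{p^k}\Bigr)^{1/2},
\]
with $g_\lambda=g_{\lambda,c}'+g_{\lambda,c}''$ the split at level $c$. One then applies H\"older (with exponent $\alpha$ on the large part, exponent $2$ on the small part) over $X^\delta<p^k\le X$, using \eqref{eq:ppMertens}. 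No detour through Lemma~\ref{lem:EllLS} or Ruzsa is needed; Hildebrand's result is precisely the missing ingredient that relates the $\alpha$-th centred moment to prime-power $\ell^\alpha/\ell^2$ data for $g_\lambda$.
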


\begin{proof}
By \cite[Th\'{e}or\`{e}me 1]{HilMom}, there is $\lambda = \lambda(X)$ and $c = c(X)$, depending on $g$ but independent of $\alpha$, such that
\begin{align}\label{eq:momentApp}
\left(\frac{1}{X}\sum_{n \leq X} |g(n)-A_g(X)|^{\alpha}\right)^{\alpha} \gg_{\alpha} \left(\sum_{p^k \leq X} \frac{|g_{\lambda,c}''(p^k)|^{\alpha}}{p^k}\right)^{1/\alpha} + \left(\sum_{p^k \leq X} \frac{|g_{\lambda,c}'(p^k)|^2}{p^k}\right)^{1/2}.
\end{align}
Here, writing $g_{\lambda}(n) = g(n)-\lambda \log(n)$, we have set
$$
g_{\lambda,c}'(p^k) := \begin{cases} g_{\lambda}(p^k) &\text{ if } |g_{\lambda}(p^k)| \leq c \\ 0 &\text{ otherwise;} \end{cases} \quad \quad g_{\lambda,c}''(p^k) := \begin{cases} 0 &\text{ if } |g_{\lambda}(p^k)| \leq c \\ g_{\lambda}(p^k) &\text{ otherwise.} \end{cases}
$$
Since $g_{\lambda}(p^k) = g_{\lambda,c}'(p^k) + g_{\lambda,c}''(p^k)$ for each $p^k$, by H\"{o}lder's inequality and \eqref{eq:ppMertens} once again,
$$
\sum_{X^{\delta} < p^k \leq X} \frac{|g_{\lambda}(p^k)|}{p^k} \ll_{\alpha} (\log(1/\delta))^{1-1/\alpha}\left(\sum_{X^{\delta} < p^k \leq X} \frac{|g_{\lambda,c}''(p^k)|^{\alpha}}{p^k}\right)^{1/\alpha} + (\log(1/\delta))^{1/2}\left(\sum_{X^{\delta} < p^k \leq X} \frac{|g_{\lambda,c}'(p^k)|^2}{p^k}\right)^{1/2}.
$$
The results now follows upon combining this last estimate with \eqref{eq:momentApp} and using positivity.
\end{proof}

\begin{lem} \label{lem:SmallMoments}
Assume that $\mc{B} := \{n \in \mb{N} : g(n) < g(n-1)\}$ satisfies $d\mc{B} = 0$. Then there is an absolute constant $c > 0$ for any $\alpha \in [1,2)$,
$$
\frac{1}{X} \sum_{n \leq X} |g(n)-A_g(X)|^{\alpha} \ll \left(\frac{\log\log(1/r(X))}{\log(1/r(X))} + (\log X)^{-c} \right)^{2-\alpha} B_g(X)^{\alpha},
$$
where $r(x) := \left(\frac{|\mc{B}(X)|}{X}\right)^{1/2} + \frac{\log X}{\sqrt{X}}$.
%The error term depends only on $\alpha$ and the ratio $|\mc{B}(X)|/X$.
\end{lem}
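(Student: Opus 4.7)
The strategy is to combine an $\ell^1$ centred-moment estimate (derived from the hypothesis $|\mc{B}(X)|=o(X)$) with the trivial $\ell^2$ Tur\'an--Kubilius bound, and then interpolate via H\"older's inequality. The interpolation $\tfrac{1}{X}\sum_{n\leq X} |f(n)|^\alpha \leq M_1^{2-\alpha} M_2^{\alpha-1}$ (valid for $1\leq\alpha\leq 2$, obtained by applying H\"older with exponents $1/(2-\alpha)$ and $1/(\alpha-1)$ to the factorisation $|f|^\alpha = |f|^{2-\alpha} \cdot |f|^{2(\alpha-1)}$) converts bounds on $M_1 := \frac{1}{X}\sum_{n\leq X}|g(n)-A_g(X)|$ and $M_2 := \frac{1}{X}\sum_{n\leq X}|g(n)-A_g(X)|^2$ into the desired estimate on $M_\alpha$, with the exponent $2-\alpha$ on the $M_1$-factor matching the shape of the conclusion.

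To estimate $M_1$, I would first control the average consecutive gap. Since $g(n)\geq g(n-1)$ off of $\mc{B}$, telescoping gives
$$
\sum_{n \leq Y} |g(n)-g(n-1)| = g(Y) + 2\sum_{n\in\mc{B}(Y)}(g(n-1)-g(n)).
$$
Lemma \ref{lem:ptwiseBdg} bounds $|g(Y)|/Y \ll (\log Y)/\sqrt{Y}\cdot B_g(Y)$. For the $\mc{B}(Y)$-sum, Cauchy--Schwarz together with the trivial $\ell^2$ gap bound $\sum_{n\leq Y}(g(n)-g(n-1))^2 \ll Y B_g(Y)^2$ (immediate from Lemma \ref{lem:TK}) yields $\ll (Y|\mc{B}(Y)|)^{1/2} B_g(Y)$. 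Combining,
$$
\frac{1}{Y}\sum_{n \leq Y} |g(n)-g(n-1)| \ll r(Y)\,B_g(Y).
$$
Since $|\mc{B}(Y)|\leq|\mc{B}(X)|$ and $B_g(Y)\leq B_g(X)$ for $X/\log X < Y \leq X$, this bound holds uniformly with right-hand side $\ll \sqrt{\log X}\,r(X)\,B_g(X)$ on that dyadic range. Feeding this into Proposition \ref{prop:convGaps} (after noting $\log(1/(\sqrt{\log X}\,r(X))) \asymp \log(1/r(X))$ in the non-trivial regime $r(X) \leq (\log X)^{-C}$) produces an $\ell^1$ bound of the form $M_1 \ll (h(r(X)) + (\log X)^{-c}) B_g(X)$, where $h(\cdot)$ captures the $\log\log/\log$ dependence on $1/r(X)$. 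Plugging the $M_1$ bound together with the Tur\'an--Kubilius estimate $M_2 \ll B_g(X)^2$ into the H\"older interpolation gives the stated conclusion upon raising to the $(2-\alpha)$ power.

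\textbf{Principal obstacle.} The bookkeeping for Step 2 is delicate: Proposition \ref{prop:convGaps} as stated contains a square-root in the decay rate, so the cleanest path to the bound stated with exponent $(2-\alpha)$ on $\log\log(1/r)/\log(1/r)$ is to bypass Proposition \ref{prop:convGaps} and instead mimic the direct argument of Theorem \ref{thm:iterStep} (Part I), splitting each dyadic range $(Y/2,Y]$ into the $R$-neighbourhood $\mc{B}_R(Y)$ of $\mc{B}$ and its complement $\mc{G}_R(Y)$. On $\mc{G}_R(Y)$, monotonicity together with positivity reduces $|g(n)-A_g(Y)|$ to a short-interval-average deviation, handled by Theorem \ref{thm:MRL1}; on $\mc{B}_R(Y)$, Cauchy--Schwarz with Lemma \ref{lem:TK} contributes $\ll R^{1/2} r(X) B_g(X)$. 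Optimising $R$ against the $\sqrt{\log\log R/\log R}$ MR--term, and choosing the constant $c$ sufficiently small to absorb the $(\log X)^{-1/800}$ error term from Theorem \ref{thm:MRL1}, then yields the stated $M_1$ bound; the interpolation step then propagates this to the general $\alpha$-moment.
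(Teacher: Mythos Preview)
Your proof plan is exactly the paper's argument: H\"older interpolation between $M_1$ and $M_2$, Tur\'an--Kubilius for $M_2$, and for $M_1$ the telescoping identity $|g(n)-g(n-1)| = (g(n)-g(n-1)) + 2|g(n)-g(n-1)|1_{\mc{B}}(n)$ combined with Lemma~\ref{lem:ptwiseBdg} and Cauchy--Schwarz against Lemma~\ref{lem:TK}, followed by an appeal to Proposition~\ref{prop:convGaps}.

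Your observation about the square root is legitimate: Proposition~\ref{prop:convGaps} indeed delivers $\sqrt{\log\log(1/\e)/\log(1/\e)}$, not $\log\log(1/\e)/\log(1/\e)$. The paper's proof, however, simply writes the bound for $M_1$ without the square root after citing Proposition~\ref{prop:convGaps}; this appears to be a minor slip in the paper, so the stated exponent $(2-\alpha)$ on the $\log\log/\log$ factor should strictly be $(2-\alpha)/2$. For every subsequent application in the paper only the qualitative consequence $M_\alpha = o(B_g(X)^\alpha)$ is used, so this discrepancy is harmless. Your proposed workaround in the ``principal obstacle'' paragraph (mimicking the $\mc{G}_R/\mc{B}_R$ splitting) is therefore unnecessary here; the paper does not take that route, and neither should you for this lemma.
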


\begin{proof}
By H\"{o}lder's inequality, for any $\alpha \in (1,2)$,
%(or Riesz-Thorin interpolation on suitably defined $L^p$ spaces), 
we have
$$
\frac{1}{X}\sum_{n \leq X} |g(n)-A_g(X)|^{\alpha} \leq \left(\frac{1}{X}\sum_{n \leq X}|g(n)-A_g(X)|\right)^{2-\alpha} \cdot \left(\frac{1}{X}\sum_{n \leq X} |g(n)-A_g(X)|^2\right)^{\alpha-1},
$$
an inequality that is vacuously also true when $\alpha = 1$. Applying Lemma \ref{lem:TK} to the second bracketed expression, we obtain the upper bound
$$
\ll B_g(X)^{2\alpha-2} \left(\frac{1}{X}\sum_{n \leq X} |g(n)-A_g(X)|\right)^{2-\alpha}.
$$
%It therefore remains to prove that
%$$
%\frac{1}{X}\sum_{n \leq X} |g(n)-A_g(X)| \ll \left(\frac{,
%$$
%in a manner depending only on $|\mc{B}(X)|/X$, after which point the proof will be complete. 
Next, we show that
\begin{equation}\label{eq:suffGaps}
\frac{1}{X}\sum_{n \leq X}|g(n)-g(n-1)| \ll \left(\left(\frac{|\mc{B}(X)|}{X}\right)^{1/2} + \frac{\log X}{\sqrt{X}} \right)B_g(X) = r(X) B_g(X).
\end{equation}
%where the parameters $\eta$ and $\e^2 = |\mc{B}(X)|/X$ are connected via $\eta^2 = \frac{\log\log(1/\e)}{\log 1/\e}$. 
This will imply the claim of the lemma, since by Proposition \ref{prop:convGaps} the latter bound implies that for some small constant $c > 0$,
$$
\frac{1}{X}\sum_{n \leq X} |g(n)-A_g(X)| \ll \left(\frac{\log\log (1/r(X))}{\log(1/r(X))} + (\log X)^{-c}\right) B_g(X).
% \ll \frac{\log\log (1/r(X))}{\log(1/r(X))} B_g(X).
$$
To prove \eqref{eq:suffGaps}, we note that for all $1 \leq n \leq X$, 
$$
|g(n)-g(n-1)| = g(n)-g(n-1) + 2|g(n)-g(n-1)|1_{n \in \mc{B}(X)}.
$$
It follows from this and telescoping that 
$$
\frac{1}{X}\sum_{n \leq X} |g(n)-g(n-1)| = \frac{g(\llf X\rrf)}{X} + \frac{2}{X}\sum_{n \in \mc{B}(X)} |g(n)-g(n-1)|.
$$
By Lemma \ref{lem:ptwiseBdg}, $g(\llf X \rrf)/X \ll B_g(X)(\log X)/\sqrt{X}$. Owing to Lemma \ref{lem:TK} and the triangle and Cauchy-Schwarz inequalities, we also obtain
$$
\frac{1}{X}\sum_{n \in \mc{B}(X)}|g(n)-g(n-1)| \leq 2\left(\frac{|\mc{B}(X)|}{X}\right)^{1/2} \left(\frac{1}{X} \sum_{n \leq X} |g(n)-A_g(X)|^2\right)^{1/2} \ll B_g(X) \left(\frac{|\mc{B}(X)|}{X}\right)^{1/2}.
$$
This implies \eqref{eq:suffGaps}, and completes the proof of the lemma.
%To conclude, we have shown that
%$$
%\frac{1}{X}\sum_{n \leq X} |g(n)-A_g(X)| = o(B_g(X)),
%$$
%with a bound depending only on $|\mc{B}(X)|/X$, and the proof is now complete.
\end{proof}

\begin{proof}[Proof of Proposition \ref{prop:AgLog}]
The first part of the proposition follows by the triangle inequality, upon combining Lemmas \ref{lem:EllLS} and \ref{lem:AlphaMom} with Lemma \ref{lem:SmallMoments} (taking $\alpha = 3/2$, say). \\
%For the second, we note that by Mertens' theorem we have
% $$
% $$
% |A_g(X)-A_g(X^{\delta})-(1-\delta) 
Next, we proceed to the proofs of properties (i)-(iii). \\
(i) By the triangle inequality and positivity, we obtain
$$
\lambda(X)\sum_{X^{1/4} < p \leq X^{1/2}} \frac{\log p}{p} \leq
\sum_{X^{1/4} < p \leq X} \frac{|A_g(X)-A_g(X/p)-\lambda(X)\log p|}{p} + \sum_{X^{1/4} < p \leq X^{1/2}} \frac{|A_g(X)-A_g(X/p)|}{p}.
$$
By Mertens' theorem, 
\begin{align*}
&\sum_{X^{1/4} < p \leq X^{1/2}} \frac{\log p}{p} = \frac{1}{4}\log X+ O\left(\frac{1}{\log X}\right) \gg \log X,
\end{align*}
and by the Cauchy-Schwarz inequality we have, for $X^{1/4} \leq p \leq X^{1/2}$,
\begin{align*}
&|A_g(X)-A_g(X/p)| \ll B_g(X) \left(\sum_{X/p \leq q^k \leq X} \frac{1}{q^k}\right)^{1/2} \ll B_g(X)\left(\frac{\log p}{\log X}\right)^{1/2}.
\end{align*}
We thus deduce from the first part of the proposition, the prime number theorem and partial summation that
$$
\lambda(X) \log X \ll o(B_g(X)) + \frac{B_g(X)}{\sqrt{\log X}} \sum_{p \leq X^{1/2}} \frac{(\log p)^{1/2}}{p} \ll B_g(X),
$$
and (i) follows immediately.\\
(ii) We observe, using (i) and Lemmas \ref{lem:AlphaMom} and \ref{lem:SmallMoments} that if $X^{\delta} < t_1 \leq t_2 \leq X$,
\begin{align*}
&\left|A_g(t_2)-A_g(t_1) - \lambda(X) \log(t_2/t_1)\right| \\
&= \left|\sum_{t_1 < p^k \leq t_2} \left(1-\frac{1}{p}\right) \frac{g(p^k)-\lambda(X) \log p^k}{p^k}\right| + O\left(\lambda(X) \left(\frac{\delta^{-1}}{\log X} + \sum_{t_1 < p^k \leq t_2} \frac{\log p^k}{p^{k+1}} \right) \right) \\
&\leq \sum_{X^{\delta} < p^k \leq X} \frac{|g(p^k)-\lambda(X)\log p^k|}{p^k} + O\left(\frac{B_g(X)}{\sqrt{\log X}}\right) \\
&= o((\log(1/\delta)^{1/2}B_g(X)),
\end{align*}
as required.\\
(iii) Applying (ii) with $(t_1,t_2) = (X^y,X^z)$, where $(y,z) = (u,1)$, $(y,z) = (uv,1)$ and $(y,z) = (uv,u)$ for any $v \in (\delta/u,1/2]$ and $u \in (\delta,1]$, we get
\begin{align*}
A_g(X) - A_g(X^u) &= (1-u) \lambda(X)\log X + o((\log(1/\delta)^{1/2}B_g(X)) \\
A_g(X)-A_g(X^{uv}) &= (1-uv)\lambda(X)\log X + o((\log(1/\delta)^{1/2}B_g(X)) \\
A_g(X^u) - A_g(X^{uv}) &= (u-uv) \lambda(X^u) \log X + o((\log(1/\delta)^{1/2}B_g(X^u)).
\end{align*}
Combining these equations and using $B_g(X^u) \leq B_g(X)$, we conclude that
$$
u(1-v) \lambda(X)\log X = u(1-v)\lambda(X^u)\log X + o((\log(1/\delta)^{1/2}B_g(X)).
$$
Since $1-v \geq 1/2$ and $u > \delta$, the claim follows immediately upon rearranging (with a potentially larger implicit constant in the error term).
\end{proof}

\begin{proof}[Proof of Theorem \ref{thm:iterStep}: Part II]
The first part of the proof implied that
$$
\sum_{X^{1/2} < p^k \leq X} \frac{|g(p^k)-\lambda_0(X)\log p^k|^2}{p^k} = o(B_g(X)^2).
$$
Now, combining Lemmas \ref{lem:AlphaMom} and \ref{lem:SmallMoments}, we have that
$$
\sum_{X^{\delta} < p^k \leq X} \frac{|g(p^k)-\lambda(X) \log p^k|}{p^k} = o(B_g(X)),
$$
where, according to Proposition \ref{prop:AgLog} we have
$$
\lambda(X) = \lambda(X^u) + O(B_g(X)/\log X), \quad 0 < u \leq 1 \text{ fixed.}
$$
Thus, by these two estimates, Cauchy-Schwarz and Mertens' theorem, whenever $Y = X^u$ with $0 < u \leq 1$ fixed, we have
\begin{align}
|\lambda(Y)-\lambda_0(Y)| \log Y &\ll |\lambda(Y)-\lambda_0(Y)| \sum_{Y^{1/2} < p \leq Y} \frac{\log p}{p} \\
&\leq \sum_{Y^{1/2} < p \leq Y} \frac{|g(p)-\lambda_0(Y)\log p|}{p} + \sum_{Y^{1/2} < p \leq Y} \frac{|g(p)-\lambda(Y)\log p|}{p} \nonumber\\
&\leq B_{g_{\lambda_0}}(Y) \left(\sum_{Y^{1/2} < p \leq Y} \frac{1}{p}\right)^{1/2} + o(B_g(Y)) = o(B_g(X)). \label{eq:compLambLamb0}
\end{align}
We thus deduce that $\lambda(X^u) = \lambda_0(X^u) + o(B_g(X)/\log X)$ for all $0 < u \leq 1$ fixed, and therefore also that
$$
\lambda_0(X^u) = \lambda(X^u) + o\left(\frac{B_g(X)}{\log X}\right) = \lambda(X) + o\left(\frac{B_g(X)}{\log X}\right) = \lambda_0(X) + o\left(\frac{B_g(X)}{\log X}\right),
$$
and the second claim of Theorem \ref{thm:iterStep} is proved.
\end{proof}

\subsection{Proof of Corollary \ref{cor:weakErd}} \label{subsec:pfWeakErd}
In this subsection we prove Corollary \ref{cor:weakErd}. The key step will be to show that if there is a $\lambda(X)$ such that $B_{g_{\lambda}}(X) = o(B_g(X))$ (which follows from Theorem \ref{thm:iterStep}) then $B_g(X)$ grows like $\log X$. \\
We begin by showing that if $\lambda(X)$ is fairly large then $B_g(X)$ is close to $\log X$.
\begin{lem} \label{lem:largeLambda}
Assume that there is a $C > 0$ such that $\lambda(X) \geq C B_g(X)/\log X$ for all $X$ sufficiently large in the conclusion of Proposition \ref{prop:AgLog}. Then for any $\e > 0$, $(\log X)^{1-\e} \ll_{\e} B_g(X) \ll_{\e} (\log X)^{1+\e}$.
\end{lem}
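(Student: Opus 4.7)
The plan is to first pin down the asymptotic size of $\lambda(X)$ using Theorem \ref{thm:iterStep}, then use the slow variation of $\lambda$ to derive a near-linear growth law for $B_g$. To start, observe that
\[
B_g(X)^2 = \sum_{p^k \leq X} \frac{g(p^k)^2}{p^k} = \lambda(X)^2 \sum_{p^k \leq X} \frac{(\log p^k)^2}{p^k} + 2\lambda(X) \sum_{p^k \leq X}\frac{\log p^k(g(p^k)-\lambda(X)\log p^k)}{p^k} + E(X),
\]
where $E(X) := \sum_{p^k \leq X} (g(p^k)-\lambda(X)\log p^k)^2/p^k = o(B_g(X)^2)$ by Theorem \ref{thm:iterStep}. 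Mertens' theorem and the PNT give $\sum_{p^k \leq X}(\log p^k)^2/p^k \sim (\log X)^2/2$. Applying Cauchy--Schwarz to the cross term and invoking $\lambda(X) \ll B_g(X)/\log X$ from Proposition \ref{prop:AgLog}(i), the cross term is bounded by $\lambda(X) \cdot O(\log X \cdot \sqrt{E(X)}) = o(\lambda(X) B_g(X) \log X) = o(B_g(X)^2)$. Hence
\[
B_g(X)^2 = \tfrac{1}{2}\lambda(X)^2 (\log X)^2 (1+o(1)),
\]
and combining this with the hypothesis $\lambda(X) \geq C B_g(X)/\log X > 0$ gives the sharp asymptotic
\[
\lambda(X) = \sqrt{2}\, \frac{B_g(X)}{\log X}(1+o(1)).
\]

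The second step converts the slow variation of $\lambda$ (Proposition \ref{prop:AgLog}(iii)) into slow variation of $B_g$. For any fixed $u \in (0,1]$ we have $\lambda(X^u) = \lambda(X) + o(B_g(X)/\log X)$; since $\lambda(X) \gg B_g(X)/\log X$ by hypothesis, this additive error is $o(\lambda(X))$, so $\lambda(X^u) = (1+o(1))\lambda(X)$. Applying the asymptotic derived above at both $X$ and $X^u$ yields
\[
\frac{B_g(X^u)}{u \log X} \sim \frac{B_g(X)}{\log X}, \qquad \text{i.e.,} \qquad B_g(X^u) = u B_g(X)(1+o(1)).
\]

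Finally, iterate the special case $u=1/2$, i.e., $B_g(Y^2) = 2 B_g(Y)(1+o(1))$ as $Y \to \infty$. Fix $\e > 0$ and choose $Y_0 = Y_0(\e)$ so that $2(1-\e) B_g(Y) \leq B_g(Y^2) \leq 2(1+\e) B_g(Y)$ for all $Y \geq Y_0$. Iterating $k$ times produces
\[
(2(1-\e))^k B_g(Y_0) \leq B_g(Y_0^{2^k}) \leq (2(1+\e))^k B_g(Y_0).
\]
Writing $X = Y_0^{2^k}$ so that $2^k = \log X/\log Y_0$, one has $(2(1\pm\e))^k = (\log X/\log Y_0)^{1+O(\e)}$, hence $B_g(X) = (\log X)^{1 + O(\e)}$ for every $X$ of the form $Y_0^{2^k}$. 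Since $B_g$ is monotonically nondecreasing in $X$, for arbitrary $X$ sandwich it between two consecutive such values $Y_0^{2^k} \leq X \leq Y_0^{2^{k+1}}$ to obtain $(\log X)^{1-\e} \ll_\e B_g(X) \ll_\e (\log X)^{1+\e}$, as claimed.

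\textbf{Main obstacle.} The crux is converting the \emph{additive} error $o(B_g(X)/\log X)$ in the slow variation of $\lambda$ into a \emph{multiplicative} $(1+o(1))$ error; this is exactly where the lower bound hypothesis $\lambda(X) \geq C B_g(X)/\log X$ is indispensable, since without it the slow-variation error could dominate $\lambda(X)$ itself and the argument would collapse. Once this multiplicative slow variation of $\lambda$ (and hence of $B_g$) is secured, the iteration to near-linear growth in $\log X$ is routine.
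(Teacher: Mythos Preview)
Your argument follows the same core strategy as the paper---convert the additive slow-variation error $o(B_g(X)/\log X)$ into a multiplicative $(1+o(1))$ via the lower-bound hypothesis, iterate along the sequence $X_0^{2^k}$, and finish by monotonicity of $B_g$---but you take an unnecessary detour in the first step. You invoke Theorem~\ref{thm:iterStep} to derive the sharp asymptotic $\lambda(X) \sim \sqrt{2}\,B_g(X)/\log X$, then transfer the slow variation of $\lambda$ to $B_g$ and iterate on $B_g$. This imports the hypothesis $g \in \mc{A}_s$ (required for Theorem~\ref{thm:iterStep}), which is not part of the lemma as stated; moreover, the $\lambda$ appearing in Theorem~\ref{thm:iterStep} is Ruzsa's $\lambda_0$, not the $\lambda$ of Proposition~\ref{prop:AgLog}, so a further identification (carried out only in Part~II of the proof of Theorem~\ref{thm:iterStep}) would be needed.

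The paper's proof is more economical: from the hypothesis together with Proposition~\ref{prop:AgLog}(i) one has immediately $\lambda(X) \asymp B_g(X)/\log X$, with no sharp constant required. The paper then iterates directly on $\lambda$ (rather than $B_g$), obtaining $|\lambda(X)| = (\log X)^{o(1)}$ along the sequence $X_0^{2^k}$, and converts to bounds on $B_g$ only at the very end via $B_g(X) \asymp \lambda(X)\log X$. This avoids any appeal to Theorem~\ref{thm:iterStep} and keeps the lemma self-contained relative to Proposition~\ref{prop:AgLog}. Your iteration and monotonicity steps are otherwise identical to the paper's.
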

%Suppose the following condition holds:
%$$
%\limsup_{X \ra \infty} \frac{1}{B_g(X)^2} \sum_{\substack{p \leq X \\ |g(p)| > B_g(X)(\log X)^{1/2-\e}}} \frac{g(p)^2}{p} = 0 \text{ for some $\e > 0$.}
%$$
%Then $B_g(X) = (\log X)^{O(1)}$.
\begin{proof}
%Assume there is a fixed $C > 0$ such that $\lambda(X) \geq CB_g(X)/\log X$ for all $X$ large. 
By Proposition \ref{prop:AgLog}, 
\begin{equation}\label{eq:slowvar}
\lambda(X) = \lambda(X^u) + o(B_g(X)/\log X) = \lambda(X^u) + o(\lambda(X))
\end{equation}
whenever $0 < u \leq 1$ is fixed. This implies in particular that $\lambda(X) \ll \lambda(X^u)$. Setting $Y := X^u$ and $v := 1/u \geq 1$, we see also that 
$$
\lambda(Y^v) = \lambda(Y) + o(\lambda(Y^v)) = \lambda(Y) + o(\lambda(Y^{uv})) =\lambda (Y) + o(\lambda(Y)).
$$
Thus, \eqref{eq:slowvar} holds for all fixed $u \geq 1$ as well, and thus for all $u > 0$. We thus deduce that for each $u > 0$ fixed and $\e > 0$ there is $X_0(\e,u)$ such that if $X\geq X_0(\e,u)$,
$$
\left|\frac{\lambda(X^u)}{\lambda(X)} - 1\right| < \e.
$$
Set $u = 1/2$, put $X_0 = X_0(\e,1/2)$ and for each $k \geq 1$ define $X_k := X_0^{2^k}$. Then we have
$$
\frac{\lambda(X_0)}{\lambda(X_K)} = \prod_{1 \leq k \leq K} \frac{\lambda(X_{k-1})}{\lambda(X_k)} \in [(1-\e)^K, (1+\e)^K].
$$
As $K \leq 2\log\log X_K$, we find that for $K$ large enough,
\begin{align*}
|\lambda(X_K)| &\leq |\lambda(X_0)| \exp\left(-K\log(1-\e)\right) \ll_{\e} \exp\left(4\e \log\log X_K\right) = (\log X_K)^{4\e}, \\
|\lambda(X_K)| &\geq |\lambda(X_0)| \exp\left(-K\log(1+\e)\right) \gg_{\e} \exp\left(-4\e \log\log X_K\right) = (\log X_K)^{-4\e}.
\end{align*}
Thus, we have $B_g(X_K) \ll \lambda(X_K) (\log X_K) \ll_{\e} (\log X_K)^{1+4\e}$ by assumption, and by Proposition \ref{prop:AgLog}(i) we have $B_g(X_K) \gg |\lambda(X_K)| \log X_K \gg_{\e} (\log X_k)^{1-4\e}$. \\
Since $\log X_K \asymp \log X_{K+1}$, by monotonicity we also have
$$
B_g(X) \leq B_g(X_{K+1}) \ll_{\e} (\log X_{K+1})^{1+4\e} \ll (\log X_K)^{1+4\e} \leq (\log X)^{1+4\e}
$$
for any $X_K < X < X_{K+1}$. Similarly, we also obtain $B_g(X) \gg_{\e} (\log X)^{1-4\e}$ on the same interval. Since $\e > 0$ was arbitrary, the claim now follows.
\end{proof}

\begin{lem} \label{lem:smallBgLambda}
Assume $B_{g_{\lambda_0}}(X) = o(B_g(X))$ for some $\lambda_0 = \lambda_0(X)$ that satisfies $|\lambda_0| \ll B_g(X)/\log X$. 
Then for any $\e > 0$, $(\log X)^{1-\e} \ll_{\e} B_g(X) \ll_{\e} (\log X)^{1+\e}$.
\end{lem}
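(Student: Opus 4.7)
The strategy is to turn the hypothesis $B_{g_{\lambda_0}}(X) = o(B_g(X))$ into a pinned asymptotic $B_g(X) \sim |\lambda_0(X)|\log X/\sqrt{2}$, then derive slow variation of $\lambda_0$ across the scales $X$ and $X^{1/2}$, and finally run the doubling iteration from the proof of Lemma~\ref{lem:largeLambda} with $\lambda_0$ in place of $\lambda$.

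First I would invoke Minkowski's inequality for the $\ell^2$-type functional $B$: the decomposition $g = g_{\lambda_0} + \lambda_0\log$ gives
$$\bigl|B_g(X) - B_{\lambda_0\log}(X)\bigr| \le B_{g_{\lambda_0}}(X) = o(B_g(X)).$$
Partial summation based on the prime number theorem (or on Mertens' refined estimate $\sum_{n\le X}\Lambda(n)/n = \log X + O(1)$) yields $\sum_{p\le X}(\log p)^2/p = \tfrac12(\log X)^2 + O(\log X)$, with the higher-prime-power contribution being $O(1)$. Hence $B_{\lambda_0\log}(X)^2 = \tfrac12|\lambda_0(X)|^2(\log X)^2(1+o(1))$, so
$$B_g(X) = (1+o(1))\,\frac{|\lambda_0(X)|\log X}{\sqrt{2}}.$$
Combined with the hypothesized upper bound $|\lambda_0(X)| \ll B_g(X)/\log X$, this gives the two-sided comparison $|\lambda_0(X)| \asymp B_g(X)/\log X$.

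Next I would prove slow variation in the form $\lambda_0(X^{1/2})/\lambda_0(X) \to 1$. Since $B_{g_{\lambda_0(X)}}(X^{1/2}) \le B_{g_{\lambda_0(X)}}(X) = o(B_g(X))$, and since applying the hypothesis at scale $X^{1/2}$ gives $B_{g_{\lambda_0(X^{1/2})}}(X^{1/2}) = o(B_g(X^{1/2})) \le o(B_g(X))$, Minkowski then yields
$$B_{(\lambda_0(X)-\lambda_0(X^{1/2}))\log}(X^{1/2}) = o(B_g(X)).$$
Expanding the left-hand side using the computation from the first step applied at scale $X^{1/2}$ gives $|\lambda_0(X)-\lambda_0(X^{1/2})|^2 \cdot \tfrac18(\log X)^2(1+o(1)) = o(B_g(X)^2)$, hence $|\lambda_0(X)-\lambda_0(X^{1/2})| = o(B_g(X)/\log X) = o(|\lambda_0(X)|)$.

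Finally I would replicate the doubling argument from Lemma~\ref{lem:largeLambda}. For fixed $\e > 0$ pick $X_0 = X_0(\e)$ so that $|\lambda_0(Y^{1/2})/\lambda_0(Y)-1| < \e$ for all $Y \ge X_0^2$, set $X_k := X_0^{2^k}$, and telescope to obtain
$$(1-\e)^k \le \frac{|\lambda_0(X_0)|}{|\lambda_0(X_k)|} \le (1+\e)^k.$$
As $k \le (\log\log X_k)/\log 2$, this forces $|\lambda_0(X_k)| = (\log X_k)^{O(\e)}$, and Step~1 then converts this into $B_g(X_k) = (\log X_k)^{1+O(\e)}$. The monotonicity interpolation that closes the proof of Lemma~\ref{lem:largeLambda} extends the bound from the sparse sequence $\{X_k\}$ to all large $X$, and the result follows as $\e$ is arbitrary. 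The only delicate step is the slow variation, which crucially needs the \emph{lower} bound $|\lambda_0(X)| \gg B_g(X)/\log X$ from Step~1 in order to reinterpret the additive error $o(B_g(X)/\log X)$ as a multiplicative error $o(1)$ suitable for iteration; without that lower bound the doubling argument would collapse.
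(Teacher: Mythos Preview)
Your proof is correct and takes a genuinely different route from the paper's. The paper establishes the lower bound $|\lambda_0(X)| \geq \tfrac12 B_g(X)/\log X$ in essentially the same way you do (via $(a+b)^2 \le 2(a^2+b^2)$ rather than Minkowski, but to the same effect). However, rather than proving slow variation of $\lambda_0$ directly, it then invokes the relation $\lambda(X) = \lambda_0(X) + o(B_g(X)/\log X)$ recorded in \eqref{eq:compLambLamb0}, where $\lambda$ is the parameter produced by Proposition~\ref{prop:AgLog}. Since that $\lambda$ already comes equipped with the slow-variation property $\lambda(X^u) = \lambda(X) + o(B_g(X)/\log X)$, Lemma~\ref{lem:largeLambda} applies as a black box and the conclusion follows.

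Your approach is more self-contained: by exploiting Minkowski a second time at the pair of scales $X, X^{1/2}$ you establish slow variation of $\lambda_0$ directly, and then reproduce the doubling iteration rather than citing Lemma~\ref{lem:largeLambda}. In particular your argument nowhere uses the standing hypothesis $|\mc{B}(X)| = o(X)$ that underpins Proposition~\ref{prop:AgLog} and \eqref{eq:compLambLamb0}; you prove the lemma purely from its stated hypotheses. The paper's route is shorter in context because the requisite machinery is already in place, but yours isolates the lemma as a stand-alone fact about additive functions.
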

\begin{proof}
By Cauchy-Schwarz, we have
$$
B_g(X)^2 = \sum_{p^k \leq X} \frac{|g(p^k)|^2}{p^k} \leq 2\left(\lambda_0(X)^2 \sum_{p^k \leq X} \frac{(\log p^k)^2}{p^k} + \sum_{p^k \leq X} \frac{|g_{\lambda_0}(p^k)|^2}{p^k}\right) = \lambda_0(X)^2 (\log X)^2 + o(B_g(X)^2).
$$
It follows that $|\lambda_0(X)| \geq \frac{1}{2}B_g(X)/\log X$ when $X$ is sufficiently large. The conclusion follows from Lemma \ref{lem:largeLambda}, provided we can show that $\lambda(X) = \lambda_0(X) + o(B_g(X)/\log X)$ for all large $X$, where $\lambda(X)$ is the function from the conclusion of Proposition \ref{prop:AgLog}. 
But this was verified in \eqref{eq:compLambLamb0},
%by Cauchy-Schwarz, Lemma \ref{lem:AlphaMom} and Lemma \ref{lem:SmallMoments} (with $\alpha = 3/2$, say), we have
%\begin{align*}
%|\lambda(X)-\lambda_0(X)|\log X &\ll \sum_{X^{1/2} < p \leq X} \frac{|g(p)-\lambda(X) \log p|}{p} + \sum_{X^{1/2} < p \leq X} \frac{|g(p)-\lambda_0(X) \log p|}{p} \\
%&\leq o(B_g(X)) + B_{g_{\lambda_0}}(X) \left(\sum_{X^{1/2} < p \leq X} \frac 1p\right)^{1/2} = o(B_g(X)),
%\end{align*}
so the claim follows.
\end{proof}

\begin{proof}[Proof of Corollary \ref{cor:weakErd}]
Suppose $g: \mb{N} \ra \mb{R}$ is a completely additive function that satisfies 
$$
F_g(\e) \ra 0 \text{ as } \e \ra 0^+, \text{ and } |\mc{B}(X)| \leq \frac{X}{(\log X)^{2+\eta}}, \text{ for some } \eta> 0.
$$
Suppose first that $B_g(X) \ra \infty$, so that $g \in \mc{A}_s$. By Theorem \ref{thm:iterStep} there is a parameter $\lambda_0(X)$ with $|\lambda_0(X)| \ll B_g(X)/\log X$, such that $B_{g_{\lambda_0}}(X) = o(B_g(X))$, as $X \ra \infty$. By Lemma \ref{lem:smallBgLambda}, we deduce that $B_g(X) \ll_{\e} (\log X)^{1+\e}$. Now, applying \eqref{eq:suffGaps}, we obtain
\begin{align*}
\frac{1}{X}\sum_{n \leq X} |g(n)-g(n-1)| &\ll B_g(X) \left(\left(\frac{|\mc{B}(X)|}{X}\right)^{\frac 12} + \frac{\log X}{\sqrt{X}}\right) \ll (\log X)^{1+\frac{\eta}{3}} \cdot (\log X)^{-\frac{1}{2}(2+\eta)} \ll (\log X)^{-\frac{\eta}{6}}.
\end{align*}
By Theorem \ref{thm:KatWir}, we deduce that there is a constant $c\in \mb{R}$ such that $g(n) = c\log n$ for all $n$, as required.\\
If, instead, $B_g(X) \ll 1$ then we again deduce (even if $g \notin \mc{A}_s$) from \eqref{eq:suffGaps} that
$$
\frac{1}{X}\sum_{n \leq X} |g(n)-g(n-1)| = o(B_g(X)) = o(1),
$$
and so the claim follows (necessarily with $c = 0$) by Theorem \ref{thm:KatWir}.
%Indeed, in this case, we deduce that there is a $c \in \mb{R}$ such that $g(n) = c\log n$ for all $n$. In that case, we deduce that
%$$
%(c+o(1))\log X = \frac{1}{X}\sum_{n \leq X} g(n) = A_g(X) = (\lambda_0 + o(1)) \log X,
%$$
%and hence $c = \lambda_0$. \\
%Hence, we deduce that
%\begin{align*}
%\frac{1}{\log X} \sum_{n \leq X} \frac{1}{n}|g(n)-g(n-1)| &\ll (\log X)^{-1/2} + \frac{1}{\log X}\sum_{\sqrt{\log X} < k \leq \log X} \frac{1}{2^k} \sum_{2^{k-1} < n \leq 2^k} |g(n)-g(n-1)| \\
%&\ll (\log X)^{-1/2} + \frac{1}{\log X} \sum_{\sqrt{\log X} < k \leq \log X} k \left(\frac{|\mc{B}(2^k)|}{2^k}\right)^{1/2} \ll (\log X)^{-1/2},
%\end{align*}
%since $|\mc{B}(2^k)| \ll 2^k/k^3$, say, by assumption. The proof is thus complete.
\end{proof}

\begin{comment}
\begin{rem}
According to Hildebrand's moment estimates, the function $\lambda(X)$ is independent of the parameter $\alpha \in (1,2)$ chosen in the above proof. As such, with the same $\lambda$ (and a function $c = c(g,x)$) we obtain
$$
\frac{1}{X}\sum_{n \leq X} |g(n)-A_g(X)|^2 \asymp |\lambda|^2 + \left|\sum_{\substack{p \leq X \\ |g_{\lambda}(p)| > c}} \frac{g_{\lambda}(p)}{p}\right|^2 + \sum_{p \leq X} \frac{|g_{\lambda}(p)|^2}{p} \gg |\lambda|^2 + \sum_{p \leq X} \frac{|g_{\lambda}(p)|^2}{p}.
$$
At the same time, Ruzsa's result shows that
$$
\frac{1}{X}\sum_{n \leq X} |g(n)-A_g(X)|^2 \asymp \min_{\lambda' \in \mb{R}} \left( |\lambda'|^2 + \sum_{p \leq X}\frac{|g_{\lambda'}(p)|^2}{p}\right) \asymp |\lambda_0|^2 + \sum_{p \leq X} \frac{|g_{\lambda_0}(p)|^2}{p},
$$
where we define
$$
\lambda_0 := \frac{2}{(\log X)^2} \sum_{p \leq X} \frac{g(p)\log p}{p}.
$$
We thus deduce that
$$
|\lambda|^2 + \sum_{p \leq X} \frac{|g_{\lambda}(p)|^2}{p} \asymp |\lambda_0|^2 + \sum_{p \leq X} \frac{|g_{\lambda_0}(p)|^2}{p}.
$$

\end{rem}
\end{comment}

\subsection{Proof of Theorem \ref{thm:almErd}}
In \cite{EllFA}, Elliott shows the following.
\begin{thm1}[\cite{EllFA}, Thm. 6] \label{thm:ThmEllFA}
Let $0 < a < b \leq 1$. Let $g: \mb{N} \ra \mb{C}$ be an additive function, and for $y \geq 10$ define
$$
\theta(y) := \sum_{y^{a} < p^k \leq y^{b}} \frac{1}{p^k} |g(p^k)-A_g(y) + A_g(y/p^k)|.
$$
Then for all $\e, B > 0$ there exist $X_0 = X_0(a,b,\e,B)$ and $c > 0$ such that if $X \geq X_0$ then, uniformly over $X^{\e} < t \leq X$,
$$
A_g(t) = G(X) \log t - \eta(X) + O(Y(X)),
$$
where $G,\eta$ are measurable functions and
$$
Y(X) := \sup_{X^c < w \leq X} \theta(w) + (\log X)^{-B} \sum_{p^k \leq X} \frac{|g(p^k)|}{p^k} + \max_{X^c \leq p^k \leq X} |g(p^k)|p^{-k}.
$$
\end{thm1}

\begin{cor} \label{cor:EllFA}
Let $\delta \in (0,1/2)$. Suppose $g: \mb{N} \ra \mb{R}$ is an additive function such that $|\mc{B}(X)| = o(X)$. Then, uniformly over all $X^{\delta} \leq t \leq X$ we have
$$
A_g(t) = \lambda(X)\log t - \eta(X) + o(B_g(X)),
$$
where $\lambda(X)$ and $\eta(X)$ are measurable functions such that for each fixed $0 < u \leq 1$,
$$
\lambda(X^u) = \lambda(X) + o(B_g(X)/\log X), \quad \quad \eta(X^u) = \eta(X) + o(B_g(X)).
$$
\end{cor}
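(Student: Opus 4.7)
The plan is to deduce Corollary \ref{cor:EllFA} as an essentially immediate consequence of Elliott's Theorem stated just above (Thm.\ 6 of \cite{EllFA}), once we have verified that under our hypotheses the error term $Y(X)$ is $o(B_g(X))$ and that the implicit coefficient $G(X)$ provided by Elliott may be replaced by the $\lambda(X)$ from Proposition \ref{prop:AgLog}.

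First I would apply Elliott's theorem with, say, $a = \delta/2$, $b = 1$, $\epsilon = \delta$ and $B = 1$; this gives $A_g(t) = G(X)\log t - \eta(X) + O(Y(X))$ uniformly for $X^\delta < t \leq X$. The three pieces of $Y(X)$ are then handled as follows. For the $\theta(w)$ term, uniformly for $X^c < w \leq X$ we invoke Lemma \ref{lem:EllLS} (with the parameter $a$ playing the role of $\delta$) followed by Lemma \ref{lem:SmallMoments} at $\alpha = 3/2$, which under the standing hypothesis $|\mc{B}(X)| = o(X)$ (hence $|\mc{B}(w)|/w \to 0$) yields
\[
\theta(w) \ll_a \bigl(\log(2/a)\bigr)^{1/2}\cdot o(B_g(w)) + \frac{B_g(w)}{(\log w)^{1/4}} = o(B_g(X)),
\]
using $B_g(w) \leq B_g(X)$. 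For the sum $(\log X)^{-1}\sum_{p^k \leq X}|g(p^k)|/p^k$, Cauchy--Schwarz and Mertens give $\ll (\log X)^{-1}B_g(X)\sqrt{\log\log X} = o(B_g(X))$. Finally, the pointwise bound $|g(p^k)|/p^k \leq B_g(X)p^{-k/2}$ makes $\max_{X^c \leq p^k \leq X}|g(p^k)|p^{-k} \ll B_g(X)X^{-c/2} = o(B_g(X))$. Consequently $Y(X) = o(B_g(X))$, and therefore $A_g(t) = G(X)\log t - \eta(X) + o(B_g(X))$ on the range $X^\delta < t \leq X$.

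Next I would identify $G(X)$ with $\lambda(X)$ up to acceptable error. Taking $t_1 = X^\delta$ and $t_2 = X$ in Proposition \ref{prop:AgLog}(ii) and comparing with the difference $A_g(X) - A_g(X^\delta) = G(X)(1-\delta)\log X + o(B_g(X))$ from Elliott's formula, we obtain
\[
(1-\delta)(G(X) - \lambda(X))\log X = o(B_g(X)),
\]
so $G(X) = \lambda(X) + o(B_g(X)/\log X)$. Substituting back and absorbing the resulting error of size $o(B_g(X))$ into the remainder proves the main display of the corollary with $\lambda(X)$ in place of $G(X)$; the slow variation $\lambda(X^u) = \lambda(X) + o(B_g(X)/\log X)$ for fixed $u \in (0,1]$ is then exactly Proposition \ref{prop:AgLog}(iii).

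For the slow variation of $\eta$, I would evaluate the formula $A_g(t) = \lambda(X)\log t - \eta(X) + o(B_g(X))$ and its analogue at scale $X^u$ (valid for $X^{u\delta} < t \leq X^u$) at the common point $t = X^u$; subtracting,
\[
0 = u(\lambda(X) - \lambda(X^u))\log X + (\eta(X^u) - \eta(X)) + o(B_g(X)),
\]
and the slow variation of $\lambda$ forces $u\log X \cdot (\lambda(X) - \lambda(X^u)) = o(B_g(X))$, hence $\eta(X^u) = \eta(X) + o(B_g(X))$. The main obstacle is step two, namely uniformly controlling $\theta(w)$; this relies on checking that Lemma \ref{lem:SmallMoments}, whose hypothesis is the density statement $|\mc{B}(X)|=o(X)$, applies at every scale $w \in (X^c, X]$, which is immediate since $|\mc{B}(\cdot)|/(\cdot) \to 0$ as the argument tends to infinity. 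Everything else is bookkeeping with the tools already assembled.
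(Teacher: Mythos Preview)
Your proposal is correct and follows essentially the same route as the paper: apply Elliott's Theorem~6, bound the three pieces of $Y(X)$ via Lemma~\ref{lem:EllLS} combined with Lemma~\ref{lem:SmallMoments}, Cauchy--Schwarz, and the pointwise bound $|g(p^k)|p^{-k/2}\le B_g(X)$, and then derive the slow variation of $\eta$ by evaluating the resulting formula at $t=X^u$ at both scales. The only cosmetic difference is that the paper simply \emph{relabels} Elliott's $G(X)$ as $\lambda(X)$ and re-derives its slow variation by repeating the argument of Proposition~\ref{prop:AgLog}(iii), whereas you keep Elliott's $G$ and the $\lambda$ from Proposition~\ref{prop:AgLog} separate, show $G(X)=\lambda(X)+o(B_g(X)/\log X)$ by comparing $A_g(X)-A_g(X^\delta)$ in both formulas, and then invoke Proposition~\ref{prop:AgLog}(iii) directly; this is a harmless and arguably tidier variant.
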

\begin{proof}
By combining Lemmas \ref{lem:EllLS} and \ref{lem:SmallMoments}, we have
$$
\sum_{X^{\delta} \leq p^k \leq X} \frac{|g(p^k)-A_g(X) + A_g(X/p^k)|}{p^k} = o(B_g(X)),
$$
for any fixed $\delta > 0$. Applying Elliott's theorem with $a = \e = \delta$, $b = 1$, $B = 1$, we have, uniformly over all $X^c < y \leq X$ 
$$
Y(X) = o(B_g(X)) + O\left(\frac{\sqrt{\log\log X}}{\log X} B_g(X) + B_g(X)X^{-c/2}\right)  = o(B_g(X)),
$$
using the bound $|g(p^k)|p^{-k/2} \leq B_g(X)$ for all $p^k \leq X$. We thus deduce that (relabeling $G$ as $\lambda$)
\begin{equation}\label{eq:AgExpHere}
A_g(t) = \lambda(X) \log t - \eta(X) + o(B_g(X)).
\end{equation}
We may observe, analogously as in the proof of Proposition \ref{prop:AgLog} that $\lambda(X^u) = \lambda(X) + o(B_g(X)/\log X)$ for fixed $u \in (\delta,1)$. Furthermore, evaluating $A_g(X^u)$ in \eqref{eq:AgExpHere}, once as written and once with $X$ replaced by $X^u$, we obtain that
$$
A_g(X^u) = u\lambda(X^u) \log X -\eta(X^u) + o(B_g(X^u)) = u\lambda(X) \log X - \eta(X) + o(B_g(X)),
$$
from which it also follows, using the slow variation of $\lambda$, that
$$
\eta(X^u) = \eta(X) + u(\lambda(X^u) - \lambda(X)) \log X + o(B_g(X)) = \eta(X) + o(B_g(X))
$$
for each fixed $\delta \leq u \leq 1$, as required.
\end{proof}

\begin{proof}[Proof of Theorem \ref{thm:almErd}]
%Assume that $|\mc{B}(X)| = o(X/(\log\log X)^2)$. Then for all but $o(X)$ integers $n \leq X$ we have
%$$
%g(n) = \lambda(X) \log n + o(B_g(X)).
%$$
%Unconditionally, we have $g(n) = \lambda(X) \log n + \eta(X) + o(B_g(X))$ for almost all $n \leq X$.
%\end{cor}
%\begin{proof}
By Lemma \ref{lem:SmallMoments} (with $\alpha = 1$),
$$
\frac{1}{X}\sum_{n \leq X} |g(n)-A_g(X)| = o(B_g(X))
$$
so that for all but $o(X)$ integers $n \leq X$ we have
\begin{equation}\label{eq:conc}
g(n) = A_g(X)+ o(B_g(X)).
\end{equation}
By Corollary \ref{cor:EllFA} and Proposition \ref{prop:AgLog}(i), we deduce that
$$
g(n) = \lambda(X) \log X - \eta(X) + o(B_g(X)) = \lambda(X) \log n - \eta(X) + o(B_g(X))
$$
for all but $o(X)$ integers $X/\log X < n \leq X$, and thus for all but $o(X)$ integers $n \leq X$, proving the claim of Theorem \ref{thm:almErd}.
%To prove the second, consider now that $|\mc{B}(X)| = o(X/(\log\log X)^2)$. Then, in place of Corollary \ref{cor:EllFA} we may apply Theorem \ref{thm:structg} in \eqref{eq:conc} to deduce that
%$$
%g(n) = \lambda(X) \log X + o(B_g(X)) = \lambda(X) \log n + o(B_g(X))
%$$
%for all but $o(X)$ integers $n \leq X$. 
%Finally, assume instead that $\max_{p \leq X} |g(p)| \ll B_g(X)$, and that $g$ is completely multiplicative. In this case, there is a constant $C > 0$ such that if $0 < \e < 1/C$ then
%$$
%F_g(\e) = \limsup_{X \ra \infty} \frac{1}{B_g(X)^2} \sum_{\ss{p \leq X \\ |g(p)| > \e^{-1} B_g(X)}} \frac{g(p)^2}{p} = 0.
%$$
%By Theorem \ref{thm:iterStep}, we deduce that there is a $\lambda(X)$ such that
%$$
%\sum_{p \leq X} \frac{|g(p)-\lambda(X) \log p|^2}{p} = o(B_g(X)^2).
%$$
%It follows that 
%
%Furthermore, all but $o(X)$ of these $n$ belong to the interval $X/\log X  < n \leq X$, and for these we have $\lambda(X) \log(X/n) \ll B_g(X)\log\log X/\log X$. Hence,
%$$
%g(n) = \lambda(X) \log n + o(B_g(X))
%$$
%for all but $o(X)$ integers $n \leq X$, as required. \\
%The proof of the second claim is the same, using Corollary \ref{cor:EllFA} in place of Theorem \ref{thm:structg}.
%The condition $A_g(X) = O(B_g(X))$ is now immediate from $\lambda(X) \ll B_g(X)/\log X$.
\end{proof}

\section*{Acknowledgments}
The author warmly thanks 
%Andrew Granville, 
Oleksiy Klurman 
and Aled Walker 
for helpful suggestions about improving the exposition of the paper, as well as for their encouragement. Most of this paper was written while the author held a Junior Fellowship at the Mittag-Leffler institute for mathematical research during the Winter of 2021. He would like to thank the institute for its support. 

\bibliography{ErdConj}
\bibliographystyle{plain}
\end{document}